\def\N{\mathbb N}
\def\R{\mathbb R}
\def\e{\varepsilon}
\newtheorem{theorem}{Theorem}[section]
\newtheorem{proposition}[theorem]{Proposition}
\newtheorem{lemma}[theorem]{Lemma}
\newtheorem{corollary}[theorem]{Corollary}
\numberwithin{equation}{section}
\numberwithin{figure}{section}
\begin{document}

\title{$C^\infty$ regularity in semilinear free boundary problems}

\author{Daniel Restrepo}

\address{Department of Mathematics, Johns Hopkins University,  3400 N. Charles Street, Baltimore, MD 21218}
\email{drestre1@jh.edu}

\author{Xavier Ros-Oton}

\address{ICREA, Pg. Llu\'is Companys 23, 08010 Barcelona, Spain \& Universitat de Barcelona, Departament de Matem\`atiques i Inform\`atica, Gran Via de les Corts Catalanes 585, 08007 Barcelona, Spain \& Centre de Recerca Matem\`atica, Barcelona, Spain.}
\email{xros@icrea.cat}

\keywords{Alt-Phillips, free boundary, higher regularity, Hardy potential, boundary regularity}

%\thanks{...}

\subjclass[2010]{35R35, 35B65, 35J61, 35J25}

\allowdisplaybreaks

\begin{abstract}
We study the higher regularity of solutions and free boundaries in the Alt-Phillips problem $\Delta u=u^{\gamma-1}$, with $\gamma\in(0,1)$.
Our main results imply that, once free boundaries are $C^{1,\alpha}$, then they are $C^\infty$. In addition $u/d^{\frac{2}{2-\gamma}}$ and $u^{\frac{2-\gamma}{2}}$ are $C^\infty$ too.\\

In order to achieve this, we need to establish fine  regularity estimates for solutions of linear equations with boundary-singular Hardy potentials $-\Delta v = \kappa v/d^2$ in $\Omega$, where $d$ is the distance to the boundary and $\kappa\leq\frac14$.
Interestingly, we need to include even the critical constant $\kappa=\frac14$, which corresponds to $\gamma=\frac23$.
\end{abstract}

\maketitle

\section{Introduction}

A classical problem in the Calculus of Variations concerns the study of minimizers of functionals of the type 
\[
J(u) := \int_{D} {\textstyle\frac{1}{2}}|\nabla u|^2 +F(u).
\]
with boundary data $u=g\geq 0$ on $\partial D$.
The potential $F$ is usually asked to be nondecreasing, and with $F(0)=0$.
Under these conditions, minimizers of $J$ are non-negative and satisfy (in an appropriate sense) the Euler-Lagrange equation
\begin{equation}\label{eq ELgeneral}
\Delta u = F'(u)\quad \mbox{in}\quad D.
\end{equation}
Two of the most famous and important cases are $F(u)=u_+$ (the obstacle problem), and $F(u)=\chi_{\{u>0\}}$ (the Alt--Caffarelli or Bernoulli free boundary problem).
In general, when $F\notin C^{1,1}$, the solution will develop free boundaries: there will be a region of $D$ in which $u=0$, and a region in which $u>0$.
The so-called free boundary is the interface $\partial\{u>0\}$.

In this paper we are interested in the more general family of problems 
\[F(u)={\textstyle\frac{1}{\gamma}}u^\gamma \qquad \mbox{with}\qquad \gamma\in(0,2),\]
which gives rise to the so-called Alt--Phillips problem, introduced in \cite{AP86}.
This family of problems obviously encompasses both the obstacle problem ($\gamma=1$), as well as the Alt--Caffarelli problem ($\gamma\to0$).

The regularity theory for both the obstacle and the Alt--Caffarelli problems is quite well understood, and in particular the free boundary is known to be $C^\infty$ outside a closed set of singular points \cite{PSU12, Vel23,FR22,CS05}.
Moreover, fine properties of the singular set have been established in \cite{FRS20,JS15,EE19,FZ21}.

The regularity theory for minimizers in the Alt--Phillips problem has been developed in the works \cite{AP86,Bon01,FY23,WY22,DS21}, and the main known results may be summarized as follows:
\begin{enumerate}
	\item Solutions $u$ are $C^{\frac{2}{2-\gamma}}$, and this is optimal.\footnote{Throughout the paper, we denote $C^\beta=C^{k,\alpha}$ with $k+\alpha=\beta$ and $\alpha\in(0,1]$, $k\in \mathbb N$.}
	\item Free boundary points can be split into two sets: regular points, and singular ponts.
	\item The regular set is an open subset of the free boundary, and it is $C^{1,\alpha}$.
	\item The singular set is closed, and:
	
	- When $\gamma\in(0,1)$ its Hausdorff dimension is at most $n-3$.
	
	- When $\gamma\in[1,2)$ it is $(n-1)$-rectifiable.
\end{enumerate}

Our main goal in this paper is to prove that the set of regular points is actually $C^\infty$, and that solutions enjoy a $C^\infty$ regularity estimate near regular points, namely $u/d^{\frac{2}{2-\gamma}}\in C^\infty$.

This has been only proved for $\gamma\in(1,2)$ by Fotouhi--Koch \cite{FK23} and very recently for $\gamma\in(\frac{2}{3},2)$ by Allen-Kriventsov-Shahgholian \cite{AKS}, and has been open for a very long time in the case $\gamma\in(0,\frac{2}{3}]$.
Our proof is completely different from that in \cite{FK23} and \cite{AKS}, and covers the whole range of exponents $\gamma\in(0,2)$.

As we will see, quite interestingly, we find that in terms of higher order regularity the problem behaves like the obstacle problem for $\gamma\geq\frac23$, while it is more similar to the Bernoulli problem for $\gamma<\frac23$.
This is quite new and surprising, since in almost all previous regularity results the only differences were in cases $\gamma\in (1,2)$ vs $\gamma\in(0,1)$.

\subsection{Main result}

Our main result can be stated as follows.
Recall that regular free boundary points are those at which the free boundary $\partial\{u>0\}$ is $C^{1,\alpha}$ in a neighborhood.

\begin{theorem}\label{thm 1}
Let $\gamma \in (0,2)$ and $0\leq u \in C^{\frac{2}{2-\gamma}}$ be any viscosity solution of
	\begin{equation}\label{eq alt-phil}
			\Delta u = u^{\gamma-1} \quad \textrm{in}\quad \{u>0\}\cap B_1.
	\end{equation}
Assume $x_0 \in \partial \{u>0\}\cap B_1$ is a regular free boundary point.
Then, 
\begin{itemize}
\item The free boundary $\partial \{u>0\}$ is $C^\infty$ in a neighborhood of $x_0$.

\item The functions $u/d^{\frac{2}{2-\gamma}}$ and  $u^{\frac{2-\gamma}{2}}$ are $C^\infty$ in a neighborhood of $x_0$.
\end{itemize}
\end{theorem}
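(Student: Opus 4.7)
\medskip
\noindent\textbf{Proof plan.} The plan is to reduce all three conclusions to a single boundary regularity statement for a degenerate semilinear PDE on a fixed half-space, whose linearisation is a Hardy-type operator $-\Delta + \kappa/y_n^2$ with $\kappa := (\beta-1)(2-\beta) \leq \tfrac14$, where $\beta := \tfrac{2}{2-\gamma}$. The bulk of the work is a Schauder bootstrap driven by the fine estimates for Hardy potentials announced in the abstract.

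First, flatten the free boundary: since $x_0$ is regular, $\partial\{u>0\}$ is locally a $C^{1,\alpha}$ graph $\{x_n = \phi(x')\}$, and the partial hodograph $y=(x', x_n-\phi(x'))$ straightens it to $\{y_n=0\}$, turning the Laplacian into a uniformly elliptic operator $L_\phi$ with coefficients as regular as $\nabla\phi$. Setting $W := u/y_n^\beta$, a direct computation using the identity $(\gamma-1)\beta = \beta-2$ shows that $W$ is bounded and strictly positive up to the boundary with $W(y',0) = c_0$, where $c_0^{2-\gamma} = 1/(\beta(\beta-1))$, and that $W$ solves
\[
y_n^2 L_\phi W + 2\beta\, y_n\, \partial_n W + \beta(\beta-1)\, W \;=\; W^{\gamma-1} \qquad \text{in } \{y_n>0\}.
\]
Linearising around $W_0 \equiv c_0$ (using $c_0^{\gamma-2} = \beta(\beta-1)$ and $\beta(2-\gamma) = 2$) and conjugating via $h = y_n^{-\beta} g$ reduces the linearisation to $-\Delta g = (\beta-1)(2-\beta)\, y_n^{-2}\, g$. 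The coefficient $\kappa=(\beta-1)(2-\beta)$ is maximised at $\beta=\tfrac32$, i.e.\ $\gamma=\tfrac23$, where $\kappa=\tfrac14$ is critical.

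The bootstrap is then structural. Starting from the $C^{0,\alpha}$-regularity of $W$ up to the boundary (a consequence of the $C^{1,\alpha}$ free-boundary hypothesis), assume inductively $\phi \in C^{k,\alpha}$ and $W \in C^{k-1,\alpha}$ up to $\{y_n=0\}$. Expanding $W$ as a polynomial Ansatz in $y_n$ with $y'$-dependent coefficients and matching powers in the semilinear equation identifies the Taylor coefficients order by order, leaving a remainder that satisfies a linear Hardy-type equation with the same $\kappa$ plus a $y_n^{2k}$-degenerate perturbation. The fine Schauder theory for $-\Delta v = \kappa v/d^2$ with $\kappa \leq \tfrac14$, developed in the paper, upgrades the remainder to $C^{k,\alpha}$; in parallel, the Neumann-type free-boundary condition $|\nabla(u^{1/\beta})|^2 = 1/(\beta(\beta-1))$ on $\partial\{u>0\}$, obtained from the PDE $\beta v \Delta v + \beta(\beta-1)|\nabla v|^2 = 1$ for $v := u^{1/\beta}$, promotes $\phi$ to $C^{k+1,\alpha}$. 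This closes the induction and yields $W, \phi \in C^\infty$.

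Translating back, $W\in C^\infty$ up to $\partial\{u>0\}$ is exactly $u/d^{2/(2-\gamma)}\in C^\infty$; and $u^{(2-\gamma)/2} = d\cdot W^{1/\beta}$ with $W^{1/\beta}>0$ is then $C^\infty$ with non-vanishing gradient on its zero set, so the implicit function theorem delivers the $C^\infty$ regularity of the free boundary. The main obstacle is the critical case $\gamma=\tfrac23$: then $\kappa=\tfrac14$ and the two indicial roots of the Hardy operator collapse to $\tfrac12$, producing resonant logarithmic terms in the natural boundary expansions. The standard Schauder theory for $\kappa<\tfrac14$ does not pass to this limit, and the linear estimates must be reworked in weighted spaces that capture the logarithmic contribution---this is precisely the ``fine regularity estimates for boundary-singular Hardy potentials with $\kappa \leq \tfrac14$'' promised in the abstract, and is the technical heart of the argument.
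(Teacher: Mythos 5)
Your route---flattening by the graph map $y=(x',x_n-\phi(x'))$, setting $W=u/y_n^{\beta}$, linearising about the one-dimensional profile, and bootstrapping with the Hardy--Schauder theory---is genuinely different from the paper's, but as written it has a real gap at the step where the free boundary is supposed to gain a derivative. After your flattening, the coefficients of $L_\phi$ and $W$ itself live in coordinates whose regularity is capped by that of $\phi$; from $\phi\in C^{k,\alpha}$ and $W\in C^{k,\alpha}$ you can only conclude $v=u^{1/\beta}\in C^{k,\alpha}$ in the original variables, and the implicit function theorem then returns $\phi\in C^{k,\alpha}$, i.e.\ no gain. The mechanism you invoke to break this circle, the condition $|\nabla(u^{1/\beta})|^2=1/(\beta(\beta-1))$ on the free boundary, cannot do the job: on $\{y_n=0\}$ one computes $|\nabla_x v|^2=W^{2/\beta}(y',0)\,(1+|\nabla\phi(y')|^2)$, and in these vertical-distance variables $W(y',0)=c_\gamma(1+|\nabla\phi(y')|^2)^{-\beta/2}$ (so your claim $W(y',0)\equiv c_0$ is itself incorrect), whence the ``Neumann condition'' is an identity: it determines at most the scalar $|\nabla\phi|$ from the trace of $W$, never the full vector $\nabla\phi$, and carries no independent information. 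This is precisely the difficulty the paper resolves by a different device: the quotients $w=u_i/u_n$ solve ${\rm div}(u_n^2\nabla w)=0$ with a weighted Neumann condition, and the higher-order boundary Harnack for degenerate equations of \cite{TTV22} makes $w$ one order smoother than $\partial\Omega$; since the unit normal is built from the vector of quotients $u_i/u_n$, this upgrades $\nu$, hence $\partial\Omega$, by one order. Some substitute for this step is indispensable in your scheme and is missing.

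There is a second, quieter gap in the induction ``assume $W\in C^{k-1,\alpha}$, the fine Schauder theory upgrades the remainder to $C^{k,\alpha}$.'' If you linearise and feed the nonlinear error into Theorem \ref{thm schauder} with a right-hand side that is only $C^{k-1,\alpha}$, the theorem returns $C^{k-1,\alpha}$---no gain; the paper points this out explicitly. The gain comes from a structural observation your polynomial Ansatz does not capture: writing $v=u-c_\gamma d^{\frac{2}{2-\gamma}}$, the nonlinear error has the form $f=\big(u/d^{\frac{2}{2-\gamma}}-c_\gamma\big)^2\cdot(\textrm{smooth factor})$, and the square of a function vanishing on $\partial\Omega$ which is pointwise $C^{m,\alpha}$ there ($m\geq1$) is pointwise $C^{m+1,\alpha}$; this one-derivative gain (Step 4 of Theorem \ref{thm higher reg solutions}), combined with the dichotomy $\frac{2}{2-\gamma}=\theta_{\pm}+1$ according to $\gamma\geq\frac23$ or $\gamma<\frac23$ handled in Corollary \ref{corollary schauder} (a case your plan does not address: for $\gamma<\frac23$ the relevant exponent is the ``Neumann-type'' root $\theta_-+1$), is what actually moves the induction forward. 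Your identification of the Hardy constant and of $\gamma=\frac23$ as the resonant case is correct and matches the paper, but the two mechanisms above are the engine of the proof and must be supplied.
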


In addition to its own intrinsic interest, this result is important for example in the study of stable solutions to the Alt--Phillips problem (in which one needs at least second order expansions for solutions near $\partial\Omega$), especially when $\gamma < \frac23$; see \cite{KS24}.

Our approach to prove Theorem \ref{thm 1} is to find a PDE for the derivatives $u_i$ (and for their quotients $u_i/u_j$), to then use these equations to deduce that solutions and free boundaries are smooth.
This approach was introduced by De Silva and Savin in the context of the obstacle problem \cite{DS15}, and also in the Alt--Caffarelli problem; see \cite{DS15b}.

In the obstacle problem, they proved that if $v_2>0$ in $\Omega\cap B_1$, then
\[\left. \begin{array}{r}
\Delta v_1=\Delta v_2=0  \quad \textrm{in}\quad\Omega\cap B_1\\
v_1=v_2=0  \quad \textrm{on}\quad\partial\Omega\cap B_1\\
\partial\Omega  \quad \textrm{is}\quad C^{k,\alpha}\\
\end{array}\right\} \qquad \Longrightarrow \qquad 
\frac{v_1}{v_2} \in C^{k,\alpha}(\overline\Omega \cap B_{1/2}),\]
for any $k\geq1$ and $\alpha\in(0,1)$.
This, applied to the derivatives $u_i$, and using that the normal vector $\nu$ to $\partial\Omega$ can be written as
\[\nu^i = \frac{u_i}{|\nabla u|} = \frac{u_i/u_n}{\sqrt{\sum_j (u_j/u_n)^2}}\]
yields that
\begin{equation}\label{strategy-obstacle}
\partial\Omega \in C^{k,\alpha} \quad\Longrightarrow\quad  \frac{u_i}{u_n}\in C^{k,\alpha} \quad\Longrightarrow \quad\nu \in C^{k,\alpha} \quad\Longrightarrow \quad\partial\Omega \in C^{k+1,\alpha},\end{equation}
thus yielding the $C^\infty$ regularity by a bootstrap argument.

In case of the Alt-Caffarelli free boundary problem, the argument goes in the same spirit, but the PDE is completely different.
It turns out that the quotient $w:=u_i/u_n$ solves
\[\begin{split}
{\rm div}(u_n^2 \nabla w)=0 & \quad \textrm{in}\quad \Omega \cap B_1 \\
\partial_\nu w =0 &\quad \textrm{on}\quad \partial\Omega\cap B_1, 
\end{split}\]
and thus by Schauder estimates we get
\begin{equation}\label{strategy-bernoulli}
\partial\Omega \in C^{k,\alpha} \quad\Longrightarrow\quad  u_n\in C^{k-1,\alpha} \quad\Longrightarrow\quad  w\in C^{k,\alpha} \quad\Longrightarrow \quad\nu \in C^{k,\alpha} \quad\Longrightarrow \quad\partial\Omega \in C^{k+1,\alpha}.\end{equation}
This yields that $\partial\Omega$ is $C^\infty$, and thus applying now Schauder estimates (with Dirichlet conditions) to the solution $u$, we find that $u\in C^\infty$.

\subsection{The linearized equation}

In our setting, derivatives $u_i$ are not harmonic but solve the linearized equation
\[\Delta u_i = (\gamma-1)u^{\gamma-2} u_i \quad \textrm{in}\quad \Omega\cap B_1.\]
At regular points, we know that 
\[u \sim c_\gamma d^{\frac{2}{2-\gamma}}\]
where $d$ is the distance to the (free) boundary and\footnote{Intuitively, this is because blow-ups of solutions at regular points converge to the function $c_\gamma (x\cdot\nu)_+^\frac{2}{2-\gamma}$, the one dimensional solution of the Alt-Phillips equation.} 
\begin{equation}\label{eq coefficient distance}
	c_\gamma := \left(\frac{(2-\gamma)^2}{2 \gamma}\right)^\frac{1}{2-\gamma};
\end{equation}
see \cite{AP86}.
Thus, since near regular points we also know that $u/d^{\frac{2}{2-\gamma}} \in C^\alpha$, then the linearized equation becomes
\begin{equation}\label{asdf}
\Delta u_i = \kappa_\gamma \frac{u_i}{d^2} + f d^{\frac{\gamma}{2-\gamma}-2} \quad \textrm{in}\quad \Omega\cap B_1,
\end{equation}
where $f|_{\partial\Omega}=0$, $f\in C^\alpha(\overline\Omega\cap B_1)$, and 
 \begin{equation}\label{eq aux constant} 
 	\kappa_\gamma := (\gamma-1)c_\gamma^{\gamma-2}= \Big(\frac{\gamma}{2-\gamma}\Big) \Big(\frac{\gamma}{2-\gamma}-1\Big).
 \end{equation}
Notice that 
\[\kappa_\gamma \geq -{\textstyle\frac14},\]
with the minimum being attained at $\gamma=\frac23$.
Moreover, we also have the symmetry property
\[\kappa_\gamma = \kappa_{\frac{4-4\gamma}{4-3\gamma}}.\]

Thus, we define the linearized operator
\begin{equation}\label{eq linearized alt-phil 0}
	L_\kappa(v):=-\Delta v + \kappa \,\frac{v}{d^2},
\end{equation}
Quite interestingly, this operator is the appearing in the Euler-Lagrange equation of the Hardy-type inequality
\[\int_\Omega |\nabla \xi|^2 + \kappa \int_\Omega \frac{\xi^2}{d^2} \geq0,\]
which holds in bounded smooth domains for $\kappa>-\frac14$, and \textit{does not} hold for $\kappa<-\frac14$.
In case $\kappa= -
\frac14$ the inequality is known to hold in convex domains.
While we will not use such variational characterization, this shows that the range $\kappa\geq -\frac14$ is very natural, and the case $\kappa=- \frac14$ is critical.

Our analysis will be based on non-variational arguments, and in particular we will establish a comparison principle for $L_\kappa$ as well as suitable barriers.

As we will see, there are two characteristic powers for the operator in dimension one:
\[ L_\kappa (x^\theta) = 0 \quad \textrm{for}\quad x>0 \qquad \Longleftrightarrow \qquad \theta=\frac{1\pm\sqrt{1+4\kappa}}{2}.\]
We call the largest root $\theta_+$, whereas we will denote by $\theta_-$ the smallest root.
The first one will behave somehow as the exponent of solutions with Dirichlet conditions, while the second one somehow corresponds to the exponent of solutions with  Neumann conditions.
The case $\kappa=- \frac14$ is slightly different, as there is only one exponent $x^{1/2}$, but then there is a second 1D solution, namely $x^{1/2}\log x$.

Our main result for the linearized operator $L_\kappa$, which we believe is of independent interest, reads as follows.

\begin{theorem}\label{thm schauder}
Let $\kappa\in[-\frac14,\infty)$, and let $\Omega \subset \R^n$ be a $C^{k+1,\alpha}$ domain, $k\geq 0$ and $\alpha\in(0,1)$.
Let $f\in C^{k,\alpha}(  \overline\Omega \cap B_1 )$ be such that $f|_{\partial\Omega}\equiv0$, and $d$ be the regularized distance to $\partial\Omega$, given by Lemma \ref{lemma generalized distance}.

Let $v \in C(\Omega \cap B_1)$ be any solution to 
\begin{equation}\label{eq boundary linearized pde}
-\Delta v + \kappa\,\frac{v}{d^2}=fd^{\theta_+-2}\quad \text{in}\quad  \Omega \cap  B_1 ,
\end{equation}
satisfying
\begin{equation}\label{bdry-cond}
\begin{array}{rl}
\displaystyle \lim_{x\to \partial\Omega} \frac{v}{d^{\theta_-}} = 0 & \quad \textrm{if}\quad \kappa>-{\textstyle \frac14} \vspace{1mm} \\
\displaystyle \lim_{x\to \partial\Omega} \frac{v}{d^{1/2} |\log d|} = 0 & \quad \textrm{if}\quad \kappa=-{\textstyle \frac14} .
\end{array}
\end{equation}
Then, $v\in C^{\theta_+}( B_{1/2}\cap \overline\Omega)$ and 
\[
\big\|v/{d^{\theta_+}}\big\|_{C^{k,\alpha}(\overline\Omega \cap B_{1/2})}\leq C\big(\|f\|_{C^{k,\alpha}(\overline\Omega\cap B_1)} + \|v\|_{L^\infty(\Omega \cap B_1)} \big),
\]
where $C>0$ depends only on $\Omega$, $k$, $\alpha$, $n$, and $\kappa$.
\end{theorem}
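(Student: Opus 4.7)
The plan is to prove the estimate in three stages: (i) a local reduction to a flat-boundary model, (ii) a base $C^{\theta_+}$ estimate obtained from barriers and the comparison principle for $L_\kappa$, and (iii) a bootstrap to $C^{k,\alpha}$ regularity of $v/d^{\theta_+}$ via a blow-up Liouville scheme. The algebraic identity driving everything is $\theta_+(\theta_+-1)=\kappa$: a direct calculation yields
\[L_\kappa(d^{\theta_+}) \;=\; \kappa(1-|\nabla d|^2)\,d^{\theta_+-2} \;-\; \theta_+(\Delta d)\,d^{\theta_+-1} \;=\; O(d^{\theta_+-1}),\]
so $d^{\theta_+}$ is an approximate kernel of $L_\kappa$, and modifications of the form $A\,d^{\theta_+}\pm B\,d^{\theta_++\delta}$ (with a suitable $\delta=\delta(\alpha)>0$) serve as super/subsolutions absorbing the RHS $f\,d^{\theta_+-2}$ in a boundary strip.

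In stage (i) I would flatten $\partial\Omega$ by a diffeomorphism, so that $\Omega\cap B_{1/2}$ becomes an upper half-ball, $d(x)\approx x_n$, and $L_\kappa$ equals $-\Delta+\kappa/x_n^2$ modulo smooth lower-order perturbations. In stage (ii), combining the barriers above with the comparison principle for $L_\kappa$ on the class of functions satisfying (\ref{bdry-cond}), I obtain the pointwise bound $|v(x)|\leq C\,d(x)^{\theta_+}$, with $C$ controlled by $\|f\|_{L^\infty}+\|v\|_{L^\infty}$. The boundary condition (\ref{bdry-cond}) is essential precisely here: it rules out the competing $d^{\theta_-}$-mode (or the logarithmic mode $d^{1/2}|\log d|$ in the critical case), ensuring that the Phragm\'en--Lindel\"of-type comparison actually applies to $v$ itself. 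Interior elliptic regularity then upgrades the $L^\infty$ bound to $v\in C^{\theta_+}(\overline\Omega\cap B_{1/2})$ with the correct norm.

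Stage (iii) proceeds by induction on $k$: at each boundary point $x_0$, I approximate $v$ by $d^{\theta_+}P_k$ for the best polynomial $P_k$ of degree $k$, aiming at the estimate $|v-d^{\theta_+}P_k|\leq C\,d^{\theta_+}|x-x_0|^{k+\alpha}$. A contradiction/compactness argument drives the induction: dyadic rescaling of the error produces a sequence of solutions whose limit $W$ is a global $L_\kappa$-harmonic function in $\{x_n>0\}$ that still satisfies (\ref{bdry-cond}) and has polynomial growth of order $\leq k+\alpha$. Separation of variables in the half-space—writing $W(x',x_n)=\sum_j x_n^{\theta_++j}q_j(x')$ with polynomials $q_j$, where the $\theta_-$-modes are killed by (\ref{bdry-cond})—forces $W$ to be $x_n^{\theta_+}$ times a polynomial of degree at most $k$, contradicting the optimality of $P_k$.

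The main obstacle will be the critical case $\kappa=-\frac14$, where $\theta_+=\theta_-=\frac12$ and the second homogeneous one-dimensional solution degenerates to $x^{1/2}\log x$. Both the barriers in stage (ii) and the separation-of-variables classification in stage (iii) must be adjusted to accommodate this generalized eigenspace, and the sharp vanishing condition $v/(d^{1/2}|\log d|)\to 0$ in (\ref{bdry-cond}) must be used at precisely the threshold to exclude the logarithmic mode. This is also where the assumption $\kappa\geq -\tfrac14$ becomes sharp, since for $\kappa<-\tfrac14$ the indicial roots would become complex and the whole scheme (barriers, Liouville classification) would break down.
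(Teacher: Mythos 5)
Your proposal follows essentially the same route as the paper: explicit barriers built from powers of a (regularized) distance plus a comparison principle ruling out the $d^{\theta_-}$, resp.\ $d^{1/2}|\log d|$, mode to get the growth bound $|v|\le C d^{\theta_+}$ (this is Lemma \ref{lem-comparison} and Lemma \ref{lemma basic-boundary regularity}); then a contradiction/blow-up scheme with polynomial approximations and a half-space Liouville classification (Lemma \ref{lemma general Liouville}, Theorems \ref{theorem initial reg linear} and \ref{thm higher reg linear}); and finally interior estimates to control the quotient $v/d^{\theta_+}$. Three points in your sketch are imprecise, though none is fatal for this particular theorem. First, the computation $L_\kappa(d^{\theta_+})=O(d^{\theta_+-1})$ is only valid for boundaries of class $C^2$ or better; for $k=0$ the regularized distance of Lemma \ref{lemma generalized distance} only gives $|\Delta d|\le Cd^{\alpha-1}$ and $\bigl|1-|\nabla d|^2\bigr|\le Cd^{\alpha}$, so the error is $O(d^{\theta_+-2+\alpha})$, and your barrier $A\,d^{\theta_+}\pm B\,d^{\theta_++\delta}$ absorbs it only if $\delta<\alpha$ and the neighborhood is small; likewise, after flattening a $C^{k+1,\alpha}$ boundary the perturbations of $-\Delta+\kappa/x_n^2$ are neither smooth nor harmless lower-order terms (the Hardy-potential error is of size $\sim d^{\alpha-2}$), and they must be carried along as admissible weighted right-hand sides --- the paper sidesteps flattening entirely by working with the regularized distance. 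Second, your Liouville classification using only $x_n^{\theta_+}$-modes is legitimate here because the right-hand side $f d^{\theta_+-2}$ has no $d^{\theta_--2}$ component; the paper's general version must also include the family $x_n^{\theta_-+1}$ (needed later for the nonlinear problem when $\gamma<\frac23$), and the separation-of-variables step is made rigorous via tangential incremental quotients rather than assumed. Third, the passage from pointwise expansions at boundary points to the stated $C^{k,\alpha}$ bound for $v/d^{\theta_+}$ on $\overline\Omega\cap B_{1/2}$ is a genuine step you gloss over: the compactness argument naturally yields $|v-Q_{x_0}d^{\theta_+}|\le C|x-x_0|^{\theta_++k+\alpha}$ (weaker than the $d^{\theta_+}|x-x_0|^{k+\alpha}$ bound you announce), and one must combine it with interior Schauder estimates rescaled to balls of radius comparable to $d$ (the paper's Lemma \ref{lemma quotient} together with the rescaling argument in the proof of Theorem \ref{thm schauder}) to conclude; make this gluing explicit.
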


Here, the assumption \eqref{bdry-cond} acts as a Dirichlet boundary condition on~$\partial\Omega$ ruling out the existence of solutions behaving like the other possible power. This is interesting because $v$ is not assumed to be bounded and since  $\theta_- <0$ when $\kappa >0$, \eqref{bdry-cond} imposes a growth bound nearby the boundary which, a posteriori, implies the boundedness of $v$.\\ 

We will also prove a modified version of the result allowing for general pointwise expansions for $v$ of the form $Q_1 d^{\theta_+}+ Q_2 d^{\theta_-+1}$ nearby $\partial\Omega$ with $Q_1$ and $Q_2$ polynomials of a suitable degree; see Theorem \ref{thm higher reg linear}. 

\subsection{Strategy of the proofs}

Recall that solutions $u$ to the Alt--Phillips problem behave like~$d^{\frac{2}{2-\gamma}}$, and thus their derivatives $u_i$ do so like $d^{\frac{\gamma}{2-\gamma}}$.
This means that we have two completely different cases: for $\gamma\geq\frac23$ the exponent $\frac{\gamma}{2-\gamma}$ corresponds to $\theta_+$ of the linearized operator~$L_\kappa$
\[\gamma\in[{\textstyle \frac23},2) \qquad \Longrightarrow \qquad u_i \asymp d^{\theta_+},
\]
while for $\gamma<\frac23$ the exponent $\frac{\gamma}{2-\gamma}$ corresponds to $\theta_-$ of  $L_\kappa$
\[\gamma\in(0,{\textstyle \frac23}) \qquad \Longrightarrow \qquad u_i \asymp d^{\theta_-}.
\]
In a sense, this means that \emph{when $\gamma\geq\frac23$ derivatives $u_i$ have zero Dirichlet boundary data}, while for $\gamma<\frac23$ this is not true anymore.

In both cases, the idea is to apply Theorem \ref{thm schauder}  inductively to show that $u_i/d^{\frac{\gamma}{2-\gamma}}$, $u/d^{\frac{2}{2-\gamma}}$, and  $u_i/u_n$ are $C^\infty$, namely,
\begin{equation}\label{strategy-proof}
\partial\Omega \in C^{k+1,\alpha} \quad \overset{(a)}{\Longrightarrow} \quad  \frac{u_i}{d^{\frac{\gamma}{2-\gamma}}}, \frac{u}{d^{\frac{2}{2-\gamma}}}\in C^{k,\alpha} \quad \overset{(b)}{\Longrightarrow} \quad  \frac{u_i}{u_n} \in C^{k+1,\alpha} \quad\Longrightarrow \quad\partial\Omega \in C^{k+2,\alpha}.\end{equation}

Let us notice that since nearby regular points  $u/d^{\frac{2}{2-\gamma}} \in C^\alpha$ (see Lemma \ref{prop regular solutions}), from the scheme \eqref{strategy-proof}, (a) only has to be considered when $k\geq 1$. However, it is not immediate to apply Theorem \ref{thm schauder} to the derivatives $u_i$ in order to get (a).
Indeed, by induction hypothesis we know that $u/d^{\frac{2}{2-\gamma}}\in C^{k-1,\alpha}$ and $u_i/d^{\frac{\gamma}{2-\gamma}}\in C^{k-1,\alpha}$, so that the right hand side $f$ in the equation \eqref{asdf} is then $C^{k-1,\alpha}$.
Unfortunately, using Theorem  \ref{thm schauder} (when $\gamma \geq \frac{2}{3}$) we then deduce that  $u_i/d^{\frac{\gamma}{2-\gamma}}\in C^{k-1,\alpha}$, gaining no new information.

To improve the smoothness of $u/d^{\frac{\gamma}{2-\gamma}}$ and $u_i/d^{\frac{\gamma}{2-\gamma}}$, a key idea is to consider the function 
\[v:=u-c_\gamma d^{\frac{2}{2-\gamma}},\] 
and use the identity
\[p^{\gamma-1}-q^{\gamma-1}=(\gamma-1)(p-q)q^{\gamma-2} + (p/q-1)^2 q^{\gamma-1} \int_0^1 (\gamma-1)(\gamma-2) \big(1+t p/q\big)^{\gamma-3}(1-t) dt \]
for $p,q>0$, to show that
\begin{equation}\label{eq linearized eq}
\begin{split}
	L_\kappa v & = u^{\gamma-1}-\big(c_\gamma d^{\frac{2}{2-\gamma}}\big)^{\gamma-1}+gd^{\frac{2}{2-\gamma}-2} \\
 & = \kappa \frac{v}{d^2} +  f d^{\frac{2}{2-\gamma}-2} + gd^{\frac{2}{2-\gamma}-2},
\end{split}
\end{equation}
where $g\in C^{k,\alpha}$ and
\[f:= \big(u/d^{\frac{2}{2-\gamma}}-c_\gamma\big)^2 \int_0^1 \bar c_\gamma\big(c_\gamma+t u/d^{\frac{2}{2-\gamma}}\big)^{\gamma-3}(1-t) dt.\]

The crucial point here is that, even though by inductive hypothesis we only know $u/d^{\frac{2}{2-\gamma}}$ is $C^{k-1,\alpha}$, we will show that $f$ is actually \emph{more} regular, namely $f\in C^{k,\alpha}$ if $k\geq2$.

The reason for this is that, roughly speaking, if a function $w$ is \textit{pointwise} $C^{m,\alpha}$  at $x=0$, and $w(0)=0$, then $w^2$ is actually pointwise $C^{m+1,\alpha}$ at $x=0$ if \footnote{When $m=0$, we have instead that $w^2$ is $C^{2\alpha}$ at $0$.} $m\geq1$.
And $w^2\eta$ will still be pointwise $C^{m+1,\alpha}$ as long as $\eta$ is $C^{m,\alpha}$.
This, yields that $f$ is pointwise $C^{k,\alpha}$ at all boundary points, and combined with interior estimates for $u$ allows us to prove that $f$ is indeed $C^{k,\alpha}$ in~$\overline\Omega$. Combining this with the fact that $v$ satisfies simultaneously \eqref{bdry-cond} and \eqref{bdry-cond higher}, it is possible to prove a modified version of Theorem \ref{thm schauder}, i.e., Theorem \ref{thm higher reg solutions}, to deduce $v/d^{\frac{2}{2-\gamma}} \in C^{k,\alpha}$.

Recalling the definition of $v$, and using again interior estimates for $u$, we then prove the desired estimate $u_i/d^{\frac{\gamma}{2-\gamma}} \in C^{k,\alpha}$ in (a).

\vspace{2mm}

Once we have (a) in \eqref{strategy-proof}, to prove (b) we notice that the quotient $w:=u_i/u_n$ satisfies
\[\begin{split}
{\rm div}(u_n^2 \nabla w)=0 & \quad \textrm{in}\quad \Omega \cap B_1 \\
\partial_\nu w =0 &\quad \textrm{on}\quad \partial\Omega\cap B_1, 
\end{split}\]
which is a degenerate elliptic equation since $u_n\to0$ on $\partial\Omega$.
Still, thanks to (a), we have $u_n^2=d^s a(x)$, with $s=\frac{2\gamma}{2-\gamma}$ and $a(x)\in C^{k,\alpha}$.
Using this, we can apply\footnote{Here, one has to be careful with the case $k=1$, in which the boundary condition $\partial_\nu w$ must be understood in an appropriate sense.} Schauder estimates for degenerate elliptic equations with Neumann boundary conditions due to Terraccini-Tortone-Vita~\cite{TTV22}, to show that $w\in C^{k+1,\alpha}$.

\vspace{2mm}

Although a key point in showing (a)  --for all values of $\gamma\in(0,2)$-- is Theorem \ref{thm higher reg solutions} instead of Theorem \ref{thm schauder}, the former is actually a consequence of the latter. Also,  Theorem \ref{thm schauder} has direct applications to the study of the boundary behavior of solutions to equations involving operators with Hardy-type potentials (see \cite{GV15}).\\

To prove Theorem \ref{thm schauder}, we first need a comparison principle for the operator $L_\kappa$ with boundary conditions \eqref{bdry-cond}.
We prove it by constructing explicit barriers, and reads as follows:

\begin{lemma}\label{lem-comparison}
Let $\kappa\in[-\frac14,\infty)$, $\alpha \in (0,1)$, $\Omega \subset \R^n$ be such that $\Omega \cap B_1$ is a $C^{1,\alpha}$ domain, and $d$ be as in Theorem \ref{thm schauder}.

Let $v \in C( \Omega \cap B_1)$ be any viscosity solution of
\[
-\Delta v + \kappa\,\frac{v}{d^2}\leq0\quad \text{in}\quad \Omega\cap B_1
\]
satisfying
\begin{equation}\label{bdry-cond2}
\begin{array}{rl}
\displaystyle \lim_{x\to \partial\Omega} \frac{v}{d^{\theta_-}} \leq 0 & \quad \textrm{if}\quad \kappa>-{\textstyle \frac14} \vspace{1mm} \\
\displaystyle \lim_{x\to \partial\Omega} \frac{v}{d^{1/2} |\log d|} \leq 0 & \quad \textrm{if}\quad \kappa=-{\textstyle \frac14} .
\end{array}
\end{equation}
Then, there exists $\rho \in (0 ,1)$ only depending on the $C^{1,\alpha}$ norm of $\partial \Omega$ such that for any $r\in (0,\rho]$ if  $v\leq0$ on $  \partial \Omega \cap B_r$, then  $v\leq0$ in $ \Omega \cap B_r$.
\end{lemma}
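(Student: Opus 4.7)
My plan is to prove the comparison principle by constructing an explicit strict positive supersolution $\varphi$ of $L_\kappa$ in $\Omega\cap B_\rho$ whose boundary behavior matches \eqref{bdry-cond2} from above, and then running a sliding argument: for $\varepsilon>0$, compare $v$ with $\varepsilon\varphi$ and decrease $\varepsilon$ until the first contact point. The matching of $\varphi$ with \eqref{bdry-cond2} forces the contact not to occur at $\partial\Omega$, and the strict supersolution property of $\varphi$ at an interior contact point contradicts the viscosity subsolution inequality for $v$ unless $\varepsilon=0$.

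In the regular range $\kappa>-\tfrac14$, I would take $\varphi(x) = d(x)^{\theta_-} + d(x)^{\theta_-+\alpha/2}$ for the regularized distance $d$ of Lemma \ref{lemma generalized distance}. Since $d^{\theta_-}$ is a one-dimensional kernel of $L_\kappa$ and $|\nabla d|^2 = 1 + O(d^\alpha)$, $|d\,\Delta d| = O(d^\alpha)$, the first summand contributes only a curvature error $L_\kappa(d^{\theta_-}) = O(d^{\theta_-+\alpha-2})$. For the second summand, the identity $\kappa-\tau(\tau-1) = (\theta_+-\tau)(\tau-\theta_-)$ at $\tau = \theta_-+\alpha/2 \in (\theta_-,\theta_+)$ gives
\[
L_\kappa(d^{\theta_-+\alpha/2}) = (\theta_+-\theta_--\tfrac{\alpha}{2})\tfrac{\alpha}{2}\,d^{\theta_-+\alpha/2-2} + O(d^{\theta_-+\alpha/2+\alpha-2}).
\]
Shrinking $\rho$ according to $\|\partial\Omega\|_{C^{1,\alpha}}$, $\alpha$, $\kappa$, the strictly positive leading term dominates all curvature errors (being larger by a factor $d^{-\alpha/2}$), so $L_\kappa\varphi \geq c_0\,d^{\theta_-+\alpha/2-2}$ in $\Omega\cap B_\rho$, while $\varphi/d^{\theta_-}\to 1$ at $\partial\Omega$. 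In the critical case $\kappa = -\tfrac14$ the indicial exponents coincide and $d^{1/2}(-\log d)$ plays the role of the second 1D kernel; I would take $\varphi(x) = d(x)^{1/2}(-\log d(x)) + d(x)^{1/2}(-\log d(x))^\beta$ for some $\beta\in(0,1)$. The first summand is a 1D kernel, contributing only curvature errors of order $d^{-3/2+\alpha}|\log d|$; the second summand gives the strict positivity
\[
L_{-1/4}\bigl(d^{1/2}(-\log d)^\beta\bigr) = \beta(1-\beta)\,d^{-3/2}(-\log d)^{\beta-2},
\]
which beats the curvature error since $d^\alpha|\log d|^{3-\beta}\to 0$. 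Near $\partial\Omega$ the first summand dominates, so $\varphi\asymp d^{1/2}|\log d|$, matching \eqref{bdry-cond2}.

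With the barrier at hand, fix $r\leq\rho$ and set $\varepsilon_\star = \inf\{\varepsilon>0 : v\leq\varepsilon\varphi\text{ in }\Omega\cap B_r\}$. Large $\varepsilon$ is admissible: on the compact set $\{x\in\overline{B_r}: d(x)\geq\eta\}$ (for a fixed small $\eta>0$) the function $v$ is bounded and $\varphi$ is bounded below, while in the tube $\{d<\eta\}$ the growth condition \eqref{bdry-cond2} together with the matching $\varphi\asymp d^{\theta_-}$ (respectively $d^{1/2}|\log d|$) yields $v\leq(\varepsilon/2)\varphi$. Assuming $\varepsilon_\star>0$, by continuity there exists a contact point $x_\star$ with $v(x_\star) = \varepsilon_\star\varphi(x_\star)>0$ and $v\leq\varepsilon_\star\varphi$ locally. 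The boundary matching excludes $x_\star$ being a limit point on $\partial\Omega$ since there $v/\varphi\to 0$. At an interior $x_\star$ the function $\varepsilon_\star\varphi$ is smooth and touches $v$ from above, so the viscosity subsolution property yields $L_\kappa(\varepsilon_\star\varphi)(x_\star)\leq 0$, contradicting $L_\kappa\varphi>0$. Hence $\varepsilon_\star=0$ and $v\leq 0$ in $\Omega\cap B_r$.

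The main obstacle is the critical case $\kappa=-\tfrac14$: the strict positivity is only of logarithmic order, $L_{-1/4}\varphi\geq c_1\,\varphi/(d^2|\log d|^2)$, a much thinner margin than the power $d^{-\alpha/2}$ margin in the regular range. Keeping this margin above the curvature errors $O(d^\alpha)$, while simultaneously matching the $d^{1/2}|\log d|$ boundary scale from \eqref{bdry-cond2}, is the delicate point; the observation that $d^{1/2}(-\log d)$ itself is a second 1D kernel of $L_{-1/4}$ is what makes the construction possible.
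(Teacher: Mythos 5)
Your overall strategy (an explicit strict supersolution whose boundary scale matches \eqref{bdry-cond2}, followed by a comparison/contact argument) is the same as the paper's, which uses the barrier $d^{1/2}|\ln d|^{1/2}+(\tfrac14+\kappa)d^{\theta_-}$ together with Lemma \ref{eq comparison principle}; your barriers $d^{\theta_-}+d^{\theta_-+\delta}$ and $d^{1/2}(-\log d)+d^{1/2}(-\log d)^\beta$ are fine (and in the critical case your leading term matches the $d^{1/2}|\log d|$ scale of \eqref{bdry-cond2} exactly, which makes the boundary step cleaner). However, there is a genuine gap in your sliding argument: you never use the hypothesis that $v\leq 0$ on the spherical part of the boundary. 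In your contact-point step you only exclude $x_\star$ from being a limit point of $\partial\Omega$, but the near-maximizing points of $v-\varepsilon_\star\varphi$ may accumulate on $\Omega\cap\partial B_r$, where $\varepsilon_\star\varphi$ touches $v$ from above only from inside $\overline{B_r}$ and the viscosity inequality cannot be invoked. Without ruling this out the argument would prove a statement that is false: $v=x_n^{\theta_+}$ in the half-space satisfies $L_\kappa v=0$, vanishes on $\partial\Omega=\{x_n=0\}$, satisfies \eqref{bdry-cond2}, yet is positive in $\Omega\cap B_r$. The fix is to invoke the assumption $v\leq 0$ on the lateral boundary $\Omega\cap\partial B_r$ (this is what the condition ``$v\leq0$ on $\partial\Omega\cap B_r$'' in the statement is meant to be, and is what the paper's proof actually uses): there $\varepsilon_\star\varphi>0$, so a contact point on $\Omega\cap\partial B_r$ is impossible, while accumulation at the corner $\partial\Omega\cap\partial B_r$ is already excluded by your tube estimate since $d\to0$ there. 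As written, though, this step is missing and the hypothesis is unused, so the proof is incomplete.

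Two smaller points. In the range $\kappa>-\tfrac14$ you need $\theta_-+\tfrac{\alpha}{2}\in(\theta_-,\theta_+)$ for the coefficient $(\theta_+-\tau)(\tau-\theta_-)$ to be positive; when $\kappa$ is close to $-\tfrac14$ one has $\theta_+-\theta_-=\sqrt{1+4\kappa}<\tfrac{\alpha}{2}$, so you must take the increment $\delta=\tfrac12\min\{\alpha,\theta_+-\theta_-\}$ (the error-domination still works since $\delta<\alpha$). Consequently your $\rho$ depends on $\kappa$ as well as on $\alpha$ and the $C^{1,\alpha}$ norm, which is slightly weaker than the stated dependence but harmless for every application in the paper, where $\kappa$ is fixed.
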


Using this, we prove uniform $C^{\theta_+}$ estimates for solutions to the linearized equation $L_\kappa v=0$ with zero Dirichlet boundary conditions.
This gives us compactness on the family of solutions $v$ we are studying, and thus we can run a contradiction and blow-up argument to get the higher order expansion, which is developed in Section \ref{sec4}.
This, combined with a classification of global solutions  in a half-space (see Proposition \ref{lemma general Liouville}), will yield Theorem \ref{thm schauder}, as well as its generalized version Theorem \ref{thm higher reg linear}.

\subsection{Acknowledgements}

X.R. was supported by the European Research Council (ERC) under the Grant Agreement No 101123223 (\textsc{SSNSD}), by the AEI project PID2021-125021NAI00 (Spain), AGAUR Grant 2021 SGR 00087 (Catalunya), AEI Grant RED2022-134784-T funded by MCIN/AEI/10.13039/501100011033 (Spain), and the AEI Maria de Maeztu Program for Centers and Units of Excellence in R\&D CEX2020-001084-M.

\subsection{Organization of the paper}

In Section \ref{sec2} we give some preliminary lemmas.
Then, in Sections \ref{sec3} and \ref{sec4} we develop the boundary regularity for the linearized operator $L_\kappa$, proving in particular Theorem \ref{thm schauder}.
Finally, in Section \ref{sec5} we prove Theorem \ref{thm 1}, while in Section \ref{sec6} we give some technical lemmas that are used throughout the proofs.

\subsection{Notations and conventions}

We denote $\mathbf{P}_{k}$ the space of polynomials of order $k$ in $n$ variables. Polynomials will be written in their coefficient expansion following convention
$$Q \in \mathbf{P}_{k} \qquad \implies \qquad Q(x)= \sum_{\alpha \in \N^n\,,\, |\alpha|\leq k} q^{(\alpha)}x^\alpha, \quad x\in \R^n.$$

In the study of boundary regularity in a domain $\Omega$, we will always assume that $0\in \partial \Omega$ and, whenever $\Omega$ is regular at $0$, we will orient it so that the canonical vector $-e_n$ corresponds to its outer unit normal at zero, unless otherwise stated. 
Note that these two simplifications are possible thanks to the translation and rotation invariances of \eqref{eq alt-phil}. 
In concordance with this assumption, we will adopt the notation $x= (x',x_n) \in \R^{n-1}\times \R$, calling the first $n-1$ coordinates ``tangential'', whereas we will refer to the coordinate $x_n$ as the normal direction.

Associated to $\kappa\in [\frac{-1}{4}, \infty)$, we define the quantities 
\begin{equation}\label{eq powers}
\theta_+ =\frac{1+\sqrt{1+4\kappa}}{2}, \qquad \theta_-= \frac{1-\sqrt{1+4\kappa}}{2},
\end{equation}
which arise in the study of the linearized operator \eqref{eq linearized alt-phil 0} as discussed in the introduction (see also Lemma \ref{lemma one dimensional problem}).

When $\beta \notin \N$ we use the single index notation $C^{\beta}$ for the H\"older space $C^{\lfloor\beta\rfloor, \beta-\lfloor \beta \rfloor}$ where $\lfloor \cdot \rfloor$ denotes the integer part of a positive real number. Regarding H\"older norms of domains, we will assume throughout this paper that $ B_2 \cap \Omega$ is a graph with $C^{\beta}$ norm bounded by one. We remark that this assumption does not incur in any loss of generality since any solution $u$ to \eqref{eq alt-phil} can be rescaled as $u(rx)/r^\frac{2}{2-\gamma}$ to obtain a new solution in a domain with smaller H\"older norm where the graphycality assumption holds. 

Finally, following a widely used notation, we will use $C$ within proofs of propositions to indicate a unspecified constant, whose value will be allowed to change from line to line, and whose dependences will be clearly established in the statements of the corresponding propositions.  We will make use of sub-indices whenever we will want to underline certain hidden dependencies of the constant.

\section{Regularized distance and first estimates}
\label{sec2}

In our analysis we will use some classical properties of the distance function to the boundary and some of its generalizations. It is a classical result that, for $\beta\geq 1$, nearby $C^\beta$ pieces of boundary, the distance function is $C^\beta$. A priori the domains we are dealing with are $C^{1,\alpha}$, this forces us to consider regularized versions of the distance function.\\

\begin{lemma}\label{lemma generalized distance}
	Let $\beta>1$ with $\beta\notin \N$, and let $\partial \Omega \cap B_2$ be a $C^{\beta}$ graph. Then, there exist a generalized distance function $d \in C^{\infty}( \Omega \cap B_1)\cap C^{\beta}(\overline{\Omega}\cap B_1)$ satisfying
	\begin{enumerate}
		
		\item 
		\begin{equation}\label{eq equivalence distance lieberman}
			\frac{1}{C} \text{dist}(x)\leq d(x) \leq  	C \text{dist}(x), \qquad x\in \Omega \cap B_1,
		\end{equation}
		where $ \text{dist}(x):= \text{dist}(x, \partial \Omega)$.
		
		\item 
	\begin{equation}\label{eq expansion distance}
		d(x)= x_n +Q(x)+g(x), \qquad x\in  \Omega\cap B_1,
	\end{equation}
		where $Q \in \mathbf{P}_{\lfloor \beta \rfloor}$ with its zeroth and first order terms identically equal to zero and also satisfying $\Vert Q \Vert_{L^\infty(B_1)}\leq C$, and where $g \in C^{\infty}(\Omega \cap B_1)$ satisfies $|\nabla g| \leq C|x|^{\beta-1}$.

		\item For any $\lambda \in \R$, if $\beta \in (1,2)$
		
		\begin{equation}\label{eq derivative power}
		|\Delta d^\lambda - \lambda(\lambda-1) d^{\lambda-2}|\leq C d^{\beta+\lambda-3} \, \mbox{ in  $ \Omega\cap B_1$,}
		\end{equation}
		and if $\beta >2$, then
		
		\begin{equation}\label{eq lap distpower}
			\Delta d^\lambda - \lambda(\lambda-1) d^{\lambda-2}= d^{\lambda-1}g(x), \quad \mbox{in  $ \Omega\cap B_1$,}
		\end{equation}
	with $g \in C^{\beta-2}( \overline{\Omega}\cap B_1)\cap  C^{\infty}(  \Omega\cap B_1)$.
		 
		\item 
		In particular, we have that
		\begin{equation}\label{eq laplacian altphillips}
		\Delta \Big( c_\gamma d^\frac{2}{2-\gamma} \Big) -  \Big( c_\gamma d^\frac{2}{2-\gamma} \Big)^{\gamma-1} = h \, \mbox{ in  $ \Omega\cap B_1$,}
		\end{equation}		
with $|h|\leq Cd^{\beta +\frac{2}{2-\gamma}-3}$ if $\beta \in (1,2)$ and $h= d^\frac{\gamma}{2-\gamma}g(x)$ with $g \in C^{\beta-2}( \overline{\Omega}\cap B_1)$ if $\beta>2$.

	\item If $\beta \in (1,2)$
	 \begin{eqnarray}\label{eq crit barrier}
		-\Delta \Big(d^\frac{1}{2} |\ln(d)|^\frac{1}{2}\Big)+ \frac{1}{4} \frac{d^\frac{2}{3} |\ln(d)|^\frac{1}{2}}{d^2} = \frac{1 }{4d^{\frac{3}{2}}|\ln(d)|^\frac{3}{2}}\Big(1 +h\Big),
	\end{eqnarray}
 in  $ \cap \{d<1\}\cap B_1$ with $|h|\leq C(\ln(d))^2d^{\beta-1}$.
	\end{enumerate}
\end{lemma}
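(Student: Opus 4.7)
The plan is to produce $d$ via a Lieberman-type mollification of the signed distance and then derive the five properties by direct computation; the construction itself is the main obstacle, while items (3)--(5) are essentially algebraic identities once the boundary expansion is in place.

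\emph{Construction.} I would flatten $\partial\Omega\cap B_2$ as a graph $\{x_n=\phi(x')\}$ with $\phi\in C^\beta$, $\phi(0)=0$, $\nabla\phi(0)=0$, and $\|\phi\|_{C^\beta}\le 1$. Starting from the naive candidate $\tilde d(x):=(x_n-\phi(x'))/\sqrt{1+|\nabla\phi(x')|^2}$, which already agrees with the true distance up to quadratic error and lies in $C^\beta$ up to the boundary, the idea is to replace $\phi$ by a mollification at scale comparable to $\tilde d(x)$ (Lieberman's construction). This yields $d\in C^\infty(\Omega\cap B_1)\cap C^\beta(\overline\Omega\cap B_1)$ equivalent to $\mathrm{dist}(\cdot,\partial\Omega)$, with $|\nabla d|\to 1$ at $\partial\Omega$, $|\nabla d|^2-1=O(d^{\beta-1})$, and $|D^k d|\le C d^{\beta-k}$ for integer $k>\beta$. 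Property (1) is then immediate.

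\emph{Expansion (2).} Writing the Taylor expansion $\phi(x')=P(x')+R(x')$ with $P\in\mathbf{P}_{\lfloor\beta\rfloor}$ (no constant nor linear part) and $|\nabla R(x')|\le C|x'|^{\beta-1}$, together with the smoothness of $(1+|\nabla\phi|^2)^{-1/2}$, gives $\tilde d(x)=x_n-P(x')+r(x)$ with $|\nabla r|\le C|x|^{\beta-1}$. Since the mollification does not alter the Taylor polynomial at the boundary, the same expansion persists for $d$ with $Q:=-P$.

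\emph{Items (3)--(5).} For (3), I would start from
\[
\Delta d^\lambda=\lambda(\lambda-1)d^{\lambda-2}|\nabla d|^2+\lambda d^{\lambda-1}\Delta d,
\]
and insert $|\nabla d|^2-1=O(d^{\beta-1})$ and $|\Delta d|\le Cd^{\beta-2}$ to obtain \eqref{eq derivative power} when $\beta\in(1,2)$; for $\beta>2$ the same identity combined with the stronger $C^{\beta-2}$ regularity of $|\nabla d|^2-1$ and $\Delta d$, together with the fact that $|\nabla d|^2-1$ vanishes to first order at $\partial\Omega$, yields \eqref{eq lap distpower}. Item (4) follows from (3) with $\lambda=\tfrac{2}{2-\gamma}$: a short computation using \eqref{eq coefficient distance} shows $\lambda(\lambda-1)c_\gamma=c_\gamma^{\gamma-1}$, so the leading terms cancel and only the error from (3) remains. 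For (5), writing $w=d^{1/2}|\ln d|^{1/2}$ and using $\Delta w=w''(d)|\nabla d|^2+w'(d)\Delta d$, the resonance $(d^{1/2})''=-\tfrac14 d^{-3/2}$ cancels the Hardy term $\tfrac14 w/d^2$ at leading order; the surviving contribution from differentiating $|\ln d|^{1/2}$ produces the announced prefactor $\tfrac{1}{4d^{3/2}|\ln d|^{3/2}}$, with remainder $|h|\le C(\ln d)^2 d^{\beta-1}$ controlled by the same estimates on $|\nabla d|^2-1$ and $\Delta d$.

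The real obstacle is the construction step: producing a regularization that is simultaneously interior-smooth, $C^\beta$ up to the boundary with the correct pointwise decay of higher derivatives, and whose Taylor polynomial at $\partial\Omega$ coincides with that of the graph function $\phi$. Once this is in place, (3)--(5) reduce to routine differential calculus and the algebraic identities tied to $c_\gamma$ and the $\lambda=\tfrac12$ resonance of the Hardy operator.
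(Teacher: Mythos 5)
Your proposal is correct and follows essentially the same route as the paper: a Lieberman-type regularized distance (the paper defines it implicitly by $d(x)=G(x,d(x))$ with $G$ a mollification of $\mathrm{dist}$, while you mollify the boundary graph at scale comparable to the distance, a standard equivalent variant yielding the same key properties $|\nabla d|^2-1=O(d^{\beta-1})$ and $|D^k d|\le Cd^{\beta-k}$), followed by the identical direct computations for (3)--(5), including the cancellation $c_\gamma\lambda(\lambda-1)=c_\gamma^{\gamma-1}$ for $\lambda=\frac{2}{2-\gamma}$ and the $\lambda=\frac12$ resonance for the logarithmic barrier. The level of detail in your construction step is comparable to the paper's, which likewise delegates the core regularization properties to Lieberman's work.
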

\begin{proof}
 Let us start addressing (1) and (2). We follow the construction in \cite{L} (see also \cite[Lemma B.0.1]{FR24} considering
\begin{equation}\label{eq lieberman conv}
	G(x,\rho) = \int_{B_1} \text{dist}\Big(x-\frac{\rho}{2}z\Big)\phi(z)dz
\end{equation}
with $\phi \in C_c^\infty(B_1)$ a radial smooth non-negative function satisfying $\int_{B_1} \phi =1$. Then $d$ is defined implicitly by $d(x) = G(x,d(x))$. In virtue of the implicit function theorem it follows that   $d \in C^{\beta}( \overline{\Omega}\cap B_1)\cap C^{\infty}(\Omega \cap B_1)$. Additionally, (1) is a consequence of the construction as discussed in \cite{L}. On the other hand, by differentiating the defining expression for $d$ at a point $x\in  \partial \Omega \cap B_2$, we get
\begin{equation}\label{eq gradient on the boundary}
	\nabla d(x) = \nabla \text{dist}(x) \int_{B_1} \phi(z)dz - \frac{1}{2}\nabla \text{dist}(x) \cdot \int_{B_1} z\phi(z)dz= \nabla \text{dist}(x),
\end{equation}
thanks to the properties of $\phi$. Additionally, thanks to the uniform estimates on $d$ and the orientation assumption on $\Omega$ (2) follows. We also remark that this construction satisfies
\begin{equation}\label{eq der distance}
	|\partial^{\mu} d|\leq C_{k} d^{\beta-|\mu|}, \mbox{ for $|\mu|>\beta$}.
\end{equation}
	
For part (3), a direct computation yields
\begin{equation}\label{eq laplace powerdist}
	\Delta d^\lambda = \lambda(\lambda-1)|\nabla d|^2 \frac{d^\lambda}{d^2}+\lambda d^{\lambda-1}\Delta d.
\end{equation}

So, if $\beta \in (1,2)$, we deduce from \eqref{eq gradient on the boundary} that $|\nabla d(x)| =1$ on $ \partial \Omega\cap B_1$, which combined with the $C^\beta$ regularity implies that
\begin{equation}\label{eq expansion gradsquare distance}
	\Big||\nabla d|^2 - 1\big| \leq C d^{\beta-1}\, \, \mbox{ in  $ \Omega \cap B_1$}.
\end{equation}

So, by combining \eqref{eq der distance}, \eqref{eq laplace powerdist}, and \eqref{eq expansion gradsquare distance} we deduce \eqref{eq derivative power}. When $\beta>2$, we have that the usual distance function satisfies \eqref{eq expansion distance} (see, e.g., \cite[Section 14.6]{GT}), which is inherited by $d$. Hence, these considerations altogether with \eqref{eq coefficient distance} imply (4), (4) realarly, we argue by a direct computation to show part (5). Given $\eta>0$, from \eqref{eq derivative power} and \eqref{eq der distance} we deduce
\begin{eqnarray*}
	\Delta d^\lambda |\ln(d)|^\eta &=& \Delta(d^\lambda) |\ln(d)|^\eta+2 \nabla (d^\lambda)\cdot \nabla  |\ln(d)|^\eta+d^\lambda \Delta( |\ln(d)|^\eta)\\
	&=& \Big(\lambda(\lambda-1) \frac{d^\lambda |\ln(d)|^\eta}{d^2}+ \Big(\frac{(\eta-1)}{|\ln(d)|}-2\lambda+1\Big)\frac{d^\lambda \eta |\ln(d)|^{\eta-1}}{d^2}\Big)|\nabla d|^2\\
	&&+ |\ln(d)|^\eta O( d^{\beta+\lambda-3})+|\ln(d)|^{\eta-1} O( d^{\beta+\lambda-3})
\end{eqnarray*}

In particular, if $\lambda= \frac{1}{2}$ and $\eta=1/2$, the previous expression together with \eqref{eq expansion gradsquare distance} yields the expansion
\begin{equation*}
		\Delta \Big( d^\frac{1}{2} |\ln(d)|^\frac{1}{2}\Big) =\frac{1}{4} \frac{d^\frac{2}{3} |\ln(d)|^\frac{1}{2}}{d^2}-\frac{1 }{4d^{\frac{3}{2}}|\ln(d)|^\frac{3}{2}}\Big(1 +O(\ln(d)^2d^{\beta-1})\Big),
\end{equation*}
which concludes the proof.
\end{proof}

In a similar vein, we construct the following barrier function that is still a sort of generalized distance satisfying other suitable properties.
\begin{lemma}\label{lemma distance barrier}
	Let $\rho \in (0,\frac{1}{2})$, $\beta \in (1,2)$ and let $\partial \Omega \cap B_1$ be a $C^{\beta}$ graph.  Then, there exist a constant $C>0$ only depending on $\beta$ and $\rho$, and a function $\phi \in C^{\infty}(\Omega \cap B_1)\cap C^{\beta}( \overline{\Omega}\cap B_r) $ for any $r\in (0,1)$ such that
	\begin{enumerate}
	\item 	
\begin{equation}\label{eq equivalence phi}
	\frac{1}{C} \text{d}(x)\leq \phi(x) \leq  	C \text{d}(x), \qquad x\in  \Omega\cap B_\rho,
\end{equation}
\item
\begin{equation}\label{eq boundary distance phi}
	\phi(x)\geq \frac{1}{C}\quad \mbox{for $x \in \Omega\cap  B_{2\rho}^c$},
\end{equation}			
\item 	 
\begin{equation}\label{eq expansion phi}
	\phi(x)= x_n +g(x), \qquad x\in \Omega\cap B_1,
\end{equation}
where $g \in C^{\infty}(\Omega \cap B_2)$ satisfies $|\nabla g| \leq C|x|^{\beta-1}$ in $\Omega\cap B_1$,

	\item and for $\lambda >0$ we have that			
	\begin{equation}\label{eq derivative power phi}
		|\Delta \phi^\lambda - \lambda(\lambda-1) \phi^{\lambda-2}|\leq C |x|^{\beta+\lambda-3} \, \mbox{ in  $\Omega \cap B_1$.}
	\end{equation}
\end{enumerate}
\end{lemma}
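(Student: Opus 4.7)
The plan is to construct $\phi$ by interpolating between the regularized distance $d$ from Lemma \ref{lemma generalized distance} near the origin and a large positive constant away from it, using a smooth radial cutoff. Concretely, fix a smooth $\eta:\R^n\to[0,1]$ with $\eta\equiv 1$ on $B_\rho$ and $\eta\equiv 0$ outside $B_{3\rho/2}$, and a constant $M\geq 2\|d\|_{L^\infty(\Omega\cap B_1)}+1$, and set
\[
\phi(x):=\eta(x)\,d(x)+(1-\eta(x))\,M.
\]
The regularity $\phi\in C^\infty(\Omega\cap B_1)\cap C^\beta(\overline\Omega\cap B_r)$ for any $r<1$ is inherited from $d$, and $\phi>0$ throughout $\Omega\cap B_1$ (on $B_\rho$, $\phi=d>0$; elsewhere $(1-\eta)M>0$ since $\eta<1$).

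Properties (1) and (2) are immediate: on $B_\rho$, $\phi\equiv d$, so (1) reduces to \eqref{eq equivalence distance lieberman}; outside $B_{3\rho/2}\subset B_{2\rho}$, $\phi\equiv M$, giving (2). For (3), since $\beta\in(1,2)$ the polynomial $Q$ in Lemma \ref{lemma generalized distance}(2) must vanish, so $d=x_n+g_d$ with $|\nabla g_d|\leq C|x|^{\beta-1}$. Setting $g:=\phi-x_n$, on $B_\rho$ one has $g\equiv g_d$, while on $\{|x|\geq\rho\}$ both $|\nabla\phi|$ and $|e_n|$ are uniformly bounded, so $|\nabla g|\leq C\leq C\rho^{-(\beta-1)}|x|^{\beta-1}$, with constants depending on $\rho$.

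For (4) I would split $\Omega\cap B_1$ into three pieces. Outside $B_{3\rho/2}$, $\phi\equiv M$ gives $\Delta\phi^\lambda=0$, while $\lambda(\lambda-1)\phi^{\lambda-2}$ is a bounded constant and $|x|^{\beta+\lambda-3}$ is bounded above and below by constants depending on $\rho$, $\beta$, $\lambda$, so the estimate is trivial. In the transition annulus $\rho\leq|x|\leq 3\rho/2$, $\phi$ is smooth with $\phi$ and $\phi^{-1}$ both bounded, so $\Delta\phi^\lambda$ and $\lambda(\lambda-1)\phi^{\lambda-2}$ are bounded while $|x|^{\beta+\lambda-3}$ is comparable to a $\rho$-dependent constant. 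On the inner piece $B_\rho\cap\Omega$, $\phi\equiv d$, and one applies \eqref{eq laplace powerdist} together with the estimates $|\nabla d|^2-1=O(d^{\beta-1})$ and $|\Delta d|=O(d^{\beta-2})$ from Lemma \ref{lemma generalized distance}, combined with $d\leq|x|$ since $0\in\partial\Omega$, to reach the desired $|x|^{\beta+\lambda-3}$ bound.

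The main technical obstacle is (4) on the inner piece when $\beta+\lambda-3<0$: the straightforward substitution of $d\leq|x|$ into Lemma \ref{lemma generalized distance}(3) goes in the wrong direction in that regime, so one needs finer $|x|$-power estimates for $|\nabla d|^2-1$ and $\Delta d$ inside $B_\rho$, which can be extracted by anchoring the $C^\beta$-graph structure of $\partial\Omega$ at the origin and tracking the resulting dependence on $|x|$ rather than on $d$. The remaining work amounts to bookkeeping of constants through the cutoff.
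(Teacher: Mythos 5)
Your gluing construction handles (1)--(3) correctly, but it has a genuine gap at property (4), and it sits exactly where the paper's choice of $\phi$ does the real work. Since your $\phi$ is not harmonic, $\Delta\phi^\lambda$ contains the term $\lambda\phi^{\lambda-1}\Delta\phi$, and in the transition annulus $\{\rho\le |x|\le 3\rho/2\}$ this term is out of control: there $\Delta\phi=\eta\,\Delta d+2\nabla\eta\cdot\nabla d+(d-M)\Delta\eta$, and for a domain that is only $C^\beta$ with $\beta\in(1,2)$ the bound $|\Delta d|\le C d^{\beta-2}$ from Lemma \ref{lemma generalized distance} is sharp, i.e.\ $\Delta d$ genuinely blows up at the boundary. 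Approaching a boundary point $z$ with $\rho<|z|<3\rho/2$ and $\eta(z)\in(0,1)$, one has $\phi\to(1-\eta(z))M>0$, so $\phi^{\lambda-1}$ tends to a positive constant while $\eta\,\Delta d\sim d^{\beta-2}\to\infty$; hence the left-hand side of \eqref{eq derivative power phi} is unbounded near $z$, whereas the right-hand side is comparable to the constant $\rho^{\beta+\lambda-3}$. Your claim that in the annulus ``$\phi$ and $\phi^{-1}$ are bounded, so $\Delta\phi^\lambda$ is bounded'' is therefore false: smoothness of $d$ holds only in the open set, and $D^2d$ is not uniformly bounded up to a merely $C^\beta$ boundary. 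The obstacle you yourself flag on the inner piece is also real and not repairable as you suggest: near boundary points with $|x|\asymp\rho$ the quantities $|\nabla d|^2-1$ and $\Delta d$ are of size $d^{\beta-1}$ and $d^{\beta-2}$ regardless of where the graph is anchored, so no finer $|x|$-power estimate is available for the regularized distance; for $\lambda<2$ already $(|\nabla d|^2-1)\,d^{\lambda-2}$ is unbounded at such points while $|x|^{\beta+\lambda-3}$ is not.

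The paper proceeds completely differently: $\phi$ is (a Hopf-normalized multiple of) the harmonic function in $\Omega\cap B_1$ with boundary values $\mathrm{dist}(\cdot,\,B_\rho\cap\partial\Omega)$. Harmonicity eliminates the $\phi^{\lambda-1}\Delta\phi$ term altogether, so the only error in (4) is $\lambda(\lambda-1)\big(|\nabla\phi|^2-1\big)\phi^{\lambda-2}$, which is controlled via the expansion \eqref{eq expansion phi} (boundary Schauder estimates plus the normalization $\nabla\phi(0)=e_n$ give $|\nabla\phi-e_n|\le C|x|^{\beta-1}$); this produces a bound of the form $C|x|^{\beta-1}\phi^{\lambda-2}$, which is precisely what is used later (cf.\ \eqref{eq operator higher power}), and property \eqref{eq boundary distance phi} comes for free because the Dirichlet data vanish only on $B_\rho\cap\partial\Omega$. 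Note that even this weaker form of (4) fails for your glued $\phi$ in the annulus, since there $\phi$ is bounded below while $\Delta\phi$ blows up. Any construction avoiding the harmonic replacement would need some substitute mechanism that suppresses $\Delta\phi$ near all of $\partial\Omega\cap B_1$; a cutoff against a constant cannot provide it.
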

	
\begin{proof}
	Let $\phi_0$ be the unique solution of
	\begin{equation}\label{eq gen distance}
		\begin{cases}
			\Delta \phi_0 = 0,  & \Omega\cap B_1,\\
			\phi_0=\text{dist}(\cdot,  B_{\rho}\cap \partial \Omega  ),  &\partial \big(\Omega\cap B_1 \big),
		\end{cases}
	\end{equation}
	where $\text{dist}$ is the standard distance function to a set. Let us notice that $\phi_0\in C^{\infty}( \Omega\cap B_1)\cap C^{\beta}(\overline{ \Omega}\cap B_r)$ for any $r \in (0,1)$. By the maximum principle $\phi_0>0$ in $\Omega \cap B_1$. Thus, since $\phi_0$ vanishes tangentially along $\partial \Omega$, Hopf's lemma together with the assumption $\nu_{\Omega}(0)= -e_n$ imply that $\nabla \phi(0) = C_{\Omega} e_n$ for some $C_{\Omega}$ satisfying $\frac{1}{C(\beta)}\leq  C_{\Omega} \leq C(\beta)$ thanks to the uniform $C^{\beta}$ bound on $\Omega \cap B_1$. Putting $\phi= \frac{1}{C_\Omega}\phi_0$, we deduce \eqref{eq boundary distance phi} from standard regularity for harmonic functions \cite[Lemma A.2]{AR}, whereas \eqref{eq boundary distance phi} follows directly from the definition of $\phi_0$.\\
	
	Part (3)  follows from boundary Schauder estimates and the uniform bound on the $C^{\beta}$ norm of the domain. Lastly, for part (4) we argue as in Lemma \ref{lemma generalized distance}
	\begin{equation}\label{eq laplace phi}
		\Delta \phi^\lambda = \lambda(\lambda-1)|\nabla \phi|^2 \frac{d^\lambda}{d^2},
	\end{equation}
	where we have used the harmonicity of $\phi$. So, by combining \eqref{eq laplace phi}, with the fact that $|\nabla \phi(0)|=1$ and the expanssion \eqref{eq expansion phi}, we obtain \eqref{eq derivative power phi}.	
\end{proof}

In the following lemma, we recall some properties of solutions to \eqref{eq alt-phil} nearby regular points.

\begin{proposition}\label{prop regular solutions}
	Let $u$ be a solution to \eqref{eq alt-phil} and let $0 \in \partial \Omega$ be a regular free boundary point. Then, there exists $r_0>0$ such that $\partial \Omega$ is $C^{1,\alpha}$ for any $\alpha \in (0,1)$. Moreover, there exist $ C, \rho>0$ (depending only on $\alpha, \gamma$, and $n$) such that 
	\begin{equation}\label{eq basic regularity dist}
		\Big|u(x)-c_\gamma d^\frac{2}{2-\gamma}(x)\Big| \leq C |x|^{{\frac{2}{2-\gamma} + \alpha}} \quad \mbox{for $x\in \Omega\cap B_{\rho}$}.
	\end{equation}	
	
	Additionally, 
	\begin{equation}\label{eq normal bhevior}
	 d(x)^{{\frac{\gamma}{2-\gamma}}}\leq Cu_n(x) \quad \mbox{for $x\in \Omega\cap B_{\rho}$}.
	\end{equation}
\end{proposition}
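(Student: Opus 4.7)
The plan is to derive the three claims from the classical flatness-improvement scheme for the Alt--Phillips problem, a compactness/contradiction argument for the quantitative expansion, and standard Hopf-type non-degeneracy.

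For the first assertion, at a regular point some blow-up of $u$ is, by definition, the half-plane solution $c_\gamma(x\cdot\nu)_+^{2/(2-\gamma)}$, and the improvement-of-flatness theorem for the Alt--Phillips problem (e.g.\ \cite{AP86,DS21,WY22}) yields $C^{1,\alpha_0}$ regularity of $\partial\Om$ in a small ball $B_{r_0}$ for some fixed $\alpha_0>0$. To upgrade this to arbitrary $\alpha<1$ I would iterate at small scales: since $u_r(x):=u(rx)/r^{2/(2-\gamma)}$ is again a solution and, by the convergence of blow-ups at a regular point, becomes arbitrarily flat as $r\to 0$, reapplying the flatness improvement with a progressively smaller flatness parameter at each dyadic scale produces a geometric decay of the oscillation of the outer normal at any prescribed H\"older rate, at the price of shrinking $r_0$ with $\alpha$.

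For the expansion \eqref{eq basic regularity dist}, once $\partial\Om\in C^{1,\alpha}$ in $B_{r_0}$, Lemma \ref{lemma generalized distance} provides a regularized distance $d$ with $d(x)=x_n+O(|x|^{1+\alpha})$ together with the identity \eqref{eq laplacian altphillips}. I would argue by compactness and contradiction: assume \eqref{eq basic regularity dist} fails, set $M_k:=\sup_{B_{r_k}}|u-c_\gamma d^{2/(2-\gamma)}|$ along $r_k\to 0$ with $M_k\gg r_k^{2/(2-\gamma)+\alpha}$, and normalize the defects as $w_k(x):=(u-c_\gamma d^{2/(2-\gamma)})(r_kx)/M_k$. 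Using \eqref{eq laplacian altphillips}, the uniqueness of the regular blow-up, and a dyadic growth estimate on $M_k$, the sequence $w_k$ converges to a global solution $w_\infty$ of the linearized Hardy-type problem on a half-space with subquadratic growth relative to the model $x_n^{2/(2-\gamma)}$. A Liouville-type classification in the half-space (in the spirit of what will be developed in Section \ref{sec4}) then forces $w_\infty\equiv 0$, contradicting $\|w_k\|_{L^\infty(B_1)}=1$. An elementary alternative is to construct explicit barriers of the form $C|x|^{2/(2-\gamma)+\alpha}$ and to use \eqref{eq laplacian altphillips} directly.

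For \eqref{eq normal bhevior}, interior gradient estimates applied to $u$ on balls $B_{d(x)/2}(x)$ (where $u>0$ and the equation is smooth) transfer \eqref{eq basic regularity dist} to $\nabla u$: differentiating the model $c_\gamma d^{2/(2-\gamma)}$ yields the leading term $\tfrac{2c_\gamma}{2-\gamma}\, d^{\gamma/(2-\gamma)}\nabla d$, while the error $O(|x|^{2/(2-\gamma)+\alpha})$ contributes only a lower-order correction, and $\nabla d\to e_n$ at $0$ gives $u_n\geq c\, d^{\gamma/(2-\gamma)}$. The main obstacle is the expansion step: one has to rule out intermediate modes of the linearized operator $L_{\kappa_\gamma}$ on scales between $r_k^{2/(2-\gamma)}$ and $r_k^{2/(2-\gamma)+\alpha}$. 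Since only the leading $d^{2/(2-\gamma)}$ mode matters here, this boils down to a standard dichotomy based on the uniqueness of the regular blow-up, and the full higher-order expansion is postponed to Sections \ref{sec3}--\ref{sec4}.
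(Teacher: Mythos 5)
Your overall strategy (flatness improvement for the first claim, a quantitative expansion, then transfer to the gradient) is in the right family, but two steps have genuine gaps, and your route for \eqref{eq basic regularity dist} is heavier than what the paper does. For the first claim, iterating a fixed improvement-of-flatness lemma ``with a progressively smaller flatness parameter at each dyadic scale'' does not improve the H\"older exponent: a fixed-step improvement (flatness halves when the radius shrinks by a fixed factor $\rho_0$) always produces the same exponent $\alpha_0=\log 2/\log(1/\rho_0)$, no matter how small the initial flatness is. To reach every $\alpha<1$ one needs an improvement statement whose decay rate itself can be taken arbitrarily close to $1$ (possible because the linearized problem has smooth estimates), with initial flatness threshold depending on $\alpha$; this is exactly what the paper invokes, namely \cite[Proposition B.2]{FY23} applied to the transformed function $w=c\,u^{\frac{2-\gamma}{2}}$, which directly yields $|w-(x_n)_+|\le C|x|^{1+\alpha}$ for any prescribed $\alpha\in(0,1)$. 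From that single estimate the paper then gets $C^{1,\alpha}$ of the free boundary by comparing normals at nearby points, and \eqref{eq basic regularity dist} by the mean value theorem applied to $t\mapsto t^{\frac{2}{2-\gamma}}$ together with $|d-(x_n)_+|\le C|x|^{1+\alpha}$ and $u\le Cd^{\frac{2}{2-\gamma}}$ --- no blow-up/Liouville argument is needed. Your compactness route for \eqref{eq basic regularity dist} would in any case require equicontinuity of the normalized defects up to the boundary (i.e.\ barriers/growth lemmas of the type only developed in Sections \ref{sec3}--\ref{sec4}) and, more importantly, it cannot produce an exponent better than the one already encoded in the flatness scheme, so it does not bypass the need for the $\alpha$-dependent improvement statement.

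For \eqref{eq normal bhevior} your interior-estimate argument has a quantitative gap as written: applying \eqref{eq basic regularity dist} centered at the origin, the gradient estimate on $B_{d(x)/2}(x)$ produces an error of size $|x|^{\frac{2}{2-\gamma}+\alpha}/d(x)$, which is \emph{not} lower order than the leading term $d(x)^{\frac{\gamma}{2-\gamma}}$ when $d(x)\ll |x|^{1+\alpha\frac{2-\gamma}{2}}$, i.e.\ for points much closer to $\partial\Omega$ than to the origin. The fix is to use the expansion centered at the nearest free boundary point $z$ (legitimate, since the estimate is uniform over nearby boundary points), so that the error in $B_{d(x)/2}(x)$ is $O(d^{\frac{2}{2-\gamma}+\alpha})$, together with \eqref{eq laplacian altphillips} to control the right-hand side; alternatively, and more simply, the paper avoids this entirely by noting that $w=c\,u^{\frac{2-\gamma}{2}}$ has continuous gradient with $w_n(0)=1$, whence $w_n\ge \frac1C$ near $0$ and $u_n= c'\,w^{\frac{\gamma}{2-\gamma}}w_n\ge \frac1C d^{\frac{\gamma}{2-\gamma}}$.
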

\begin{proof}
	Let us start recalling that nearby $0$, $\partial \Omega$ is $C^{1,\alpha_0}$ for some $\alpha_0$ in virtue of the results in  \cite{AP86, DS21}. Our goal is to show that $\alpha_0$ can be taken arbitrarily close to 1. We will show that this can be inferred from  Appendix B in \cite{FY23} - whose results, in that appendix, even if stated for $\gamma \in (0,1)$, hold true for the full range $\gamma \in (0,2)$ with exactly the same proofs.\\
	
	Let us start deriving \eqref{eq basic regularity dist}. Set $w:= \sqrt{\frac{\gamma}{2}}\beta v^\frac{2-\gamma}{2}$, which is normalized so that $|\nabla w(0)|=1$, see \cite{DS21}. Also, let us extend $w$ as zero outside of $\Omega$. Since $0$ is a regular point for $u$ and $\nu(0)=-e_n$, given any $\e>0$, we can find $\rho_\e$ such that 
	\begin{equation}\label{eq initial flatness}
		(x_n - C\rho_\e\e)_+ \leq w(x) \leq (x_n + C\rho_\e \e)_+, \quad \mbox{ $x\in B_{r}$}
	\end{equation}
	where $C>0$ is a universal constant.  Let $\alpha \in (0,1)$, in virtue of \cite[Proposition B.2]{FY23}, if we select $\e_0>0$ (and in consequence $\rho_0$) small enough in \eqref{eq initial flatness}, we have that for any $r \in (0, \rho_0/2]$ 
	\begin{equation*}\label{eq initial step}
		|w(x) - (x\cdot e_0)_+ |\leq C\e_0 r^{1+\alpha}, \quad \mbox{ $x\in B_{r}$}
	\end{equation*}
	where $C=C(\alpha, \gamma)$ and for some $e_0 \in \mathbb{S}^{n-1}$. Equivalently, we have that 
	\begin{equation*}
		|w_r(x) - (x\cdot e_0)_+ |\leq C\e r^{\alpha}, \quad \mbox{ $x\in B_{1}$}
	\end{equation*}
	where $w_r(x)= \frac{w(r x)}{r}$. However, since $0$ is a regular point for $u$ and  $\nu(0)=-e_n$, we have that $w_r(x)\to (x_n)_+$ as $r\to 0^+$, implying that $e_0=e_n$ and therefore
	\begin{equation*}
		|w_r(x) - (x_n)_+ |\leq C\e_0 r^{\alpha} \quad \mbox{ $x\in B_{1}$}
	\end{equation*}
	for $r \in (0, \rho_0/2)$. From this last inequality we easily deduce
	\begin{equation}\label{eq approximation}
		|w(x) - (x_n )_+ |\leq C|x|^{1+\alpha} \quad \mbox{ $x\in B_{\rho_0/2}$.}
	\end{equation}
	
	Repeating the same argument in a neighborhood of $0$, we find $\rho_1>0$ such that for $z\in B_{\rho_1}\cap \partial \Omega$
	\begin{equation}\label{eq app anypoint}
		\big|w(z+x) - (x \cdot (-\nu(z)) )_+ \big|\leq C|x|^{1+\alpha} \quad \mbox{ $x\in B_{\rho_1}(z)$,}
	\end{equation}
	where $\nu(z)$ is the outer normal of $\partial \Omega$ at $z$.  Thus, by applying \eqref{eq approximation} at the value $z\in B_{\rho_1/2}\cap \partial \Omega$ and \eqref{eq app anypoint} for this same $z$ at the value $x-z$ for some $x  \in B_{\rho_1/2}$, we deduce
	\begin{eqnarray}\notag
		\big| (x_n )_+-((x-z) \cdot (-\nu(z)) )_+ \big|&\leq& 	|(x_n )_+-w(x)|+|w(x) - ( (x-z) \cdot (-\nu(z)) )_+ |\\ \label{eq hold gradient}
		&\leq& C\Big(|x|^{1+\alpha}+|z-x|^{1+\alpha}\Big).
	\end{eqnarray}
	
	On top of this, by plugging $z$ in \eqref{eq approximation}, we have that $|z \cdot e_n|= |(z_n)_+|\leq C|z|^{1+\alpha}$ which combined with \eqref{eq hold gradient} yields
	\begin{eqnarray}\label{eq hold gradient 2}
		\big|(x-z)\cdot e_n -((x-z) \cdot (-\nu(z))_+ \big| \leq C\Big(|x|^{1+\alpha}+|z-x|^{1+\alpha}+|z|^{1+\alpha}\Big).
	\end{eqnarray}
	
	On the other hand, in virtue of the flatness property \eqref{eq initial flatness} we have that 
	\begin{equation}\label{eq choosing x}
		\{x \in  B_{\rho_1} | x_n\geq C\e \rho_1 \} \subset \{w> 0\} \quad \mbox{and} \quad |z_n|\leq C\e |z|.
	\end{equation}
	Thus, upon taking $\e_0>0$ small enough in \eqref{eq initial flatness} (and thus shrinking further $\rho_1$), given $y\in \mathbb{S}^{n-1}$ such that $y_n \geq \frac{1}{4}$ we can find $\{x \in  B_{\rho_1} | x_n\geq C\e \rho_1 \}$ satisfying  $|x|\leq C_1 |z|\leq C_2|x|$ such that $y=\frac{x-z}{|x-z|}$. We observe now that in virtue of \eqref{eq choosing x} and since the norms of $|x|$ and $|z|$ are comparable, we have that $|x|\leq C_1 |z-x|\leq C_2|x|$. Additionally, thanks to the  $C^{1,\alpha_0}$ of $\partial \Omega$, upon further shrinking $\rho_1$, we that $(x-z)\cdot  (-\nu(z))>0$. Thus, by incorporating these considerations into \eqref{eq hold gradient 2}, we deduce
	\begin{eqnarray*}
		\left|\frac{(x-z)}{|x-z|}\cdot (e_n -\nu(z)) \right| \leq C|z|^{\alpha},
	\end{eqnarray*}
	which in turn implies from our previous considerations
	\begin{eqnarray}\label{eq holder gradient}
		C\big| (e_n -\nu(z)) \big| = \sup_{ y \in \mathbb{S}^{n-1}, y_n \geq \frac{1}{4}} 	\big|y\cdot (e_n -\nu(z)) \big|  \leq C|z|^{\alpha}.
	\end{eqnarray}
	From here we deduce that $\partial \Omega$ is $C^{1,\alpha}$ in $B_{\rho_1}$. \\
	
	Aiming for \eqref{eq basic regularity dist}, we can exploit the $C^{1,\alpha}$ regularity of $\partial \Omega \cap B_{\rho_1}$ to deduce from Lemma \ref{lemma generalized distance} that $d \in C^{1,\alpha}(\Omega \cap B_{\rho_1})$. On top of this, since $\nabla d(0) =e_n$, we have that $|d(x)-(x_n)_+|\leq C|x|^{1+\alpha}$ for $x\in B_{\rho_1}$. This combined with \eqref{eq approximation} implies
	\begin{equation*}
		|w(x) - d(x)|\leq C|x|^{1+\alpha} \quad \mbox{ $x\in \Omega\cap B_{\rho_1}$.}
	\end{equation*}
	
	From here, we can apply the mean value theorem to the function $t\to t^\frac{2}{2-\gamma}$ and recall the definition of $c_\gamma$ in \eqref{eq coefficient distance} to deduce for $x\in B_{\rho_1}$
	\begin{eqnarray*}
		\left|u(x) - c_\gamma d^\frac{2}{2-\gamma}(x)\right| &=& C|w(x) - d(x)|\int_0^1 \big(tw(x)+(1-t)d(x)\big)^\frac{\gamma}{2-\gamma}\leq C|x|^{\frac{2}{2-\gamma}+\alpha}
	\end{eqnarray*}
	where we have used that $u(x)\leq Cd(x)^\frac{2}{2-\gamma}$ which, in turn, is consequence of \cite[Corollary 1.11]{AP86} combined with the boundary regularity of $\partial \Omega$ nearby 0.\\
	
	Finally, since $w_n(0) =1$, we deduce by continuity $\nabla w$ that $w_n \geq \frac{1}{C} u^\frac{1-\beta}{\beta} \geq  \frac{1}{C} d^\frac{\gamma}{2-\gamma}$ on $\Omega\cap B_{\rho}$ for $\rho$ small enough. This shows \eqref{eq normal bhevior} and finishes the proof.
\end{proof}

\section{Boundary regularity for solutions of the linearized equation $L_\kappa$.}
\label{sec3}

As indicated in the introduction, our analysis relies on understanding the one dimensional solutions of \eqref{eq linearized eq}. That is the purpose of the following lemma.

\begin{lemma}\label{lemma one dimensional problem}
		Let $\kappa \geq -\frac{1}{4}$ and $\theta_\pm$ defined as in \eqref{eq powers}. Given $c_i \in \R$ and $n_i \in \N \cup \{0\}$ for $i=1,2$, with $c_2 =0$ if $\theta_-\leq 0$. Let $y$ be  a solution of 
	\begin{equation}\label{eq one dimensional}
	y''+ \frac{\kappa}{x^2}y=c_1 x^{\theta_++n_1-1}+c_2 x^{\theta_-+n_2-1}\, \mbox{in $(0,\infty)$},
	\end{equation}
	satisfying 
	\begin{equation}\label{1d bdry-cond}
		\begin{array}{rl}
			\displaystyle \lim_{x\to 0^+} \frac{y}{x^{\theta_-}} = 0 & \quad \textrm{if}\quad \kappa>-{\textstyle \frac14} \vspace{1mm} \\
			\displaystyle \lim_{x\to 0^+} \frac{y}{x^{1/2} |\log x|} = 0 & \quad \textrm{if}\quad \kappa=-{\textstyle \frac14} .
		\end{array}
	\end{equation}

Then	
	\begin{equation}
	y(x)= \lambda_1x^{\theta_+}+Ac_1 x^{\theta_++n_1+1}+B c_2 x^{\theta_-+n_2+1}
	\end{equation}
where $ \lambda_1 \in \R$ is any number, and $A$ and $B$ are real valued parameters fully determined by $\kappa, n_1$ and $n_2$.	
\end{lemma}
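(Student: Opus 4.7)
The equation is a second-order linear ODE whose homogeneous part is equidimensional (of Euler type), so the plan is the classical one: compute the two homogeneous solutions, use the growth condition~\eqref{1d bdry-cond} to eliminate the inadmissible branch, and build a particular solution for each forcing term by a monomial ansatz.

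I would first seek homogeneous solutions of the form $y_0=x^\theta$; substitution (after fixing the sign convention consistent with the operator $L_\kappa$ of the introduction) reduces the homogeneous problem to the indicial identity whose roots are precisely the exponents $\theta_\pm$ of~\eqref{eq powers}. When $\kappa>-\tfrac14$ the roots are distinct and $\{x^{\theta_+},x^{\theta_-}\}$ forms a basis of the homogeneous space; when $\kappa=-\tfrac14$ the roots coalesce at $\tfrac12$ and reduction of order produces the second independent solution $x^{1/2}\log x$. The boundary condition~\eqref{1d bdry-cond} is tailored to rule out exactly one branch in each case: the limit $y/x^{\theta_-}\to 0$ forces the $x^{\theta_-}$-coefficient to vanish (while $x^{\theta_+}$ automatically complies because $\theta_+>\theta_-$), and analogously $y/(x^{1/2}|\log x|)\to 0$ rules out $x^{1/2}\log x$ but retains $x^{1/2}=x^{\theta_+}$. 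Hence the admissible homogeneous part of $y$ is $\lambda_1 x^{\theta_+}$ for a free $\lambda_1\in\R$.

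For particular solutions I would try the monomial ansatz $y_1=Ac_1 x^{\theta_++n_1+1}$ for the first forcing term and $y_2=Bc_2 x^{\theta_-+n_2+1}$ for the second. Direct substitution into the ODE produces, in each case, an identity of the form $A\,c_1\, P(\theta_++n_1+1)\, x^{\theta_++n_1-1} = c_1 x^{\theta_++n_1-1}$ (and similarly for $y_2$), where $P(\theta):=\theta(\theta-1)-\kappa$ is the indicial polynomial whose roots are exactly $\theta_\pm$ by Vieta's formulas. The shifted exponents $\theta_\pm+n_i+1$ avoid both roots of $P$: for $y_1$ the shift $n_1+1\geq 1$ already guarantees $\theta_++n_1+1>\theta_+>\theta_-$; for $y_2$, $\theta_-+n_2+1>\theta_-$ trivially, while the resonance $\theta_-+n_2+1=\theta_+$ is excluded by the standing hypothesis ``$c_2=0$ if $\theta_-\leq 0$'' together with the allowed range of $\kappa$. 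Consequently $A=1/P(\theta_++n_1+1)$ and $B=1/P(\theta_-+n_2+1)$ are uniquely determined real numbers depending only on $\kappa,n_1,n_2$, and by superposition $y=\lambda_1 x^{\theta_+}+Ac_1 x^{\theta_++n_1+1}+Bc_2 x^{\theta_-+n_2+1}$ solves~\eqref{eq one dimensional}.

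Uniqueness follows from the linearity of the ODE: any two solutions of~\eqref{eq one dimensional} satisfying~\eqref{1d bdry-cond} differ by a homogeneous solution that also satisfies~\eqref{1d bdry-cond}, which by the first step must be a multiple of $x^{\theta_+}$---a freedom already carried by $\lambda_1$. The main subtlety I anticipate is twofold. First, the critical case $\kappa=-\tfrac14$ requires separate bookkeeping because the two power homogeneous solutions degenerate, but once $x^{1/2}\log x$ is identified as the second solution the rest of the argument goes through verbatim. Second, a resonance $\theta_-+n_2+1=\theta_+$ (equivalent to $\sqrt{1+4\kappa}=n_2+1\in\N$) would make the $y_2$-ansatz coincide with a homogeneous solution and force a logarithmic correction; since the statement asserts $A,B\in\R$, such $\kappa$ are implicitly excluded, consistent with the values of $\kappa$ appearing in the Alt--Phillips application.
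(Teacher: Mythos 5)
Your argument is correct and takes essentially the same route as the paper, which likewise treats the homogeneous part as a Cauchy--Euler equation, discards $x^{\theta_-}$ (resp.\ $x^{1/2}\log x$ when $\kappa=-\tfrac14$) via \eqref{1d bdry-cond}, and obtains the particular solutions by the standard monomial computation that the paper omits ``for brevity.'' The only remark is that the resonance $\theta_-+n_2+1=\theta_+$ you hedge about at the end is already fully excluded by the hypothesis $c_2=0$ when $\theta_-\le 0$: if $c_2\neq0$ then $\theta_->0$, hence $\kappa<0$ and $\theta_+-\theta_-=\sqrt{1+4\kappa}<1\le n_2+1$, so no further implicit restriction on $\kappa$ is needed.
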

\begin{proof}
	The homogeneous ODE associated with \eqref{eq one dimensional} is a standard Cauchy-Euler equation whose solutions are of the form $x^\theta$, with $\theta$ satisfying	
	\begin{equation*}
	\theta(\theta-1)+\kappa=0.
	\end{equation*}
	
	Therefore,	characteristic roots correspond to 
	$$\theta_\pm = \frac{1\pm \sqrt{1+4 \kappa}}{2},$$
	if $\kappa >\frac{-1}{4}$. However, $x^{\theta_-}$ is discarded by the boundary condition \eqref{1d bdry-cond}. On the other hand, if $\kappa = -\frac{1}{4}$, we have multiplicity and thus we must replace $x^{\theta_-}$ by $\ln(x)x^\frac{1}{2}$. This latter solution is also ruled out by \eqref{1d bdry-cond}. Finally, the structure of the particular solutions follows from standard computations in linear ODE theory, so we skip them for the sake of brevity.	
\end{proof}

\subsection{Boundary estimates}

Our goal now is to prove Lemma \ref{lem-comparison} and derive uniform boundary estimates for solutions from it. We start showing an elementary form of the maximum principle for sub/super solutions of $L_\kappa$ with suitable boundary behavior.

\begin{lemma}\label{eq comparison principle}
Let $D \subset \R^n$ be any bounded domain, and let $v, \psi \in C^2(D)$ satisfying the following properties
	\begin{eqnarray}\label{eq supersol}
	L_\kappa v&\leq& 0, \qquad  \mbox{in $D$},\\ \label{eq barrierphi}
	L_\kappa \psi&>&0, \qquad  \mbox{in $D$},\\\notag
	\psi&>&0, \qquad \mbox{in $D$},
\end{eqnarray}
	and such that
	\begin{equation}\label{eq vanishingatboundary}
     \limsup_{x \to \partial D}	\frac{v(x)}{\psi(x)} \leq 0.
	\end{equation}
	
	Then,  $v \leq 0$   in $D$.	
\end{lemma}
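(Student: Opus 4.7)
The natural strategy is the classical change of variable $w := v/\psi$, which turns the ``relative'' maximum principle into an ordinary (strong) maximum principle for an operator with no zeroth-order term. Since $\psi > 0$ in $D$ and $v,\psi \in C^2(D)$, the function $w$ is well-defined and $C^2$ in $D$.

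The first step is to derive the identity
\[
L_\kappa v \;=\; -\psi \, \Delta w \;-\; 2\, \nabla w \cdot \nabla \psi \;+\; w \, L_\kappa \psi \qquad \text{in } D.
\]
This follows by expanding $\Delta(w\psi) = \psi\Delta w + 2\nabla w \cdot \nabla \psi + w\Delta \psi$, multiplying by $-1$, and observing that the Hardy-type zeroth-order contributions $\kappa v/d^2$ and $\kappa w\psi/d^2$ cancel against each other. Combined with \eqref{eq supersol} and \eqref{eq barrierphi}, this gives
\[
-\psi \, \Delta w \;-\; 2\, \nabla w \cdot \nabla \psi \;\leq\; -w \, L_\kappa \psi \qquad \text{in } D.
\]

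The second step is to argue by contradiction, assuming $M := \sup_D w > 0$. Fix any $\varepsilon \in (0, M)$. The boundary condition \eqref{eq vanishingatboundary} says that $\limsup_{x \to \partial D} w(x) \leq 0$, so there exists an open neighborhood $U$ of $\partial D$ (within $\R^n$) such that $w < \varepsilon$ on $U \cap D$. Hence the super-level set $K_\varepsilon := \{x \in D : w(x) \geq \varepsilon\}$ is a closed subset of $D$ contained in $D \setminus U$; since $D$ is bounded, $K_\varepsilon$ is compact and contained in the interior of $D$. By continuity of $w$, the supremum $M$ is attained at some point $x_0 \in K_\varepsilon \subset D$, with $w(x_0) = M > \varepsilon > 0$. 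At such an interior maximum we have $\nabla w(x_0) = 0$ and $\Delta w(x_0) \leq 0$, so the left-hand side of the displayed inequality is $\geq 0$ at $x_0$, while the right-hand side equals $-w(x_0)\, L_\kappa\psi(x_0) < 0$ because $w(x_0) > 0$ and $L_\kappa \psi > 0$ strictly. This contradiction forces $w \leq 0$, hence $v \leq 0$, on $D$.

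There is no substantial obstacle here beyond bookkeeping: the only care needed is in extracting an interior maximizer from the limsup boundary assumption, which is why I pass through a positive level $\varepsilon$ and use that $D$ is bounded to make $K_\varepsilon$ compact. The strict sign in $L_\kappa \psi > 0$ is essential at the contradiction step, since the sign of $w(x_0)\, L_\kappa\psi(x_0)$ must beat the nonnegative term $-\psi(x_0)\,\Delta w(x_0)$.
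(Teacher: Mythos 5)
Your proof is correct and follows essentially the same route as the paper: both arguments locate a positive interior maximum of the quotient $v/\psi$ (using \eqref{eq vanishingatboundary} and boundedness of $D$) and derive a contradiction from the strict inequality $L_\kappa \psi>0$ there. The only cosmetic difference is that you expand the equation satisfied by $w=v/\psi$ and use $\nabla w(x_0)=0$, $\Delta w(x_0)\leq 0$, whereas the paper works directly with $v\leq M\psi$ touching at the maximum point, which gives $-\Delta v\geq -M\Delta\psi$ there; these are equivalent.
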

\begin{proof}
		 Let us notice that by \eqref{eq vanishingatboundary} either $v \leq 0$ in $D$ or  $\frac{v}{\psi}$ attains a positive maximum inside $D$. Arguing by contradiction, we assume that the latter holds. This means that there is $M>0$ and $z\in D$ such that 
		\begin{eqnarray*}
	v(z) &=& M \psi(z),\\
	v &\leq& M\psi, \quad \mbox{in D}.
	\end{eqnarray*}
	
	However, this latter observation yields $-\Delta v(z) \geq - M \Delta \psi(z)$, so we deduce that
	\begin{equation*}
			0\geq 	L_\kappa v(z) \geq M	L_\kappa \psi(z) > 0,
	\end{equation*} 
	a contradiction.
\end{proof}

As a corollary, we derive Lemma \ref{lem-comparison}.

\begin{proof}[Proof Lemma \ref{lem-comparison}]
Let $\beta = 1+\alpha$ so that $\Omega\cap B_1$ is a $C^{\beta}$ domain with $\beta \in (1,2)$. Let $\rho>0$ such that $d<1$ on $ \Omega\cap B_{\rho}$ and consider the function $\phi_1 = d^\frac{1}{2} (-\ln(d))^\frac{1}{2}$, which, thanks to \eqref{eq crit barrier}, satisfies
\begin{eqnarray}\label{eq barrier}
L_\kappa \phi_1 &=& \Big(\frac{1}{4}+\kappa\Big)\frac{ \phi_1}{d^2}+\frac{\phi_1 }{4d_1^{2}(-\ln(d_1))^2}\Big(1 +O(\ln(d)^2d^{\beta-1})\Big).
\end{eqnarray}

Since $d \leq C\rho$ in $B_{\rho}\cap \Omega$, we can choose $\rho$ such that $1 +O(\ln(\rho)^2\rho^{\min\{\beta-1,1\}}) \geq \frac{1}{2}$ in $\Omega \cap B_{\rho}$, depending only on the $C^{\beta}$ norm of $\Omega$. Hence, from \eqref{eq barrier} we deduce
\begin{equation}\label{eq barrier part 1}
	L_\kappa \phi_1 \geq \Big(\frac{1}{4}+\kappa \Big)\frac{ \phi_1}{d^2} +\frac{\phi_1}{d^2(-\ln(d_1))^2}(C-\rho^{\beta-1}(-\ln(\rho))^2)>0 \quad \mbox{ in $ \Omega \cap B_{\rho}$,}
\end{equation}
by shrinking $\rho$ further (if necessary) to make the last term positive. On the other hand, if $\kappa>-\frac{1}{4}$ (in which case we have that $\theta_- < \frac{1}{2}$), we can combine \eqref{eq derivative power} with the identity $\theta_-(\theta_- -1) =\kappa$ to derive the bounds
\begin{equation}\label{eq operator lower power}
	-C d^{\theta_--2}\rho^{\beta-1}\leq L_\kappa (d^{\theta_-})\leq C d^{\theta_--2}\rho^{\beta-1} \quad \mbox{ in $ \Omega \cap B_{\rho}$}.
\end{equation}

Regardless of the value of $\kappa$, we can combine \eqref{eq barrier part 1} and \eqref{eq operator lower power} to deduce that the barrier $\phi_2 = \phi_1+(1/4+\kappa)d^{\theta_-}$ satisfies 
\begin{equation*}
	L_\kappa(\phi_2) > \big(d^{1/2-\theta_-} (-\ln(d))^\frac{1}{2}+O(\rho^{\beta-1}) \big)\frac{ (1/4+\kappa) d^{\theta_-}}{d^2}>0 \quad \mbox{ in $\Omega\cap B_{\rho}$.}
\end{equation*}
Hence, if for some $r\leq \rho$, we have that $v\leq 0$ on $\Omega \cap \partial B_r $, we deduce that
\begin{equation*}
	\limsup_{x \to  \partial(\Omega \cap B_r)}	\frac{v(x)}{\phi_2(x)}\leq  0,
\end{equation*} 
in virtue of the boundary condition \eqref{bdry-cond2}. Finally, the result follows directly from Lemma \ref{eq supersol}.
\end{proof}

We apply now the previous lemma to prove a first step towards Theorem \ref{thm schauder}, which will be improved later on. In virtue of this lemma, we will derive  boundary estimates that will endow us with the required compactness to carry out our blow-up arguments. 

\begin{lemma}\label{lemma basic-boundary regularity}
	Let $\kappa\in[-\frac14,\infty)$, let $\beta \in (1,2)$ and let $\Omega \subset \R^n$ be a $C^{\beta}$ domain.
	 Let $d$ be the regularized distance to $\partial\Omega$, given by Lemma \ref{lemma generalized distance}.	
	 
	Let $v \in C(\Omega\cap B_1)$ be any solution to 
	\begin{equation}\label{eq boundary linearized pde statement}
		-\Delta v + \kappa\,\frac{v}{d^2}=f \quad \text{in}\quad \Omega\cap B_1,
	\end{equation}
	satisfying 
	\begin{equation}\label{bdry-cond statement}
		\begin{array}{rl}
			\displaystyle \lim_{x\to \partial\Omega} \frac{v}{d^{\theta_-}} = 0 & \quad \textrm{if}\quad \kappa>-{\textstyle \frac14} \vspace{1mm} \\
			\displaystyle \lim_{x\to \partial\Omega} \frac{v}{d^{1/2} |\log d|} = 0 & \quad \textrm{if}\quad \kappa=-{\textstyle \frac14},
		\end{array}
	\end{equation}
	where $f \in C(\Omega \cap B_1)$ satisfies
	\begin{equation}\label{eq boundary behavior f}
			f d^{2-\theta_+ -\e} \in L^\infty(\Omega \cap B_1 )
	\end{equation} 
for some $\varepsilon>0$.\\
	%$|v|\leq M d^{\theta_+}$ in $\Omega$ with $\theta_+:=\frac{1+\sqrt{1-4\kappa}}{2}$. 
	Then, there exists $\rho \in (0,1)$ such that
		\begin{equation}\label{eq boundary estimate}
			|v| \leq C\Big(\Vert v\Vert_{L^\infty(\Omega \cap B_1)}+ \Vert f \phi^{2-\theta_+ -\e} \Vert_{L^\infty(\Omega \cap B_1)}\Big)d^{\theta_+},\,\, \mbox{  in $\Omega \cap B_\rho$,}
\end{equation}
 where $\phi$ is the barrier constructed in Lemma \ref{lemma distance barrier} and $\rho$ depends only on $\beta$, $\varepsilon$ and $n$.\\
	
	Furthermore, if $\kappa <0$, then $v\in C^{\theta_+}(\overline\Omega\cap B_{1/2})$ and 
	\begin{equation}\label{eq holder estimate} 
	[v]_{\theta^+, \Omega \cap B_\rho }\leq C\Big(\Vert v\Vert_{L^\infty(\Omega \cap B_1)}+ \Vert  f \phi^{2-\theta_+ -\e}\Vert_{L^\infty(\Omega \cap B_1)}\Big).
\end{equation} 

Whilst, if $\kappa \geq 0$, we have the Lipschitz estimate 
	\begin{equation}\label{eq lip estimate} 
	[v]_{1, \Omega \cap B_\rho }\leq C\Big(\Vert v\Vert_{L^\infty(\Omega \cap B_1)}+ \Vert  f \phi^{2-\theta_+ -\e}\Vert_{L^\infty(\Omega \cap B_1)}\Big).
\end{equation} 
\end{lemma}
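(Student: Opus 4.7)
The plan is to combine an explicit barrier with the comparison principle of Lemma \ref{lem-comparison}, and then upgrade the resulting bound to the optimal decay rate $d^{\theta_+}$ via a scaling argument. The key computation rests on the algebraic identity $\kappa=\theta_+(\theta_+-1)$, combined with Lemma \ref{lemma distance barrier}(4), which controls $L_\kappa$ applied to powers of $\phi$.

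For $\kappa>-\tfrac14$, I would take $\psi_\varepsilon:=\phi^{\theta_+-\varepsilon/2}$. A direct computation using Lemma \ref{lemma distance barrier}(4), the relation $\phi\asymp d$ from \eqref{eq equivalence phi}, and $\kappa=\theta_+(\theta_+-1)$ yields
\[
L_\kappa\,\phi^{\theta_+-\varepsilon/2}=\tfrac{\varepsilon}{2}\bigl(2\theta_+-1-\tfrac{\varepsilon}{2}\bigr)\,\phi^{\theta_+-\varepsilon/2-2}+O\bigl(|x|^{\theta_+-\varepsilon/2-2+\alpha}\bigr).
\]
The leading coefficient is positive because $2\theta_+-1=\sqrt{1+4\kappa}>0$ and $\varepsilon$ is small; shrinking $\rho$ absorbs the error to give $L_\kappa\psi_\varepsilon\geq c_0\,\varepsilon\,\phi^{\theta_+-\varepsilon/2-2}$ in $\Omega\cap B_\rho$. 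At criticality $\kappa=-\tfrac14$ this coefficient vanishes, so I would replace $\psi_\varepsilon$ by the logarithmic variant $\phi^{1/2}|\log\phi|^{1/2}$ from \eqref{eq crit barrier}, which provides a strictly positive supersolution contribution at the critical rate, exactly as in the proof of Lemma \ref{lem-comparison}.

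Scaling $\psi_\varepsilon$ by a constant of order $CM/\varepsilon$ (with $M$ equal to the right-hand side of \eqref{eq boundary estimate}), the resulting supersolution absorbs $|f|\leq CM\,\phi^{\theta_+-2+\varepsilon}$, since $\phi^{\theta_+-\varepsilon/2-2}$ is more singular than $\phi^{\theta_+-2+\varepsilon}$ on $\{\phi\leq 1\}$. The hypothesis \eqref{bdry-cond statement} ensures $\limsup_{x\to\partial\Omega}(\pm v-\psi)/d^{\theta_-}\leq 0$, so Lemma \ref{lem-comparison} applied to $\pm v-\psi$ in nested subdomains $\Omega\cap B_r$, using the bound $|v|\leq\|v\|_\infty$ on the lateral boundary $\Omega\cap\partial B_r$, yields a preliminary pointwise estimate of the form $|v|\leq CM\,\phi^{\theta_+-\varepsilon/2}$ in $\Omega\cap B_\rho$.

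To upgrade to the sharp bound $|v|\leq CM\,d^{\theta_+}$ of \eqref{eq boundary estimate}, I would rescale near the boundary: for $x_0\in\Omega\cap B_{\rho/2}$ with $r:=d(x_0)$ small, define $\tilde v(y):=v(x_0+ry)/r^{\theta_+}$ on $B_{1/2}(0)$, which solves a uniformly elliptic equation with bounded coefficient $\kappa/\tilde d^2$ (using $\tilde d\in[1/2,3/2]$) and bounded source $\tilde f=O(Mr^\varepsilon)$. A blow-up along a sequence $r_k\to 0$ produces in the limit a bounded solution of $L_\kappa w=0$ on a half-space, which by Lemma \ref{lemma one dimensional problem} (combined with separation of variables and a Liouville-type argument) must be a multiple of $y_n^{\theta_+}$; the resulting rigidity bounds $\|\tilde v\|_{L^\infty(B_{1/4})}\leq CM$, giving \eqref{eq boundary estimate}. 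The H\"older estimate \eqref{eq holder estimate} (when $\kappa<0$ and $\theta_+<1$) and the Lipschitz estimate \eqref{eq lip estimate} (when $\kappa\geq 0$ and $\theta_+\geq 1$) then follow by applying interior Schauder estimates to $\tilde v$ at the appropriate order and rescaling back, combined with the boundary decay just obtained. The main obstacle is carrying out the blow-up cleanly so as to remove the loss $\phi^{-\varepsilon/2}$ in the preliminary estimate, which hinges on the half-space classification supplied by Lemma \ref{lemma one dimensional problem}.
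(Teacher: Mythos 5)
Your barrier computation and the use of Lemma \ref{eq comparison principle}/Lemma \ref{lem-comparison} are fine as far as they go, but the choice $\psi_\varepsilon=\phi^{\theta_+-\varepsilon/2}$ (and $\phi^{1/2}|\log\phi|^{1/2}$ at $\kappa=-\tfrac14$) only yields the lossy bound $|v|\leq CM\,d^{\theta_+-\varepsilon/2}$ (resp.\ $CM\,d^{1/2}|\log d|^{1/2}$), and the step you propose to remove this loss is where the argument genuinely breaks down. First, the rescalings $v(x_0+ry)/r^{\theta_+}$ are \emph{not} uniformly bounded under the preliminary estimate (they are only $O(Mr^{-\varepsilon/2})$), so compactness fails unless you normalize differently; and if you do normalize, the blow-up limit being a multiple of $y_n^{\theta_+}$ is \emph{not} a contradiction --- $v$ is in general exactly of size $d^{\theta_+}$ --- so ``rigidity'' does not bound $\|\tilde v\|_{L^\infty}$ by $CM$ without first subtracting the $d^{\theta_+}$-part (this is precisely the least-squares/orthogonality device the paper introduces later, in Theorem \ref{theorem initial reg linear}). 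Second, the half-space classification you invoke is not available at this point: Lemma \ref{lemma one dimensional problem} is purely one-dimensional, and the genuine $n$-dimensional Liouville theorem (Lemma \ref{lemma general Liouville}) is proved in the paper \emph{using} the H\"older estimate of the present lemma, so appealing to it (or to an unproved ``separation of variables plus Liouville'' argument) here is circular.

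The paper avoids all of this by choosing the barrier with leading term at the \emph{sharp} power: it compares $\pm v$ (after normalizing by $\Vert v\Vert_{L^\infty}+\Vert f\phi^{2-\theta_+-\varepsilon}\Vert_{L^\infty}$) with a combination of $\phi^{\theta_+}$ and the \emph{higher} power $\phi^{\theta_++\varepsilon}$, where $\varepsilon<\beta-1$. Since $\kappa-(\theta_++\varepsilon)(\theta_++\varepsilon-1)=-\varepsilon(2\theta_+-1+\varepsilon)$, the correction term is a strict sub/supersolution at order $\phi^{\theta_++\varepsilon-2}$, which absorbs both the right-hand side $|f|\leq M\phi^{\theta_+-2+\varepsilon}$ and the error $O(\rho^{\beta-1}\phi^{\theta_+-2})$ coming from $L_\kappa\phi^{\theta_+}$ after shrinking $\rho$; the comparison principle then gives $|v|\leq CM\,d^{\theta_+}$ directly, with no loss and no blow-up. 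If you want to salvage your route, replace $\phi^{\theta_+-\varepsilon/2}$ by this $\phi^{\theta_+}\pm\phi^{\theta_++\varepsilon}$ barrier; your final step (interior Schauder estimates at scale $d(x_0)$ plus a covering argument to get \eqref{eq holder estimate} and \eqref{eq lip estimate}) then matches the paper's and is correct once the sharp decay \eqref{eq boundary estimate} is in hand.
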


\begin{proof}
	Let $\rho \in (0,1/2)$ to be determined later in the proof and let $\phi$ be the barrier constructed in Lemma \ref{lemma distance barrier}. Thanks to \eqref{eq derivative power phi}, we can use the identity $(\theta_+-1)\theta_+ =\kappa$ as in \eqref{eq operator lower power} to deduce
\begin{equation}\label{eq operator higher power}
	|L_\kappa (\phi^{\theta_+})|\leq C \phi^{\theta_+-2}\rho^{\beta-1}
\end{equation}
in $B_{2\rho} \cap \Omega$.\\

Let $\e \in (0,\beta-1)$ such that \eqref{eq boundary behavior f} holds. Since $(\theta_++\e-1)(\theta_++\e)= \kappa+\e(2\theta_+-1+\e)$ and $\theta_+ \geq \frac{1}{2}$, we can proceed as in \eqref{eq operator higher power} to get
\begin{equation}\label{eq extra power barrier}
 \Big|	L_\kappa (\phi^{\theta_++\e})+\e(2\theta_+-1+\e) \phi^{\theta_++\e-2}\Big| \leq C\phi^{\theta_++\e-2}\rho^{\beta-1},
\end{equation}
$B_{2\rho} \cap \Omega$. \\

So, we can fix $\rho >0$ small enough depending on $\e$ and $\beta$ such that for some $\eta>0$
\begin{equation}\label{eq sign barrier}
	L_k \Big(\phi^{\theta_++\e}- \phi^{\theta_+}\Big) \leq -\eta \phi^{\theta^++\e-2}
\end{equation}
in $B_{2\rho} \cap \Omega$. Thus, after taking $\phi$ subordinated to $\rho$, we also have from \eqref{eq boundary distance phi} that
\begin{equation}\label{eq lower bound boundary ball}
\phi^{\theta_++\e}- \phi^{\theta_+}\geq \eta \mbox{ on $\Omega \cap \partial B_{2\rho}$}.
\end{equation}

Let us divide both sides of \eqref{eq boundary linearized pde statement} by 
$$A=\frac{\Vert v \Vert_{L^\infty(\Omega \cap B_1)}+\Vert \phi^{2-\theta_+ -\e} f \Vert_{L^\infty(\Omega \cap B_1)}}{\eta},$$
and let us notice that, by linearity, $v_1 =\frac{1}{A} v$ solves \eqref{eq boundary linearized pde statement} with right hand side $f_1 =\frac{1}{A}f$ and with the same boundary condition \eqref{bdry-cond statement}. Furthermore, by construction $\Vert v_1 \Vert_{L^\infty(\Omega \cap B_1)}\leq \eta$ and $|f_1|\leq \eta \phi^{\theta_+ +\e -2}$ in $\Omega \cap B_1$. \\

Thus, the function $u =v_1 -\Big(d^{\theta_+} +d^{\theta_+ +\e}\Big)$ satisfies the hypotheses of Lemma \ref{lem-comparison}, implying that $v_1 \leq \phi^{\theta_+} -\phi^{\theta_+ +\e}$ in $\Omega \cap B_{\rho}$. By applying the previous argument to  $-v$ and combining it with \eqref{eq equivalence phi}  we derive \eqref{eq boundary estimate}.\\

We complete the proof deducing \eqref{eq holder estimate}. Let us consider first the case when $\kappa <0$. Given $x_0 \in \Omega$ with $\text{dist}(x_0,\partial \Omega)= 2r<\rho$, we can combine  \eqref{eq boundary estimate} with interior Schauder estimates to deduce	
\begin{eqnarray*}
	[ v ]_{C^{\theta_+}(B_\frac{r}{2}(x_0))}&\leq& C\Big( \frac{1}{r^{\theta_+}} \Vert v\Vert_{L^\infty(B_r(x_0))}+ r^{2-\theta_+} \left\| \frac{\kappa v}{d^2}-f\right\|_{L^\infty(B_r(x_0))}\Big)\\
	&\leq& C\Big(\Vert v\Vert_{L^\infty(\Omega \cap B_1)}+ \Vert \phi^{2-\theta_+ -\e} f \Vert_{L^\infty(\Omega \cap B_1)}\Big).
\end{eqnarray*}

The estimate \eqref{eq holder estimate} follows from the previous inequality combined with a covering argument. See, e.g., Corollary 3.5 and Lemma B.2 in \cite{Kuk22}. In the case $\kappa \geq 0$, we can argue analogously to derive \eqref{eq lip estimate} see, e.g., \cite[Corollary 3.5]{Kuk22}.
\end{proof}

\begin{lemma}\label{lemma uni approx poly} 
	Let $M\in \N$, $\kappa \geq -\frac14$, $\beta \in (1,2)$, and let $\Omega \subset \R^n$ be a $C^{\beta}$ domain. Let $d$ be the regularized distance to $\partial\Omega$, given by Lemma \ref{lemma generalized distance}. Let $v \in C(\Omega\cap B_1)$ be a solution of
	\begin{equation}\label{eq compactness}
			L_\kappa (v)(x) =  \big(R_1(x)+g_1(x)\big)\left(\frac{d(r x)}{r}\right)^{\theta_+-1}+\big(R_2(x)+g_2(x)\big)\left(\frac{d(r x)}{r}\right)^{\theta_--1}\quad \textrm{in}\quad  {\textstyle\frac{1}{r}}\Omega\cap B_1,
	\end{equation}
with  $R_i \in \mathbf{P}_{M}$, and $g_i\in L^\infty({\textstyle\frac{1}{r}}\Omega\cap B_1)$ for $i=1,2$ and with $R_2=g_2=0$ if $\theta_- \leq0$. Then, there exist $\rho_0>0$ and $C>0$ only depending on $M$ and $\beta$ such that 
 then  for any $r \in (0,\rho_0)$,	
	\begin{equation}\label{eq polynomial bound}
		\Vert R_i\Vert_{L^\infty\big({\textstyle\frac{1}{r}}\Omega\cap B_1\big)}\leq C\big(	\Vert g_1\Vert_{L^\infty\big({\textstyle\frac{1}{r}}\Omega\cap B_1\big)}+\Vert g_2\Vert_{L^\infty\big({\textstyle\frac{1}{r}}\Omega\cap B_1\big)}+\Vert v\Vert_{L^\infty\big({\textstyle\frac{1}{r}}\Omega\cap B_1\big)}\Big),
	\end{equation}
for $i=1,2$.
\end{lemma}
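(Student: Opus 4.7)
The plan is to prove Lemma~\ref{lemma uni approx poly} by a blow-up/compactness argument, using Lemma~\ref{lemma basic-boundary regularity} for compactness of the solutions and Lemma~\ref{lemma one dimensional problem} for rigidity of the limiting polynomial data. Assuming by contradiction that the estimate fails, I would extract a sequence of scales $r_k \to 0$ and triples $(v_k, R_{i,k}, g_{i,k})$ solving \eqref{eq compactness} on $\frac{1}{r_k}\Omega \cap B_1$ with $\|v_k\|_{L^\infty} + \|g_{1,k}\|_{L^\infty} + \|g_{2,k}\|_{L^\infty} \leq 1$ but $M_k := \|R_{1,k}\|_{L^\infty} + \|R_{2,k}\|_{L^\infty} \to \infty$. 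Setting $\tilde v_k := v_k/M_k$, $\tilde R_{i,k} := R_{i,k}/M_k$, $\tilde g_{i,k} := g_{i,k}/M_k$, linearity of $L_\kappa$ preserves the equation, and the rescaled quantities satisfy $\tilde v_k \to 0$ and $\tilde g_{i,k} \to 0$ uniformly while $\|\tilde R_{1,k}\|_{L^\infty} + \|\tilde R_{2,k}\|_{L^\infty} = 1$.

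\textbf{Blow-up.} Lemma~\ref{lemma generalized distance} together with the $C^{\beta}$ regularity of $\partial\Omega$ ensures that the rescaled regularized distances $d_k(x) := d(r_k x)/r_k$ converge to $x_n$ locally uniformly on $\{x_n > 0\}$ and that the rescaled domains $\frac{1}{r_k}\Omega \cap B_1$ Hausdorff-converge to the half-ball $B_1 \cap \{x_n > 0\}$. Since $\mathbf{P}_M$ is finite-dimensional and each rescaled domain eventually contains any fixed compact subset of $B_1 \cap \{x_n>0\}$, a subsequence of $\tilde R_{i,k}$ converges in every polynomial norm to a pair $R_i^\infty \in \mathbf{P}_M$ with $\|R_1^\infty\|_{L^\infty(B_1 \cap \{x_n>0\})} + \|R_2^\infty\|_{L^\infty(B_1 \cap \{x_n>0\})} \geq c > 0$. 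Meanwhile, the right-hand side of \eqref{eq compactness} applied to $\tilde v_k$ is uniformly bounded by $C\,d_k^{\theta_+-1} + C\,d_k^{\theta_--1}$, so Lemma~\ref{lemma basic-boundary regularity} (with a suitable $\e$) yields uniform $C^{\theta_+}$ bounds—Lipschitz when $\kappa \geq 0$—for $\tilde v_k$ up to the flattened boundary. Arzel\`a-Ascoli produces a further subsequence with $\tilde v_k \to v_\infty$ locally uniformly, and $\|\tilde v_k\|_{L^\infty} \leq 1/M_k \to 0$ forces $v_\infty \equiv 0$.

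\textbf{Rigidity and contradiction.} Passing to the limit in \eqref{eq compactness} gives the pointwise identity
\[
R_1^\infty(x)\, x_n^{\theta_+-1} + R_2^\infty(x)\, x_n^{\theta_--1} = 0 \quad \text{in } B_1 \cap \{x_n > 0\}.
\]
By the lemma's convention, $R_2^\infty \equiv 0$ unless $\theta_- > 0$, i.e.\ $\kappa \in [-\tfrac14, 0)$; in the non-critical range $\kappa \in (-\tfrac14, 0)$ we have $\theta_+ - \theta_- = \sqrt{1+4\kappa} \in (0,1)$, which is never a positive integer. Fixing $x' \in \R^{n-1}$ and expanding $R_i^\infty(x',x_n)$ in monomials in $x_n$ produces a finite linear combination of powers $x_n^{\theta_+-1+j}$ and $x_n^{\theta_--1+j}$ whose linear independence (compatible with Lemma~\ref{lemma one dimensional problem}) forces every monomial coefficient of $R_1^\infty$ and $R_2^\infty$ to vanish, contradicting the normalization. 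When $\theta_- \leq 0$, the identity reduces to $R_1^\infty\, x_n^{\theta_+-1} \equiv 0$ and the contradiction is immediate.

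\textbf{Main obstacle.} The technical core is ensuring uniform applicability of Lemma~\ref{lemma basic-boundary regularity} along the rescaled sequence: its constants, its threshold $\rho$, and the $C^{\beta}$ norm of the rescaled boundary must all stay bounded independently of $k$. This is inherited from the scale-invariance built into the convolution defining the regularized distance in Lemma~\ref{lemma generalized distance}, together with the fact that rescaling a $C^{\beta}$ graph by $1/r_k$ with $r_k \to 0$ only improves its $C^{\beta}$ seminorm. A secondary obstacle is the critical case $\kappa = -\tfrac14$, where $\theta_+ = \theta_-$ and the rigidity step collapses to $R_1^\infty + R_2^\infty = 0$; here one must invoke the second independent solution $x^{1/2}\log x$ from Lemma~\ref{lemma one dimensional problem}, either through a refined blow-up that resolves the two contributions at distinct logarithmic rates or through a convention that collapses one of the two polynomials.
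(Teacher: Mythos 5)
Your argument is essentially the paper's: assume the bound fails, normalize by the total polynomial norm so that the rescaled solution and the $g_i$ tend to zero while the polynomials have unit norm, blow up (rescaled regularized distance $\to x_n$), pass to the limit, and contradict the linear independence of $x_n^{\theta_+-1}$ and $x_n^{\theta_--1}$ using $\theta_+-\theta_-\notin\N$ in the range where $R_2$ is allowed to be nonzero. One step of yours is not justified, though it turns out to be dispensable: you invoke Lemma \ref{lemma basic-boundary regularity} to get uniform $C^{\theta_+}$ (or Lipschitz) bounds for $\tilde v_k$ up to the boundary, but that lemma requires the Dirichlet-type condition \eqref{bdry-cond statement}, which is \emph{not} among the hypotheses of Lemma \ref{lemma uni approx poly} (here $v$ is only assumed continuous and a solution). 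Fortunately no equicontinuity is needed: since $\|\tilde v_k\|_{L^\infty}\le 1/M_k\to 0$, the sequence converges uniformly to zero, and stability of viscosity solutions under locally uniform convergence of solutions and coefficients (the route the paper takes) already yields the limit identity $R_1^\infty x_n^{\theta_+-1}+R_2^\infty x_n^{\theta_--1}=0$; you should simply delete the appeal to Lemma \ref{lemma basic-boundary regularity} and argue the passage to the limit this way. Your remark on the critical case $\kappa=-\tfrac14$ is well taken: there $\theta_+=\theta_-$ and the two powers are no longer independent, so the contradiction only gives $R_1^\infty=-R_2^\infty$; the paper resolves this not inside this lemma but by the convention, imposed in the companion Lemmas \ref{lemma second polynomial scalinginv bounds}--\ref{lemma growth property} and in the applications, that the second polynomial is taken to vanish when $\kappa=-\tfrac14$, which is exactly the "convention that collapses one of the two polynomials" you anticipate.
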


\begin{proof}
	Arguing by contradiction, we assume sequences of domains $\Omega_l$ with $\Vert \partial \Omega_l \cap B_2\Vert_{C^\beta}\leq 1$, a sequence  of radii $\{r_l\}_{l\in \N}$ with $r_l\to 0$, and of polynomials $\{R_i^l\}_{l\in \N}$, functions $g_i^l \in L^\infty\big( \frac{1}{r_l}\Omega \cap B_1\big)$ for $i=1,2$ and solutions $u_l$ to \eqref{eq compactness} such that \eqref{eq polynomial bound} does not hold. Thus, by dividing both sides of \eqref{eq compactness} and \eqref{eq polynomial bound} by $\Vert R_1^l\Vert_{L^\infty\big( \frac{1}{r_l}\Omega \cap B_1\big)}+	\Vert R_2^l\Vert_{L^\infty\big( \frac{1}{r_l}\Omega \cap B_1\big)}$ we obtain a sequence of functions $\{\tilde{v}_l\}$ satisfying
	\begin{equation*}
	 	L_\kappa (\tilde{v}_l)(x) =  \Big(\tilde{R}_1^l(x)+\tilde{g}_1^l(x)\Big)\Big(\frac{d_l(r x)}{r}\Big)^{\theta_+-1}+\Big(\tilde{R}_2^l(x)+\tilde{g}_2^l(x)\Big)\Big(\frac{d_l(r x)}{r}\Big)^{\theta_--1},\quad \text{in}\,\, \frac{1}{r_l}\Omega \cap B_1,
	\end{equation*}
with  $\Vert \tilde{g}_i^l\Vert_{L^\infty(\Omega \cap B_\rho)} \to 0$ as $l\to \infty$ for $i=1,2$,
\begin{equation}\label{eq vanishing ul}
	\limsup_{l\to \infty} \Vert u_l\Vert_{L^\infty(B_1\cap \frac{1}{r_l}\Omega)}\to 0,
\end{equation}
and
	$$\Vert \tilde{R}_1^l\Vert_{L^\infty\big( \frac{1}{r_l}\Omega \cap B_1\big)}+	\Vert \tilde{R}_2^l\Vert_{L^\infty\big( \frac{1}{r_l}\Omega \cap B_1\big)} =1.$$
	
	This last bound on the polynomials implies, bearing in mind that $\mathbf{P}_{M}$ is a finite dimensional space, that, up to substracting a further subsequence, $\tilde{R}_i^l \to R^*_i$ for $i=1,2$ with at least one $Q_i^*\neq 0$. On the other hand, in virtue of Lemma \ref{lemma generalized distance}, we have that, up to subsequences,
	\begin{equation*}
		\frac{d_l(r_l x)}{r_l}\to x_n,
	\end{equation*}
locally uniformly in $\R^n$ -by extending $d$ by zero outside $\Omega \cap B_1$. Therefore, by the stability of viscosity solutions under locally uniform convergence, we deduce that for $\lim_{l\to \infty} u_l=u_*=0$
\begin{equation*}
0=	\Delta u_* - 	\frac{\tilde{C_\gamma}}{x_n^2}u = Q_1^* x_n^{\theta_+-1}+Q_2^* x_n^{\theta_--1},
\end{equation*}
which yields a contradiction since at least one of the $Q_i^*$ is not zero and since $\theta_+ -\theta_- \notin \N$. 	
\end{proof}

We prove now a Liouville-type theorem. 

\begin{lemma}\label{lemma general Liouville}
	Let $M \in \N$, $\kappa \geq -\frac{1}{4}$, $\theta_\pm$ as in \eqref{eq powers}, and $\beta >1$ with $\beta \notin \N$. Let $Q_i \in \mathbf{P}_{M}$ for $i=1,2$ with $Q_2=0$ if $\theta_- \leq0$. If $u$ is such that	
	\begin{equation}\label{eq Liouville 2}
		\begin{cases}
			L_\kappa u = Q_1(x)x_n^{\theta_+-1}+Q_2(x)x_n^{\theta_--1} \quad \textrm{in}  \quad \{x_n>0\}\\
			\Vert u\Vert_{L^\infty(B_R)} \leq CR^{{\theta_+}+\beta-1}.
		\end{cases}
	\end{equation}
	
 Let also us assume that $u$ satisfies one of the following conditions 
	\begin{enumerate}
		\item either	
		\begin{equation}\label{eq lower gamma liouville}
			\begin{array}{rl}
				\displaystyle	\lim_{x_n \to 0^+}	\frac{u(x',x_n)}{x_n^{\theta_-}} = 0,& \quad \textrm{if}\quad \kappa>-{\textstyle \frac14} \vspace{1mm} \\
				\displaystyle 	\lim_{x_n \to 0^+}	\frac{u(x',x_n)}{ \ln(x_n) x_n^\frac{1}{2}} = 0, & \quad \textrm{if}\quad \kappa=-{\textstyle \frac14} .
			\end{array}
		\end{equation}
		
		\item or 
		\begin{equation}\label{eq higher gamma liouville} 
			\lim_{x_n \to 0^+}		\frac{u(x',x_n)}{x_n^{\theta_+}} = 0.
		\end{equation}
	\end{enumerate}	
	
	Then, $ u = P_1(x)x_n^{\theta_+}+ P_2(x)x_n^{\theta_-+1}$,  where $P_1 \in \mathbf{P}_{\lfloor \beta \rfloor -1 }$ and  $P_2 \in \mathbf{P}_{\lfloor \beta +\theta_+-\theta- \rfloor -2 }$. Furthermore, if \eqref{eq lower gamma liouville}  holds, then    $P_2=0$ whenever $Q_2=0$. On the other hand, if \eqref{eq higher gamma liouville}  holds, $P_1=0$ whenever $Q_1=0$.
\end{lemma}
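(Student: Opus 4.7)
The plan is to (i) subtract an explicit polynomial-type particular solution of the form sought in the conclusion so as to reduce to the homogeneous equation, (ii) establish a blow-down Liouville theorem for $L_\kappa$-harmonic functions in the half-space with sub-$\theta_+$ growth via a rescaling argument based on Lemma \ref{lemma basic-boundary regularity}, and (iii) apply it to tangential derivatives of the remainder and conclude via the one-dimensional classification in Lemma \ref{lemma one dimensional problem}.

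For step (i), I would construct $u_\ast = \tilde P_1(x) x_n^{\theta_+} + \tilde P_2(x) x_n^{\theta_-+1}$ with polynomials $\tilde P_i$ of degree controlled by $M$ such that $L_\kappa u_\ast = Q_1 x_n^{\theta_+-1} + Q_2 x_n^{\theta_--1}$. The key one-variable identity is that $L_\kappa\bigl(x'^{\mu'} x_n^{\mu_n + \theta_+ + 1}\bigr)$ produces a resonant term $-(\mu_n + 1)(\mu_n + 2\theta_+)\, x'^{\mu'} x_n^{\mu_n + \theta_+ - 1}$ together with a tangential error $-\Delta_{x'}(x'^{\mu'}) x_n^{\mu_n + \theta_+ + 1}$ of higher total $x_n$-order and lower $x'$-degree; expanding $Q_1$ monomial by monomial and performing descending induction on the tangential degree (and analogously on the $\theta_-$-branch, where no resonance occurs by Lemma \ref{lemma one dimensional problem}) determines the coefficients of $\tilde P_1, \tilde P_2$. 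Setting $v := u - u_\ast$ yields $L_\kappa v = 0$ in $\{x_n>0\}$, the same boundary condition (one checks directly that the monomials produced in $u_\ast$ are $o(x_n^{\theta_-})$ in case (1), respectively $o(x_n^{\theta_+})$ in case (2), using in the latter case the constraint $\mu_n \geq 1$ for every $\theta_+$-branch monomial together with $\theta_- + 1 > \theta_+$ when $\kappa < 0$), and growth $|v(x)| \leq C(1+|x|)^{\theta_+ + \beta - 1}$.

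The core is the following homogeneous Liouville: if $L_\kappa w = 0$ in $\{x_n > 0\}$, $w$ satisfies \eqref{eq lower gamma liouville}, and $\|w\|_{L^\infty(B_R)} \leq C R^s$ with $s < \theta_+$, then $w \equiv 0$. Rescaling $w_R(x) := R^{-s} w(Rx)$ produces a uniformly bounded $L_\kappa$-harmonic function on $B_2^+$ that inherits the boundary condition by scale invariance; Lemma \ref{lemma basic-boundary regularity} applied on the half-space yields $|w_R(x)| \leq C x_n^{\theta_+}$ on $B_1^+$, which unscales to $|w(y)| \leq C R^{s-\theta_+}\, y_n^{\theta_+}$ and forces $w \equiv 0$ upon $R \to \infty$. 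An analogous statement with threshold $s < \theta_- + 1$ holds under \eqref{eq higher gamma liouville}, the stronger boundary condition now ruling out the $x_n^{\theta_+}$ mode that appears in Lemma \ref{lemma basic-boundary regularity}. Since $L_\kappa$ commutes with $\partial_{x_i}$ for $i < n$, interior Schauder plus scaling shows that each $\partial^\alpha_{x'} v$ is $L_\kappa$-harmonic, satisfies the same boundary condition, and grows at most like $R^{\theta_+ + \beta - 1 - |\alpha|}$. Taking $|\alpha| \geq \lfloor \beta \rfloor$ (recall $\beta \notin \mathbb{N}$) forces this exponent strictly below $\theta_+$, so $\partial^\alpha_{x'} v \equiv 0$ and $v$ is a polynomial in $x'$ of degree at most $\lfloor \beta \rfloor - 1$.

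Finally, write $v(x',x_n) = \sum_{|\alpha| \leq \lfloor \beta \rfloor - 1} x'^\alpha\, v_\alpha(x_n)$ and match coefficients of $x'^\alpha$ in $L_\kappa v = 0$. Ordering by decreasing $|\alpha|$, each $v_\alpha$ satisfies a 1D equation of the form \eqref{eq one dimensional} whose source is determined by the already-known $v_\beta$ with $|\beta| > |\alpha|$ via $\Delta_{x'}$, together with the boundary condition \eqref{1d bdry-cond}. Lemma \ref{lemma one dimensional problem} then classifies $v_\alpha$ as a combination of $x_n^{\theta_+ + k}$ and $x_n^{\theta_- + 1 + k}$ modes, and reassembling yields $v = P_1(x) x_n^{\theta_+} + P_2(x) x_n^{\theta_- + 1}$, which merges with $u_\ast$ into the final form. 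The sharper statements $P_2 \equiv 0$ when $Q_2 \equiv 0$ under (1), and $P_1 \equiv 0$ when $Q_1 \equiv 0$ under (2), are automatic because in those scenarios the only admissible 1D solutions at each stage lie on a single branch. The main technical obstacle I anticipate is the precise bookkeeping of polynomial degrees through this iterated 1D construction, particularly for $P_2$: the exponent gap $\theta_+ - \theta_- = \sqrt{1+4\kappa}$ enters the growth budget directly, and tracking how it redistributes among the tangential degree in $x'$ and the $x_n$-exponent shift of Lemma \ref{lemma one dimensional problem} is what produces the sharp bound $\lfloor \beta + \theta_+ - \theta_- \rfloor - 2$.
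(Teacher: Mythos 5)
Your overall architecture (show polynomial dependence on $x'$, then classify the $x_n$-profiles via Lemma \ref{lemma one dimensional problem} by descending induction on the tangential degree) is the same as the paper's, but the step where you pass from $v$ to its tangential derivatives contains a genuine gap. You claim that ``interior Schauder plus scaling'' gives $\Vert \partial^\alpha_{x'} v\Vert_{L^\infty(B_R)}\leq CR^{\theta_++\beta-1-|\alpha|}$ and that $\partial^\alpha_{x'}v$ inherits the boundary condition \eqref{eq lower gamma liouville}. Interior estimates only give $|\partial^\alpha_{x'}v(x)|\leq C\,x_n^{-|\alpha|}\sup_{B_{x_n/2}(x)}|v|\leq C R^{\beta-1}x_n^{\theta_+-|\alpha|}$ near $\{x_n=0\}$, which blows up as $x_n\to0^+$ once $|\alpha|>\theta_+$; neither the $L^\infty(B_R)$ growth bound up to the boundary nor the limit $\partial^\alpha_{x'}v/x_n^{\theta_-}\to0$ is available, and in fact boundedness of tangential derivatives up to $\{x_n=0\}$ is essentially equivalent to the structure you are trying to prove, so the step is circular. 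This is precisely the difficulty the paper's proof is built around: instead of derivatives it uses tangential difference quotients $u_1(x)=\big(u(x+\tau h)-u(x)\big)/h^{\theta_+}$, which are bounded up to the boundary thanks to the uniform $C^{\theta_+}$ (or Lipschitz) estimate of Lemma \ref{lemma basic-boundary regularity}, iterates them finitely many times until the growth exponent $\beta-1-k\theta_+$ becomes negative so the $k$-th quotient vanishes, and then integrates back to obtain $u(x',x_n)=\sum_\lambda C_\lambda(x_n)(x')^\lambda$. Your blow-down Liouville for sub-$\theta_+$ growth (rescale, apply Lemma \ref{lemma basic-boundary regularity}, let $R\to\infty$) is fine in itself under condition \eqref{eq lower gamma liouville}, but you never produce admissible inputs for it.

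Two further points. First, your reduction $v:=u-u_\ast$ does not satisfy the growth bound $|v|\leq C(1+|x|)^{\theta_++\beta-1}$ you assert: $u_\ast$ is built from the full $Q_i\in\mathbf{P}_M$ and grows like $R^{M+\theta_++1}$, which exceeds $R^{\theta_++\beta-1}$ whenever $M>\beta-2$. The paper first proves the degree reduction $Q_1\in\mathbf{P}_{\lfloor\beta\rfloor-2}$, $Q_2\in\mathbf{P}_{\lfloor\beta+\theta_+-\theta_-\rfloor-2}$ by rescaling $u_R=R^{-\theta_+-\beta+1}u(R\,\cdot)$ and invoking Lemma \ref{lemma uni approx poly} in the half-space; some such step is needed before (or instead of) your subtraction. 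Second, the ``analogous Liouville with threshold $s<\theta_-+1$ under \eqref{eq higher gamma liouville}'' is asserted without proof and would require a refined boundary estimate (excluding the $x_n^{\theta_+}$ mode) that Lemma \ref{lemma basic-boundary regularity} does not provide; it is also unnecessary, since \eqref{eq higher gamma liouville} implies \eqref{eq lower gamma liouville} and its extra strength is only needed at the one-dimensional classification stage, which is how the paper (and the final part of your own sketch) handles it.
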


\begin{proof}
We will assume throughout the proof $\kappa <0$, since the case $\kappa \geq 0$ is completely analogous. \\
	
	Given any $R>1$, let us define $u_R(x) = R^{-\theta_+-\beta+1}u(R x)$. Thanks to the growth bound in \eqref{eq Liouville 2}, we have that $\Vert u_R\Vert_{L^\infty(B_1)} \leq C$. Noticing that 
	$$L_k u_R = R^{2-\beta} Q_1(R x) x_n^{\theta_+-1}+ R^{2-\beta-\theta_++\theta_-} Q_2(R x) x_n^{\theta_--1},$$
	we can use Lemma \ref{lemma uni approx poly}  in the flat domain $\Omega = \R^n_+$, to deduce
	\begin{equation}\label{eq bound polyscale}
R^{2-\beta}\Vert Q_1(R x)\Vert_{L^{\infty}(B_1)} + R^{2-\beta-\theta_++\theta_-}\Vert Q_2(R x)\Vert_{L^{\infty}(B_1)}\leq C,
	\end{equation}
	for any $R>1$, implying that 
	\begin{equation}\label{eq degree poly}
		Q_1 \in \mathbf{P}_{\lfloor \beta \rfloor -2}, \quad Q_2 \in \mathbf{P}_{\lfloor \beta + \theta_+ -\theta_- \rfloor -2}.
	\end{equation}
	
	Now, taking $\e \in (0, 1-\theta_+-\theta_-)$ in \eqref{eq holder estimate}, we deduce from \eqref{eq bound polyscale} and Lemma \ref{lemma basic-boundary regularity} that $	[ u_R ]_{C^{\theta_+}(B_\frac{1}{2})} \leq C(1+\Vert Q_1\Vert_{L^\infty(B_1)}+\Vert Q_2\Vert_{L^\infty(B_1)})$. This implies that
	\begin{equation}\label{eq first quotient}
		[ u ]_{C^{\theta_+}\big(B_{R/2}\big)}\leq C R^{\beta-1}.
	\end{equation}
	
	Arguing as in the proof of \cite[Theorem 3.10]{AR}, we consider the quotients
	\begin{equation}\label{eq quotient}
		u_1(x) = \frac{u(x+\tau h)-u(x)}{h^{\theta_+}}
	\end{equation}
	with $h\in (0,R/2)$, $x\in \R^n_+$, and $\tau \in \mathbb{S}^{n-1}$ with $\tau_n=0$. From \eqref{eq first quotient}, we have that $\Vert u_1\Vert_{L^\infty(B_R)} \leq CR^{\beta-1}$. On the other hand, since $\tau$ is orthogonal to $e_n$, we have that $u_1$ satisfies the boundary condition \eqref{eq lower gamma liouville} together with
	\begin{equation}\label{eq Liouville 2.1}
	\begin{cases}
		L_\kappa u_1 = \tilde{Q}_1(x)x_n^{\theta_+-1}+\tilde{Q}_2(x)x_n^{\theta_--1}, \quad \{x_n>0\}\\
		\Vert u_1\Vert_{L^\infty(B_R)} \leq CR^{\beta-1},\\
	\end{cases}
\end{equation}
where $\tilde{Q}_i = \frac{Q_i(x+\tau h)-Q_i(x)}{h^{\theta_+}}$. Arguing as before, we conclude that $[ u_1 ]_{C^{\theta_+}\big(B_{R/2}\big)}\leq C R^{\beta-1-\theta_+}$. By repeating this argument inductively $k$ times, until $k \theta_+ > \beta-1$, we deduce that
\begin{equation*}
	[ u_k ]_{C^{\theta_+}\big(B_{R/2}\big)}\leq C R^{\beta-1- k \theta_+} \to 0,
\end{equation*}
as $R\to \infty$, which, by the boundary condition, implies that $u_k =0$. Therefore, we conclude iterating backwards the difference quotients (see the proof of \cite[Theorem 3.10]{AR}) that
	\begin{equation}\label{eq polynomialform u}
		u(x',x_n) = \sum_{|\lambda|\leq M} C_\lambda(x_n) (x')^\lambda,
	\end{equation}
	with $M = \lfloor \beta-1\rfloor$ for some functions $C_{\lambda}(x_n)$. Our next step consists in proving by induction that
	$$C_\lambda(x_n)= P_{1,\lambda}(x_n)(x_n)_+^{\theta_+}+P_{2,\lambda}(x_n)(x_n)_+^{\theta_-+1},$$
	for polynomials $P_{1,\lambda}$ and $P_{2,\lambda}$ satisfying, furthermore, that $P_{2,\lambda}(x_n)=0$ if $Q_2=0$ whenever \eqref{eq lower gamma liouville} holds, and $P_{1,\lambda}(x_n)=0$ if $Q_1=0$ when we have \eqref{eq higher gamma liouville} instead.\\
	
	For the base case,  if we take tangential derivatives of maximal order $\lambda_0$, i.e., $|\lambda_0| = M$, we conclude, bearing in mind \eqref{eq degree poly}, that $C_{\lambda_0}(x_n)$ satisfies	
	\begin{equation*}
		\begin{cases}
		L_{\kappa,1}(C_{\lambda_0})=0\\
			C_{\lambda_0}(x_n)=0, 
		\end{cases}
	\end{equation*}
	where $L_{\kappa,1}(y) = -y''+\frac{\kappa}{x_n^2}y$, i.e., the $1$ dimensional version of $L_\kappa$.\\
	
	Combining Lemma \ref{lemma one dimensional problem}, and noticing from the decomposition \eqref{eq polynomialform u} that $C_{\lambda}$ inherits the corresponding boundary condition from $u$, we deduce that
	$$C_{\lambda_0}(x_n)=C_{1,\lambda_0}x^{\theta_+},$$
	with $C_{1,\lambda_0}=0$ if \eqref{eq lower gamma liouville} holds instead. This proves the base case.\\
	
	Let us assume now the result for $|\lambda|> j$ and show that it holds for $\beta$ with $|\beta|= j$. Taking derivatives on both sides of \eqref{eq polynomialform u} with respect to the tangential variables $x'$, and setting up the notation  $v=\partial^\beta u$, we deduce from the inductive hypothesis that	
	\begin{equation}\label{eq structure v}
		u(x',x_n)= \beta! C_{\beta}(x_n)+\sum_{\{ \lambda: \lambda>\beta\}} (x')^{\lambda-\beta}\Big( P_{1,\lambda}(x_n)(x_n)_+^{\theta_+}+P_{2,\lambda}(x_n)_+^{\theta_-+1}\Big),
	\end{equation}
	with $P_{i,\lambda}=0$ if $Q_i=0$ for $i=1,2$, provided that the right corresponding boundary behavior is enforced. Thus, differentiating \eqref{eq Liouville 2} and applying $L_{\kappa,1}$ to the representation \eqref{eq structure v} yields	
	\begin{eqnarray}\nonumber
		\beta! L_{k,1}(C_{\beta}(x_n))&=&\partial^\beta Q_1(x)x_n^{{\theta_-}-1}+ \partial^\beta Q_2(x)x_n^{{\theta_+}-1}\\\nonumber
		&&- \sum_{\{ \lambda: \lambda>\beta\}} (\Delta (x')^{\lambda-\beta})\Big( P_{1,\lambda}(x_n)x_n^{{\theta_+}}+P_{2,\lambda}(x_n)x_n^{\theta_-+1}\Big)\\\label{eq equation derivative}
		&&- \sum_{\{ \lambda: \lambda>\beta\}} ( (x')^{\lambda-\beta})\Big( P_{1,\lambda}''(x_n)x_n^{\theta_+}+P_{2,\lambda}''(x_n)x_n^{\theta_-}\Big).
	\end{eqnarray}
	
	Therefore, since  $ L_{\kappa,1}(C_{\beta}(x_n))$ does not depend on $x'$, we deduce from \eqref{eq equation derivative} that
	\begin{equation}
	L_{\kappa,1}(C_{\beta}(x_n))=\tilde{P}_{1}(x_n)x_n^{{\theta_+}-1}+ \tilde{P}_{2}(x_n)(x_n)x_n^{{\theta_-}-1},
	\end{equation}
	for some polynomials ${\tilde{P}_{1}}$ and $\tilde{P}_{2}$, with $\tilde{P}_{i}=0$ if $Q_i =0$ for $i=1,2$. Then, combining again Lemma \ref{lemma one dimensional problem}, with either  \eqref{eq lower gamma liouville} or  \eqref{eq higher gamma liouville}, the decomposition \eqref{eq polynomialform u}, and the linearity of $L_{\kappa,1}$ we deduce
	$$ C_{\beta}(x_n) = P_{1,\beta}(x_n)x_n^{\theta_+}+ P_{2,\beta}(x_n)x_n^{\theta_-+1},$$
	with $ P_{i,\beta}=0$ if $Q_i =0$ for $i=1,2$.\\
	
	This completes the inductive argument implying
	\begin{equation}\label{eq structure u}
		u(x) = P_1(x)x_n^{\theta_+}+ P_2(x)x_n^{\theta_-+1}
	\end{equation} 
	for some polynomials $P_1$ and $P_2$, with $ P_{i}=0$ if $Q_i =0$.\\ 
	
	Lastly, since $\theta_+-\theta_- \notin \N$, each term in \eqref{eq structure u} must satisfy the same growth bound as $u$. This implies that $P_1 \in \mathbf{P}_{\lfloor \beta \rfloor-1}$ and that $P_2 \in \mathbf{P}_{\lfloor \beta +\theta_+-\theta- \rfloor -2 }$, finishing the proof.
\end{proof}

\section{Higher order boundary Schauder estimates for solutions to the linearized equation}
\label{sec4}

This section is devoted to the proof of a generalized version of Theorem  \ref{thm schauder}, which is carried out using a blow-up in the same spirit of \cite{AR}. We divide the argument into two parts, addressing first the case where $\partial \Omega \cap B_1$ is a $C^{\beta}$ surface with $\beta \in (1,2)$ and then proving the higher order version of this result. 
 
 \begin{theorem}\label{theorem initial reg linear}
 	Let $\kappa \geq -\frac{1}{4}$, $\theta_\pm$ as in \eqref{eq powers}, and $\beta \in(1,2)$ with $\beta+\theta_+-\theta_- \notin \N$. Let $\Omega \subset \R^n$ such that $\partial \Omega \cap B_2$ a $C^{\beta}$ surface,  let $f$ and $g$ satisfying  $\Vert f\Vert_{C^{\beta-1}(\overline{\Omega} \cap B_1)}, \Vert g\Vert_{C^{\beta-1+\theta_+-\theta_-}(\overline{\Omega} \cap B_1)}\leq 1$ and $f, g|_{\partial\Omega}\equiv0$, where $g=0$ if $\theta_- \leq0$. Let $v \in  C(\Omega \cap B_1)$ be a solution to
 	\begin{equation}\label{eq linearized base case}
 			L_\kappa(v)= fd^{\theta_+-2}+gd^{\theta_--2},\quad \text{in}\,\, \Omega \cap B_1,
 	\end{equation}
 	satisfying 
 	\begin{equation}\label{bdry-cond schauder}
 		\begin{array}{rl}
 			\displaystyle \lim_{x\to \partial\Omega} \frac{v}{d^{\theta_-}} = 0 & \quad \textrm{if}\quad \kappa>-{\textstyle \frac14} \vspace{1mm} \\
 			\displaystyle \lim_{x\to \partial\Omega} \frac{v}{d^{1/2} |\log d|} = 0 & \quad \textrm{if}\quad \kappa=-{\textstyle \frac14}.
 		\end{array}
 	\end{equation}

 	Then, there exists a constant $C>0$ and a radius $\rho>0$, such that for every $x_0 \in \partial \Omega \cap B_\rho$, there are constants $\lambda_{1,x_0}, \lambda_{2,x_0} \in \R$ such that
 	\begin{eqnarray}\label{eq auxiliary linear lower order} 
 		|v(x)-\lambda_{1,x_0}d^{\theta_+}(x)-\lambda_{2,x_0}(x)d^{\theta_-+1}(x)|\leq C |x-x_0|^{{{\theta_+} + \beta-1}}\quad  \mbox{for $x\in B_{\rho}(x_0)\cap \Omega$,}
 	\end{eqnarray}	
 	with $\lambda_{2,x_0}=0$ if $g=0$. Here the constants $C$ and $\rho$ depend only on $\kappa$, $\beta$, and $n$.
 \end{theorem}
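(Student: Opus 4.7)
The plan is a blow-up/contradiction argument in the spirit of \cite{AR}, reducing the pointwise estimate \eqref{eq auxiliary linear lower order} to the Liouville classification Lemma \ref{lemma general Liouville}, with compactness provided by Lemma \ref{lemma basic-boundary regularity}. For $x_0 \in \partial\Omega$ define
\[\vartheta(r, x_0) := \inf_{\lambda_1, \lambda_2 \in \R} \|v - \lambda_1 d^{\theta_+} - \lambda_2 d^{\theta_-+1}\|_{L^\infty(B_r(x_0) \cap \Omega)},\]
(with $\lambda_2 \equiv 0$ whenever $g \equiv 0$), and let $\lambda_{1,r,x_0}, \lambda_{2,r,x_0}$ be minimizers. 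The theorem reduces to proving $\vartheta(r,x_0) \leq C r^{\theta_+ + \beta - 1}$ uniformly for $x_0 \in \partial\Omega \cap B_\rho$. If this fails, a standard dyadic selection produces sequences of domains $\Omega_k$, data $(v_k, f_k, g_k)$, boundary points $x_k$, and scales $r_k \downarrow 0$ with $M_k := r_k^{-(\theta_+ + \beta - 1)}\vartheta_k(r_k, x_k) \to \infty$ and maximality $\vartheta_k(R r_k, x_k) \leq C M_k (Rr_k)^{\theta_+ + \beta - 1}$ for $1 \leq R \leq 1/r_k$. The normalized rescaled functions
\[w_k(y) := \frac{v_k(x_k + r_k y) - \lambda_{1,k} d_k(x_k + r_k y)^{\theta_+} - \lambda_{2,k} d_k(x_k + r_k y)^{\theta_-+1}}{M_k r_k^{\theta_+ + \beta - 1}}, \qquad y \in \Omega_k' := r_k^{-1}(\Omega_k - x_k),\]
then satisfy $\|w_k\|_{L^\infty(B_1 \cap \Omega_k')} = 1$ together with the polynomial growth $\|w_k\|_{L^\infty(B_R \cap \Omega_k')} \leq C R^{\theta_+ + \beta - 1}$.

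Next I would derive the equation for $w_k$. By Lemma \ref{lemma generalized distance}(3) one has $L_\kappa(d^{\theta_+}) = O(d^{\beta + \theta_+ - 3})$ (using $\theta_+(\theta_+ - 1) = \kappa$), while a direct computation modeled on Lemma \ref{lemma one dimensional problem} gives $L_\kappa(d^{\theta_-+1}) = -2\theta_- d^{\theta_- - 1} + O(d^{\beta + \theta_- - 2})$. Since $f_k, g_k$ vanish on $\partial\Omega_k$ with the prescribed H\"older regularity, the rescaled data $r_k^{-(\beta - 1)} f_k(x_k + r_k y)$ and $r_k^{-(\beta - 1 + \theta_+ - \theta_-)} g_k(x_k + r_k y)$ remain uniformly bounded on compact sets; the divisor $M_k \to \infty$ then drives all right-hand-side contributions to zero, so $L_\kappa(w_k) \to 0$ in a suitable sense on $\Omega_k'$. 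The boundary condition \eqref{bdry-cond schauder} is preserved under the subtraction because $d^{\theta_+}$ and $d^{\theta_-+1}$ are both $o(d^{\theta_-})$ near $\partial\Omega$ (and $d^{1/2}, d^{3/2} = o(d^{1/2}|\log d|)$ in the critical case $\kappa = -\tfrac14$).

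The rescaled domains $\Omega_k'$ flatten to the half-space $\{y_n > 0\}$ (using $\beta > 1$) and $\tilde d_k := d_k(x_k + r_k \cdot)/r_k \to (y_n)_+$ locally uniformly. Combined with the up-to-the-boundary $C^{\theta_+}$ estimate of Lemma \ref{lemma basic-boundary regularity} and the polynomial growth, this yields precompactness of $\{w_k\}$ on compact subsets of $\overline{\{y_n \geq 0\}}$. Any subsequential limit $w_\infty$ solves $L_\kappa w_\infty = 0$ in $\{y_n > 0\}$, grows as $O(R^{\theta_+ + \beta - 1})$, and inherits the boundary condition \eqref{bdry-cond schauder}. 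Applying Lemma \ref{lemma general Liouville} with $Q_1 = Q_2 = 0$ (and noting that $\beta \in (1,2)$ forces both polynomial factors to be constants) yields
\[w_\infty(y) = c_1 y_n^{\theta_+} + c_2 y_n^{\theta_-+1}, \qquad c_1, c_2 \in \R,\]
with $c_2 = 0$ whenever $g \equiv 0$.

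The contradiction is now algebraic: optimality of $\lambda_{1,k}, \lambda_{2,k}$ passes to the limit, so $0$ realizes the $L^\infty(B_1 \cap \{y_n > 0\})$ best approximation of $w_\infty$ within $\mathrm{span}\{y_n^{\theta_+}, y_n^{\theta_-+1}\}$; since $w_\infty$ itself lies in this span and the two generators are linearly independent (the degenerate case $\theta_+ = \theta_- + 1$ corresponds to $\kappa = 0$, where the hypothesis forces $\lambda_2 \equiv 0$), the unique best approximation is $w_\infty$ itself, forcing $w_\infty \equiv 0$ and contradicting $\|w_\infty\|_{L^\infty(B_1 \cap \{y_n > 0\})} = 1$. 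The main technical obstacle is the bookkeeping of $\lambda_{1,k}, \lambda_{2,k}$ when $\theta_- < 0$ (i.e., $\kappa > 0$): these coefficients can a priori be unbounded in absolute value, and the dyadic maximality of $M_k$ together with the growth $\vartheta_k(R r_k) \leq C M_k (R r_k)^{\theta_+ + \beta - 1}$ at intermediate scales is precisely what supplies the uniform control needed to preserve the polynomial growth of $w_k$ on every rescaled ball; the hypothesis $\beta + \theta_+ - \theta_- \notin \N$ keeps the two approximation powers genuinely separated in the Liouville classification.
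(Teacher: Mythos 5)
Your overall architecture --- contradiction, blow-up with best-approximation correctors, compactness from Lemma \ref{lemma basic-boundary regularity}, classification by the Liouville result, and a final contradiction with optimality of the coefficients --- is the same as the paper's, and your $L^\infty$ best-approximation normalization (instead of the paper's $L^2$ projection, Lemma \ref{eq unboundedness of theta}) would pass to the limit correctly. However, there is a genuine gap in your identification of the limiting equation. You claim that after dividing by $M_k r_k^{\theta_++\beta-1}$ \emph{all} right-hand-side contributions vanish, so that $L_\kappa w_\infty=0$ and Lemma \ref{lemma general Liouville} applies with $Q_1=Q_2=0$. This is false in general when $\theta_->0$ and $g\not\equiv0$: the subtracted corrector contributes $L_\kappa\big(\lambda_{2,k}d^{\theta_-+1}\big)=-2\theta_-\lambda_{2,k}d^{\theta_--1}+\dots$, and after rescaling the coefficient of $(d/r_k)^{\theta_--1}$ is comparable to $\lambda_{2,k}\,r_k^{\,\theta_-+2-\theta_+-\beta}/M_k$, which is \emph{not} driven to zero by $M_k\to\infty$, since $\lambda_{2,k}$ may itself grow with $M_k$ (as you yourself note, these coefficients are a priori unbounded when $\theta_-<\theta_+$); similarly, the rescaled datum $g_k d^{\theta_--2}$, with its natural vanishing rate $|g_k|\lesssim d^{\beta-1+\theta_+-\theta_-}$, only yields a bounded, not vanishing, contribution. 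The correct limit equation is $L_\kappa w_\infty=l\,(y_n)_+^{\theta_--1}$ with $l$ possibly nonzero --- this is exactly why the paper states Lemma \ref{lemma general Liouville} with polynomial right-hand sides and proves Lemma \ref{lemma uni approx poly} to show the surviving coefficient is uniformly bounded; that boundedness is also needed before you can invoke Lemma \ref{lemma basic-boundary regularity} for equicontinuity. Your final step survives the correction (the classification still gives $w_\infty=c_1y_n^{\theta_+}+c_2y_n^{\theta_-+1}$, and the optimality-in-the-limit argument then forces $w_\infty\equiv0$), but as written the identification and justification of the blow-up equation is incomplete.

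A second point you only gesture at: the growth bound $\Vert w_k\Vert_{L^\infty(B_R)}\le CR^{\theta_++\beta-1}$ does not follow directly from the scale-by-scale bound on $\vartheta_k(Rr_k)$, because $w_k$ is built with the coefficients chosen at scale $r_k$ while $\vartheta_k(Rr_k)$ uses the different optimal coefficients at scale $Rr_k$; one must compare coefficients across dyadic scales, which is the content of the paper's Lemma \ref{lemma second polynomial scalinginv bounds} and Lemma \ref{lemma growth property}, and the same telescoping comparison is what ultimately controls the surviving coefficient $l$ above. Supplying these two ingredients would bring your proposal in line with the paper's proof.
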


	\begin{proof}
		\noindent {\it Preparations:} Without loss of generality, let us take $x_0=0$ and let us extend $v$ and $d$ by zero outside $\Omega\cap B_1$. So, if we contradict  the inequality \eqref{eq auxiliary linear lower order}, we deduce the existence of a sequence of domains $\Omega_j$ with $\Vert \Omega_j\Vert_{C^{\beta}}\leq 1$ and, in consequence, of generalized distance functions $d_j$ such that for any sequence of constants $\lambda_{i,j} \in \R$ (with $k_{2,j}=0$ if $\theta_- \leq0$), we have that
		\begin{equation}\label{eq contradiction statement lower linear}
			\sup_{r >0} \sup_{j\in \N} \frac{1}{r^{{{\theta_+} + \beta-1}}} \Vert v_j-\lambda_{1,j} d_j^{\theta_+}- \lambda_{2,j}d_j^{\theta_-+1}\Vert_{L^\infty(B_r)}=\infty,
		\end{equation}
		where $v_j$ satisfies \eqref{bdry-cond schauder} and
		\begin{equation}\label{eq seq linearized lower BVP}
		L_\kappa(v_j)= f_jd^{\theta_+-2}+g_jd^{\theta_--2},\quad \text{in}\,\, \Omega \cap B_1,\\
		\end{equation}
		with $\Vert f_{j}\Vert_{C^{\beta-1}(B_1\cap \overline{\Omega_j})}, \Vert g_{j}\Vert_{C^{\beta-1+\theta_+-\theta_-}(B_1\cap \overline{\Omega_j})}\leq 1$ with $f_j, g_j|_{\partial\Omega_j}\equiv0$  for $j\in \N$ and with $g_{j}=0$ if $\theta_- \leq0$.\\
		
		\medskip
		
		\textit{Step one:}  We start selecting  $\lambda_{1,j,r}, \lambda_{2,j,r}$ as the the least square coefficients of the projection of $v_j$ onto the subspace spanned by $\{d_j^{\theta_+},d_j^{\theta_-+1}\}$ in $L^2(B_r)$. In this case, they are characterized by the orthogonality conditions
		\begin{equation}\label{eq orthogonality der blow-up lower}
			\int_{B_r} \Big(v_j -\lambda_{1,j,r}d_j^{\theta_+}- \lambda_{2,j,r}d_j^{\theta_-+1}\Big)(c_1d_j^{\theta_+}+c_2d_j^{\theta_-+1})=0,
		\end{equation}
		for any $(c_1, c_2)\in  \R^2$.\\

		Let us define
		$$\theta(r) =  \sup_{\rho >r }\sup_{j\in \N} \frac{1}{r^{{{\theta_+} + \beta-1}}} \big\| v_j -\mu_{1,j} d_j^{\theta_+}- \mu_{2,j} d_j^{\theta_-+1}\big\|_{L^\infty(B_r)},$$
		with $\mu_{1,j}= \lambda_{1,j,r}$ and  $\mu_{2,j}= \lambda_{2,j,r}$.\\
		
		By Lemma \ref{eq unboundedness of theta} we have that $\lim_{r\to 0^+} \theta(r)=\infty$. Thus, we can pick a sequence $\{r_j\}_{j\in \N}$  with $r_j\to 0^+$ as $j\to \infty$ such that 	
		\begin{equation}\label{eq blowup sequence der lower}
			w_{j}(x):= \frac{ v_{j}(r_j x)-\mu_{1,j}d_j^{\theta_+}(r_j x)-\mu_{2,j}d_j^{\theta_-+1}(r_j x)}{r_j^{{{\theta_+}  +\beta-1}}\theta(r_j)},
		\end{equation}
		satisfies $ \Vert w_j\Vert_{L^\infty(B_1)} \in [\frac{1}{2},1]$ for $l\in \N$.\\		
		
		Additionally, Lemma \ref{lemma growth property} provides the growth bound
		\begin{equation}\label{eq growthcond der low}
			\Vert w_j \Vert_{L^\infty(B_R)}\leq C R^{{\theta_+}+\beta-1}.
		\end{equation}	
		
		\medskip
		
		\noindent{\it Step two:} We proceed to show that the sequence $\{w_j\}_{j\in \N}$ satisfies, up to a small perturbation, a PDE of the form \eqref{eq boundary linearized pde statement}. Applying Lemma \ref{lemma generalized distance}, we can easily compute the Laplacian of the auxiliary term $\mu_{1,j}d_j^{\theta_+}(r_jx)+\mu_{2,j}d_j^{\theta_-+1}(r_jx)$ as follows
		\begin{eqnarray}\nonumber
		\frac{1}{r_j^{{\theta_+}+\beta-1}}\Delta  \Big(\mu_{1,j}d_j^{\theta_+}(r_jx)+\mu_{2,j}d_j^{\theta_-+1}(r_j x)\Big)		&=&\frac{1}{r_j^{\theta_++\beta-1}}\frac{r_j^2\kappa}{d_j^2(rx)}\Big(\mu_{1,j} d_j^{\theta_+}(r_jx)+\mu_{2,j}d_j^{\theta_-+1}(r_jx)\Big)\\ \nonumber
		&&+ l_j  \Big( \frac{d_j(rx)}{r}\Big)^{\theta_--1}+  h_{1,j}(x) \Big( \frac{d_j(rx)}{r}\Big)^{\theta_++\beta-3}\\ \label{eq first order error linear lower}
		&&+ h_{2,j}(x) \Big( \frac{d_j(rx)}{r}\Big)^{\theta_-+\beta-2} 
	\end{eqnarray}
		where $l_j \in \R$, $\Vert h_{1,j}\Vert_{L^\infty(B_2\cap \overline{\Omega_j})}\leq \frac{C}{\theta(r_j)}$ and $\Vert h_{2,j}\Vert_{L^\infty(B_2\cap \overline{\Omega_j})}\leq \frac{C r_j^{\theta_--\theta_++1}}{\theta(r_j)}$. We remark that the latter quantity goes to zero as $j\to \infty$ since $\theta_+-\theta_- \leq 1$.\\
		
		By combining \eqref{eq first order error linear lower} with \eqref{eq seq linearized lower BVP}, we deduce
		\begin{eqnarray}\notag
			L_\kappa ( w_j) &=& l_j  \Big( \frac{d_j(rx)}{r}\Big)^{\theta_--1}+\\ \label{eq almost solution linear lower}
			&&+  h_{1,j}(x) \Big( \frac{d_j(rx)}{r}\Big)^{\theta_++\beta-3}+ h_{2,j}(x) \Big( \frac{d_j(rx)}{r}\Big)^{\theta_-+\beta-2}  \\
			 && + \tilde{f}_j(x) \Big(\frac{d_j(r_jx)}{r_j}\Big)^{\theta_+-2}+\tilde{g}_j(x)\Big(\frac{d_j(rx)}{r}\Big)^{\theta_--2}		
		\end{eqnarray}
		where $ \tilde{f}_{j}= \frac{f_j(r_j x)}{\theta(r_j)r_j^{\beta-1}}$ and  $ \tilde{g}_{j}= \frac{g_j(r_j x)}{\theta(r_j)r_j^{\beta+\theta_+-\theta_--1}}$ and with both vanishing uniformly as $j\to \infty$ thanks to the hypotheses on $f_j$ and $g_j$. In particular, we are fundamentally exploiting that $|f_j|\leq C d_j^{\beta-1}$ and that $|g_j|\leq Cd_j^{\beta-1+\theta_+-\theta_-}$.\\
		
		On the other hand, let us notice that the hypotheses \eqref{bdry-cond schauder} force $w_j$ to satisfy the boundary conditions \eqref{bdry-cond statement}. With the aim of applying Lemma \ref{lemma basic-boundary regularity}, we take $\varepsilon \in (0,\min\{\beta-1, 1+\theta_--\theta_+\})$ in \eqref{eq boundary estimate} so that the right hand side in  \eqref{eq almost solution linear lower} remains uniformly bounded after multiplying by $d_j^{2-\theta_+-\e}$. Thus,  in virtue of Lemma \ref{lemma basic-boundary regularity} and Lemma \ref{lemma uni approx poly}, the solutions $\{w_j\}_{j\in \N}$ to the sequence of problems \eqref{eq almost solution linear lower} are equicontinuous and, therefore, $w_j \to w_0$ locally uniformly as $j\to \infty$ -up to subsequences. So, thanks to  Lemma \ref{lemma uni approx poly},  and \eqref{eq growthcond der low} we deduce that		
		\begin{equation}\label{eq limiting equation linear low}
			\begin{cases}
				L_\kappa w_0 = l x_n^{\theta_--1}, \quad \mbox{ in $\{x_n>0\}$},\\
				\Vert w_0\Vert_{L^\infty(B_R\cap \R^n_+)} \leq CR^{{\theta_+}+\beta-1}.
			\end{cases}
		\end{equation}

		Additionally, thanks to the uniform convergence, the inequality \eqref{eq boundary estimate} is satisfied by each $w_j$ implies that
		\begin{equation}\label{eq boundary linear lower}
			|w_0|\leq C x_n^{\theta_+}.
		\end{equation}

		\medskip	
		\noindent{\it Step 3:} Since $w_0$ satisfies \eqref{eq limiting equation linear low} and \eqref{eq boundary linear lower}, Lemma  \ref{lemma general Liouville} applies, implying that
		$$w_0= \lambda_1 (x_n)_+^{\theta_+}+\lambda_2 (x_n)_+^{\theta_-+1}$$ 
		with $\lambda_1,\lambda_2 \in \R$.  However, taking limits in the orthogonality conditions \eqref{eq orthogonality der blow-up lower} as $j\to \infty$, yields 
		
		\begin{equation*}
			\int_{B_1} w_0  \Big(c_1(x_n)_+^{\theta_+}+c_2(x_n)_+^{\theta_-+1}\Big)=0,
		\end{equation*}
			for any $(c_1, c_2)\in  \R^2$.	 Thus, $w_0=0$ contradicting that $\Vert w_0\Vert_{L^\infty(B_1)}\geq \frac{1}{2}$.
	\end{proof}

In the context of  in Theorem \ref{thm schauder}, if $\partial \Omega\cap B_1 \in C^{\beta}$ with $\beta>2$, since $f|_{\partial\Omega}\equiv0$, we have that $fd^{\mu-2}=\tilde{f}d^{\mu-1}$ where $\tilde f = f/d$ is one degree less regular than $f$ up to the boundary. Based on this observation, we prove the following higher order version of Theorem \ref{theorem initial reg linear}.

\begin{theorem}\label{thm higher reg linear}
	Let $\kappa \geq -\frac{1}{4}$, $\theta_\pm$ as in \eqref{eq powers},  let $\beta >2$ with $\beta \notin \N$. Let $\Omega \subset \R^n$ with $\partial \Omega \cap B_1$ a $C^{\beta}$ surface,  let $f$ and $g$ satisfying $\Vert f\Vert_{C^{\beta-2}(\overline{\Omega} \cap B_1)}, \Vert g\Vert_{C^{\beta-2+\theta_+-\theta_-}(\overline{\Omega} \cap B_1)}\leq 1$, with $g=0$ if $\theta_- \leq0$. Let $v \in  C(\overline{ \Omega}\cap B_1)$ be a solution to
	\begin{equation}\label{eq higher linearized base case}
		L_\kappa(v)= fd^{\theta_+-1}+g d^{\theta_--1},\quad \text{in}\,\, \Omega \cap B_1,
	\end{equation}
	satisfying 
	\begin{equation}\label{higher bdry-cond schauder}
		\begin{array}{rl}
			\displaystyle \lim_{x\to \partial\Omega} \frac{v}{d^{\theta_-}} = 0 & \quad \textrm{if}\quad \kappa>-{\textstyle \frac14} \vspace{1mm} \\
			\displaystyle \lim_{x\to \partial\Omega} \frac{v}{d^{1/2} |\log d|} = 0 & \quad \textrm{if}\quad \kappa=-{\textstyle \frac14}.
		\end{array}
	\end{equation}
	
	Then, there exists a constant $C>0$ and a radius $\rho>0$, such that for every $x_0 \in \partial \Omega \cap B_\rho$, there exists polynomials $Q_{1,x_0} \in \mathbf{P}_{\lfloor \beta \rfloor-1}$ and $Q_{2,x_0} \in \mathbf{P}_{\lfloor \beta +\theta_+-\theta_- \rfloor -2 }$ such that
	\begin{eqnarray}\label{eq linear higher order} 
		\Big|v(x)-Q_{1,x_0}(x)d^{\theta_+}(x)-Q_{2,x_0}(x)d^{\theta_-+1}(x)\Big|\leq C |x-x_0|^{{{\theta_+} + \beta-1}}\quad  \mbox{for $x\in B_{\rho}(x_0)\cap \Omega$},
	\end{eqnarray}		
with $Q_{2,x_0}=0$ if $g=0$.	
\end{theorem}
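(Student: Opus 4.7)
The plan is to follow the contradiction and blow-up scheme of Theorem \ref{theorem initial reg linear}, but now replacing the scalar constants $\lambda_{1,x_0},\lambda_{2,x_0}$ with polynomials $Q_{1,x_0}\in \mathbf{P}_{\lfloor\beta\rfloor-1}$ and $Q_{2,x_0}\in\mathbf{P}_{\lfloor\beta+\theta_+-\theta_-\rfloor-2}$, and exploiting the extra regularity of the right-hand side $fd^{\theta_+-1}+gd^{\theta_--1}$ (one power more than in the base case). WLOG take $x_0=0$. Negating the estimate produces sequences $\Omega_j$, $d_j$, $f_j$, $g_j$, $v_j$ verifying \eqref{eq higher linearized base case}--\eqref{higher bdry-cond schauder} such that, for every choice of $Q_{1,j},Q_{2,j}$ of the prescribed degrees,
\[
\sup_{r>0}\sup_{j\in\N}\frac{1}{r^{\theta_++\beta-1}}\big\|v_j-Q_{1,j}\,d_j^{\theta_+}-Q_{2,j}\,d_j^{\theta_-+1}\big\|_{L^\infty(B_r)}=\infty.
\]

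I then take $Q_{1,j,r},Q_{2,j,r}$ to be the $L^2(B_r)$-projection of $v_j$ onto the finite-dimensional subspace
\[
\mathrm{span}\{x^\alpha d_j^{\theta_+}:|\alpha|\le\lfloor\beta\rfloor-1\}\cup\mathrm{span}\{x^\alpha d_j^{\theta_-+1}:|\alpha|\le\lfloor\beta+\theta_+-\theta_-\rfloor-2\},
\]
obtaining the orthogonality relations analogous to \eqref{eq orthogonality der blow-up lower} against every element of this space. I define $\theta(r)$ as the supremum in $\rho\ge r$ of these normalized errors; by a standard argument (as in Lemma \ref{eq unboundedness of theta}) $\theta(r)\to\infty$ as $r\to 0^+$, and so along a sequence $r_j\downarrow 0$ the renormalized functions
\[
w_j(x):=\frac{v_j(r_jx)-Q_{1,j,r_j}(r_jx)\,d_j^{\theta_+}(r_jx)-Q_{2,j,r_j}(r_jx)\,d_j^{\theta_-+1}(r_jx)}{r_j^{\theta_++\beta-1}\theta(r_j)}
\]
satisfy $\|w_j\|_{L^\infty(B_1)}\in[\tfrac12,1]$, and (by the optimality of the projections, cf.\ Lemma \ref{lemma growth property}) the growth bound $\|w_j\|_{L^\infty(B_R)}\le CR^{\theta_++\beta-1}$ for all $R>0$ with $Rr_j\le1$.

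The key computation is the PDE for $w_j$. Expanding $\Delta(Q\,d^{\theta_\pm})$ via the identities in Lemma \ref{lemma generalized distance}, and Taylor-expanding $f_j,g_j$ and the coefficients of $Q_{1,j,r_j},Q_{2,j,r_j}$ at the origin using the $C^{\beta-2}$ (resp.\ $C^{\beta-2+\theta_+-\theta_-}$) bounds on $f_j,g_j$, I get
\[
L_\kappa w_j=R_{1,j}(x)\Big(\tfrac{d_j(r_jx)}{r_j}\Big)^{\theta_+-1}+R_{2,j}(x)\Big(\tfrac{d_j(r_jx)}{r_j}\Big)^{\theta_--1}+h_{1,j}(x)\Big(\tfrac{d_j(r_jx)}{r_j}\Big)^{\theta_++\beta-3}+h_{2,j}(x)\Big(\tfrac{d_j(r_jx)}{r_j}\Big)^{\theta_-+\beta-2},
\]
where $R_{i,j}$ are polynomials (of degrees $\lfloor\beta\rfloor-2$ and $\lfloor\beta+\theta_+-\theta_-\rfloor-3$ coming from the Taylor remainders of $f_j,g_j$ and from the error terms in Lemma \ref{lemma generalized distance} applied to $d^{\theta_+}$ and $d^{\theta_-+1}$), and where $\|h_{i,j}\|_{L^\infty}\to 0$. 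Choosing $\varepsilon\in(0,\min\{\beta-1,1+\theta_--\theta_+\})$ as in the base case, Lemma \ref{lemma basic-boundary regularity} gives equicontinuity up to $\partial\Omega_j$ and, combined with Lemma \ref{lemma uni approx poly} applied to the polynomials $R_{i,j}$, uniform bounds on the coefficients. Hence, up to a subsequence, $w_j\to w_0$ locally uniformly in $\overline{\R^n_+}$, with $w_0$ solving
\[
L_\kappa w_0=R_1(x)x_n^{\theta_+-1}+R_2(x)x_n^{\theta_--1}\quad\text{in }\{x_n>0\},\qquad \|w_0\|_{L^\infty(B_R)}\le CR^{\theta_++\beta-1},
\]
for polynomials $R_1,R_2$ of the above degrees; moreover, the uniform boundary estimate from Lemma \ref{lemma basic-boundary regularity} survives the limit, so $w_0$ satisfies the Dirichlet-type condition \eqref{bdry-cond2}.

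At this point Lemma \ref{lemma general Liouville} (case \eqref{eq lower gamma liouville}) classifies $w_0$ as $P_1(x)x_n^{\theta_+}+P_2(x)x_n^{\theta_-+1}$ with $P_1\in\mathbf{P}_{\lfloor\beta\rfloor-1}$ and $P_2\in\mathbf{P}_{\lfloor\beta+\theta_+-\theta_-\rfloor-2}$. Passing the $L^2$-orthogonality relations to the limit, and using that $d_j(r_jx)/r_j\to x_n$ locally uniformly (Lemma \ref{lemma generalized distance}), yields
\[
\int_{B_1}w_0\big(c_1(x')x_n^{\theta_+}+c_2(x')x_n^{\theta_-+1}\big)\,dx=0
\]
for every pair of polynomials $(c_1,c_2)$ of the prescribed degrees. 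Since $w_0$ itself lies in this span, this forces $w_0\equiv 0$, contradicting $\|w_0\|_{L^\infty(B_1)}\ge\tfrac12$. The case $g\equiv 0$ (so $R_{2,j}=0$ after absorbing) produces $P_2=0$ by the last statement of Lemma \ref{lemma general Liouville}, giving the final clause.

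The main obstacle is the bookkeeping in the PDE computation for $w_j$: one must verify that $\Delta(Q_{i,j,r_j}d_j^{\theta_\pm})$ produces, apart from the expected $\kappa Q_{i,j,r_j}d_j^{\theta_\pm-2}$ term that is absorbed into $L_\kappa w_j$, only polynomial right-hand sides of the correct degree plus vanishing remainders. This relies on the sharp distance expansion \eqref{eq lap distpower}, the identity $\theta_\pm(\theta_\pm-1)=\kappa$, Taylor expansions for $f_j,g_j$ (using $f_j|_{\partial\Omega_j}=g_j|_{\partial\Omega_j}=0$ only implicitly through the powers $d^{\theta_\pm-1}$), and uniform control of the coefficients of $Q_{i,j,r_j}$ at the right scale, coming from the $L^2$-optimality of the projections.
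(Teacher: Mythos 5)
Your proposal is correct and follows essentially the same route as the paper: a contradiction/blow-up argument with $L^2$ least-squares projections onto $\{x^\alpha d^{\theta_+}\}\cup\{x^\alpha d^{\theta_-+1}\}$, compactness via Lemma \ref{lemma basic-boundary regularity} and Lemma \ref{lemma uni approx poly}, the Laplacian expansion of the polynomial correctors (packaged in the paper as Lemma \ref{lemma growth property}(3)), the Liouville classification of Lemma \ref{lemma general Liouville}, and the limiting orthogonality relations forcing $w_0\equiv0$. The only cosmetic difference is bookkeeping of the vanishing remainder terms and that the paper passes the pointwise bound $|w_0|\leq Cx_n^{\theta_+}$ to the limit rather than the boundary condition itself, but these play the same role.
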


\begin{proof}	
	\noindent {\it Preparations:} Let us assume again $x_0=0$ and let us extend $v$ and $d$ by zero in $\Omega^c\cap B_1$. By contradicting the inequality \eqref{eq linear higher order}, we deduce the existence of a sequence of domains $\Omega_j$ with $\Vert \Omega_j\Vert_{C^{\beta}}\leq 1$ and, in consequence, of generalized distance functions $d_j$ such that for any sequence of  polynomials $P_{1,j} \in \mathbf{P}_{\lfloor \beta \rfloor -1}$ and $P_{2,j} \in \mathbf{P}_{\lfloor \beta +\theta_+-\theta_- \rfloor -1}$
	\begin{equation}\label{eq contradiction statement linear}
		\sup_{r >0} \sup_{j\in \N} \frac{1}{r^{{{\theta_+} + \beta-1}}} \Big\Vert v_j-P_{1,j} d_j^{\theta_+}- P_{2,j}d_j^{\theta_-+1}\Big\Vert_{L^\infty(B_r)}=\infty,
	\end{equation}
	with $v_j$ satisfying \eqref{higher bdry-cond schauder} and
	\begin{equation}\label{eq seq linearized BVP}
			L_\kappa(v_j)= f_{1,j}d_j^{\theta_+-1}+f_{2,j}d_j^{\theta_--1}\quad \text{in}\,\, \Omega \cap B_1
	\end{equation}
	and where $\Vert f_{1,j}\Vert_{C^{\beta -2}(B_2\cap \overline{\Omega_j})}\leq 1$ and $\Vert f_{2,j}\Vert_{C^{\beta +\theta_+-\theta_- -2}(B_2\cap \overline{\Omega_j})}\leq 1$ for $j \in \N$. Furthermore, we take $f_{2,j}= P_{2,j}=0$ if $g=0$.
		
	\medskip
	
	\textit{Step one:} We select the least squares polynomials ${(Q_{1,j,r}, Q_{2,j,r}) \in \mathbf{P}_{\lfloor \beta \rfloor -1} \times \mathbf{P}_{\lfloor \beta +\theta_+-\theta_- \rfloor -2}}$, which are characterized by the orthogonality conditions
	\begin{equation}\label{eq orthogonality der blow-up k}
		\int_{B_r} (v_j -Q_{1,j,r} d_j^{\theta_+}- Q_{2,j,r}d_j^{\theta_-+1})(P_1(x)d_j^{\theta_+}+P_2(x)d_j^{\theta_-+1})=0,
	\end{equation}
	for any $(P_1, P_2)\in  \mathbf{P}_{\lfloor \beta \rfloor -1} \times \mathbf{P}_{\lfloor \beta +\theta_+-\theta_- \rfloor -2}$ for $r>0$ and $j\in \N$.\\

	Let us define
	$$\theta(r) =  \sup_{\rho >r }\sup_{j\in \N} \frac{1}{r^{{{\theta_+} + \beta-1}}} \Big\Vert v_j -Q_{1,j,r} d_j^{\theta_+}- Q_{2,j,r}d_j^{\theta_-+1}\Big\Vert_{L^\infty(B_r)}.$$
	
	By Lemma \ref{eq unboundedness of theta} we have that $\lim_{r\to 0^+} \theta(r)=\infty$. Thus, we can pick a sequence $\{r_j\}_{j\in \N}$  with $r_j\to 0^+$ as $j\to \infty$ such that 	
	\begin{equation}\label{eq blowup sequence der}
		w_{j}(x):= \frac{ v_{j}(r_j x)-Q_{1,j,r_j}(r_jx)d_j^{\theta_+}(r_j x)-Q_{2,j,r_j}(r_jx)d_j^{\theta_-+1}(r_j x)}{r_j^{{{\theta_+}  +\beta}}\theta(r_j)},
	\end{equation}
	satisfies $ \Vert w_j\Vert_{L^\infty(B_1)} \in [\frac{1}{2},1]$ for $j\in \N$.\\		
	
	Additionally, Lemma \ref{lemma growth property} provides the growth bound
	\begin{equation}\label{eq growthcond der}
		\Vert w_j \Vert_{L^\infty(B_R)}\leq C R^{{\theta_+}+\beta-1}.
	\end{equation}	
	
	\medskip
	
	\noindent{\it Step two:} We proceed to show that the sequence $\{w_j\}_{j\in \N}$ satisfies, up to a small perturbation,  a PDE of the form \eqref{eq boundary linearized pde statement}. Let us start noticing that since we fall under the hypothesis of Lemma \ref{lemma growth property}, we can estimate the Laplacian of the auxiliary term $Q_{1,j,r} d_j^{\theta_+}+ Q_{2,j,r}d_j^{\theta_-+1}$ as follows
	\begin{eqnarray}\nonumber
	\frac{1}{\theta(r)r^{{\theta_+}+\beta-1}}\Delta  \Big(Q_{1,r}(rx)d_j^{\theta_+}(r x)+Q_{2,r}(rx)d_j^{\theta_-+1}(r x)\Big)	&=&\frac{1}{\theta(r)r^{{\theta_+}+\beta-1}}\frac{\kappa r^2}{d_j^2(rx)}Q_{1,r}(rx)d_j^{\theta_+}(r x)\\ \nonumber
	&&+\frac{1}{\theta(r)r^{{\theta_+}+\beta-1}}\frac{\kappa r^2}{d_j^2(rx)}Q_{2,r}(rx)d_j^{\theta_-+1}(r x)\\\nonumber
		&&+P_{1,r}(x)\Big( \frac{d_j(rx)}{r}\Big)^{\theta_+-1}\\\nonumber
		&& +P_{2,r}(rx) \Big( \frac{d_j(rx)}{r}\Big)^{\theta_--1}\\ \label{eq first order error linear}
		&&+m_*(r,x)
	\end{eqnarray}
where $P_{i,r} \in \mathbf{P}_{M}$ for $i=1,2$ for some $M \in \N$ only depending on $\beta$ and $\kappa$ and where $|m_*(x,r)|\leq Cm(r)$ with $m$ depending only on $\beta$ and $\kappa$ and such that $m(r)\to 0$ as $r\to 0^+$.\\
	
	By combining \eqref{eq seq linearized BVP}, \eqref{eq first order error linear},  and exploiting the uniform regularity of the functions $f_{i,j}$ we deduce	
	\begin{eqnarray}\notag
		L_\kappa ( w_j) &=& \Big( \tilde{Q}_{1,r}(x)+g_{1,j}(x) \Big)\Big( \frac{d_j(rx)}{r}\Big)^{\theta_+-1} +\Big( \tilde{Q}_{2,r}(rx) +g_{2,j}(x)\Big) \Big( \frac{d_j(rx)}{r}\Big)^{\theta_--1}\\\label{eq almost solution linear}
		&&+g_{3,j}(x),
	\end{eqnarray}
	where $\tilde{Q}_{i,j} \in  \mathbf{P}_{M }$ for some $M\in \N$ and $\Vert g_{i,j} \Vert_{L^\infty(B_2\cap \overline{\Omega_j})}\to 0$ as $j\to \infty$ for $i=1,2,3$.\\
	
		On the other hand, let us notice that the hypotheses \eqref{higher bdry-cond schauder} force $w_j$ to satisfy the boundary conditions \eqref{bdry-cond statement}. With the aim of applying Lemma \ref{lemma basic-boundary regularity}, we take $\varepsilon \in (0,1+\theta_--\theta_+)$ in \eqref{eq boundary estimate} so that the right hand side in  \eqref{eq almost solution linear} remains uniformly bounded after multiplying by $d_j^{2-\theta_+-\e}$. Thus,  in virtue of Lemma \ref{lemma uni approx poly} and Lemma \ref{lemma basic-boundary regularity}, the solutions $\{w_j\}_{j\in \N}$ to the sequence of problems \eqref{eq almost solution linear lower} are equicontinuous and, therefore, satisfy that $w_j \to w_0$ locally uniformly. So, thanks to  Lemma \ref{lemma uni approx poly},  and \eqref{eq growthcond der low} we deduce that			
	\begin{equation}\label{eq limiting equation linear}
		\begin{cases}
			L_\kappa w_0 = Q_1(x)(x_n)_+^{\theta_+-1}+Q_2(x)(x_n)_+^{\theta_--1} \quad \mbox{ in $\{x_n>0\}$ },\\
			\Vert w_0\Vert_{L^\infty(B_R\cap \R^n_+)} \leq CR^{{\theta_+}+\beta-1}
		\end{cases}
	\end{equation}
	with $Q_i \in  \mathbf{P}_{M}$ for some $M\in \N$ and for $i=1,2$ and with $Q_2=0$ if $g=0$. Additionally, thanks to the uniform convergence, the inequality \eqref{eq boundary estimate} satisfied by each $w_j$ implies that
	\begin{equation}\label{eq boundary linear}
		|w_0|\leq C x_n^{\theta_+}.
	\end{equation}
	
	\medskip	
	\noindent{\it Step 3:} Since $w_0$ satisfies \eqref{eq limiting equation linear} and \eqref{eq boundary linear}, Lemma  \ref{lemma general Liouville} applies, implying that
	$$w_0= P_1(x)(x_n)_+^{{\theta_+}}+P_2(x)(x_n)_+^{{\theta_-+1}},$$ 
	with $P_1 \in \mathbf{P}_{\lfloor \beta\rfloor-1}$ and $P_2 \in \mathbf{P}_{\lfloor \beta +\theta_+-\theta_-\rfloor-2}$, with $P_2=0$ if $g=0$. However, taking limits in the orthogonality condition \eqref{eq orthogonality der blow-up k} as $j\to \infty$, yields	
	\begin{equation*}
		\int_{B_1} w_0  \Big(Q_1(x)x_n^{{\theta_+}}+Q_2(x)x_n^{{\theta_-+1}}\Big)=0,
	\end{equation*}
	for all  $Q_1 \in \mathbf{P}_{\lfloor \beta\rfloor-1}$ and $Q_2 \in \mathbf{P}_{\lfloor \beta +\theta_+-\theta_-\rfloor-2}$. Thus, $w_0=0$ contradicting $\Vert w_0\Vert_{L^\infty(B_1)}\geq \frac{1}{2}.$
\end{proof}

The following technical lemma shows how to translate estimates of the form \eqref{eq linear higher order} into regularity for the quotients $v/d^\mu$. 

\begin{lemma}\label{lemma quotient}
		Let $\mu>0$, let $\beta >1$ with $\beta \notin \N$, and let $\Omega \subset \R^n$ with $\partial \Omega \cap B_1$ a $C^{\beta}$ surface. Let $v \in C^{\beta}(\Omega \cap B_1)$ and let us assume that for every $z \in  \Omega \cap B_1$ there exists $Q_z \in  \mathbf{P}_{\lfloor \beta-1\rfloor}$ such that for every $y_0 \in B_\frac{1}{2}(z)\cap \Omega$ satisfying $\text{dist}(y_0)= |y_0-z|=2 r$
		\begin{equation}\label{eq uniform approx} 
			[v-Q_zd^\mu]_{C^{\beta-1}(\overline{B_r(y_0)})}\leq C r^{\mu},
		\end{equation}
		and
		\begin{equation}\label{eq linfty bound2}  
		\Vert v-Q_zd^\mu\Vert_{L^\infty({B_r(y_0)})}\leq C r^{\mu+\beta-1},
		\end{equation}

		Then, 
		\begin{equation}\label{eq reg quotient} 
			\Big\Vert \frac{v}{d^{\mu}}\Big\Vert_{C^{\beta-1}(\overline{\Omega} \cap B_{1/2})} \leq C.
		\end{equation}
	
	More in general, if there is an integer $k>\beta-1$ such that $v\in C^k(\Omega \cap B_1)$ and such that for every $\lambda \in \N^n$ with $|\lambda| =k$,
	\begin{equation}\label{eq uniform decay}
	\Vert \partial^\lambda(v-Q_zd^\mu)\Vert_{L^\infty(B_r(y_0))}\leq C_k r^{\mu+\beta-1-k},
	\end{equation}
 then
	\begin{equation}\label{eq higherder quotient 0}
		|\partial^\lambda\big(v/d^{\mu}\big)\big| \leq C_kd^{\beta-1-k}\quad \textrm{in}\quad \Omega\cap B_\frac{1}{2}.
	\end{equation}

\end{lemma}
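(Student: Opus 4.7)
The plan is to convert the pointwise polynomial approximation $v\approx Q_z d^\mu$ into H\"older regularity of the quotient $v/d^\mu$ up to $\partial\Omega$, via three steps: first divide by $d^\mu$ on each Whitney-type ball $B_r(y_0)$; next prove a stability estimate for the family $\{Q_z\}_{z\in\partial\Omega}$; finally patch the local estimates in the spirit of a Campanato argument.

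For the local division, on $B_r(y_0)$ with $\dist(y_0)=|y_0-z|=2r$ one has $d\sim r$, whence $\|d^{-\mu}\|_{L^\infty(B_r(y_0))}\leq Cr^{-\mu}$, $[d^{-\mu}]_{C^{\beta-1}(B_r(y_0))}\leq Cr^{-\mu-(\beta-1)}$, and more generally $|\partial^jd^{-\mu}|\leq Cr^{-\mu-j}$. Applying the product rule for H\"older seminorms to $(v-Q_zd^\mu)\cdot d^{-\mu}$ together with \eqref{eq uniform approx}--\eqref{eq linfty bound2} gives
\[
\big\|(v-Q_zd^\mu)/d^\mu\big\|_{L^\infty(B_r(y_0))}\leq Cr^{\beta-1},\qquad \big[(v-Q_zd^\mu)/d^\mu\big]_{C^{\beta-1}(B_r(y_0))}\leq C.
\]
Hence $v/d^\mu=Q_z+\varphi$ with $\varphi$ uniformly bounded in $C^{\beta-1}$ and $L^\infty$-vanishing at rate $r^{\beta-1}$; the limit $(v/d^\mu)(z):=Q_z(z)$ extends $v/d^\mu$ continuously to $\partial\Omega\cap B_{1/2}$, with the pointwise decay $|v/d^\mu(x)-Q_z(x)|\leq C|x-z|^{\beta-1}$.

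For the stability of the polynomial family, given $z_1,z_2\in\partial\Omega$ at distance $\delta:=|z_1-z_2|$ small, pick $y_0$ with $\dist(y_0)\sim\delta$ and $|y_0-z_i|\sim\delta$, and apply the previous step for both $z_1$ and $z_2$ on $B_{c\delta}(y_0)$: this produces $\|Q_{z_1}-Q_{z_2}\|_{L^\infty(B_{c\delta}(y_0))}\leq C\delta^{\beta-1}$. Since $Q_{z_1}-Q_{z_2}\in\mathbf{P}_{\lfloor\beta-1\rfloor}$, equivalence of norms on this finite-dimensional space upgrades the bound to $|\partial^\alpha(Q_{z_1}-Q_{z_2})|\leq C\delta^{\beta-1-|\alpha|}$ in $B_{c\delta}(y_0)$. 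This controls the H\"older dependence of $z\mapsto Q_z$ and, combined with the local bounds from the previous step, a standard Campanato-type patching yields the global estimate \eqref{eq reg quotient}.

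For the higher-derivative bound \eqref{eq higherder quotient 0} with $|\lambda|=k>\beta-1$, write on $B_r(y_0)$
\[
\partial^\lambda(v/d^\mu)=\partial^\lambda\!\big((v-Q_zd^\mu)/d^\mu\big)+\partial^\lambda Q_z.
\]
The second term vanishes because $\deg Q_z\leq\lfloor\beta-1\rfloor<k$. The first is controlled by Leibniz, using \eqref{eq uniform decay}, intermediate derivative bounds $\|\partial^\alpha(v-Q_zd^\mu)\|_{L^\infty(B_r(y_0))}\leq Cr^{\mu+\beta-1-|\alpha|}$ for $|\alpha|<k$ (recovered from \eqref{eq linfty bound2}--\eqref{eq uniform decay} by interpolation), together with $|\partial^jd^{-\mu}|\leq Cr^{-\mu-j}$, yielding $|\partial^\lambda(v/d^\mu)(y_0)|\leq Cr^{\beta-1-k}\sim d(y_0)^{\beta-1-k}$. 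The main technical obstacle throughout is the stability step: guaranteeing that the polynomial-valued map $z\mapsto Q_z$ depends on $z$ with the correct H\"older exponent, which is what allows the scale-invariant local bounds to be glued into a uniform estimate up to the boundary.
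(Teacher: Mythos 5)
Your proposal is correct in substance and shares the key computations with the paper's proof: the bounds $|\partial^{\nu}d^{-\mu}|\leq Cr^{-\mu-|\nu|}$ on Whitney balls, interpolation between \eqref{eq uniform approx} and \eqref{eq linfty bound2} to control intermediate derivatives of $v-Q_zd^\mu$, and, for \eqref{eq higherder quotient 0}, the observation that $\partial^\lambda Q_z=0$ when $|\lambda|=k>\lfloor\beta-1\rfloor$, which is exactly the paper's second step. The organization of the first part, however, is genuinely different: the paper never tracks the dependence $z\mapsto Q_z$. It writes $v/d^{\mu}=Q_z+(v-Q_zd^\mu)d^{-\mu}$ and estimates directly the increment $\partial^\lambda(v/d^{\mu})(x_1)-\partial^\lambda(v/d^{\mu})(x_2)$ for $|\lambda|=\lfloor\beta\rfloor-1$ on each Whitney ball, using that $\partial^\lambda Q_z$ is constant and hence cancels in the difference; this gives a uniform $C^{\beta-\lfloor\beta\rfloor}$ seminorm bound for the top-order derivatives of $v/d^{\mu}$ on all Whitney balls, and \eqref{eq reg quotient} then follows from the covering argument of \cite{AR} (see also \cite{Kuk22}). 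Your route -- divide first, prove H\"older stability of the family $\{Q_z\}$, and conclude by Campanato-type patching -- is the classical Whitney/Campanato alternative: it yields the cleaner picture of a boundary Taylor expansion of $v/d^{\mu}$ with coefficients H\"older in $z$, at the price of the extra stability step that the paper's cancellation trick avoids.

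Two points in your write-up need repair, though both are fixable. First, the hypothesis provides the expansion for a fixed $z$ only on balls $B_r(y_0)$ with $\dist(y_0)=|y_0-z|=2r$, i.e.\ on a nontangential region attached to $z$; so in the stability step you cannot apply it ``for both $z_1$ and $z_2$ on $B_{c\delta}(y_0)$'' with a single $y_0$, since $\dist(y_0)=|y_0-z_i|$ cannot hold exactly for two distinct boundary points. Take instead two Whitney balls, one attached to each $z_i$, of radius comparable to $\delta=|z_1-z_2|$; they overlap in a ball of radius $c\delta$ on which $d\sim\delta$, and there the triangle inequality together with equivalence of norms on $\mathbf{P}_{\lfloor\beta-1\rfloor}$ gives the coefficient bounds you state. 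Second, and for the same reason, the pointwise decay $|v/d^{\mu}(x)-Q_z(x)|\leq C|x-z|^{\beta-1}$ for \emph{all} $x\in\Omega$ near $z$, which your patching uses, does not follow from your step 1 alone: for tangentially approaching $x$ it requires the stability estimate (compare $Q_z$ with $Q_{z_x}$, $z_x$ the nearest boundary point to $x$), so steps 1 and 2 must be interleaved rather than sequential. With these adjustments the argument goes through and yields the same conclusion as the paper.
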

\begin{proof}
	The proof of the first part of this lemma is essentially contained in the proof of \cite[Theorem 1.4]{AR}. Let $x_1, x_2\in B_r(y_0)$ and let $\lambda \in \N^n$ be a multi-index with $|\lambda| =  \lfloor \beta\rfloor-1$. Let $Q_z$ as in \eqref{eq uniform approx}, arguing as in the proof of \cite[Theorem 1.4]{AR}, we have the identity
	\begin{eqnarray}\notag
		\partial^\lambda (d^{-\mu}u)(x_1)-\partial^\lambda (d^{-\mu}u)(x_2)&=& \sum_{\alpha \leq \lambda} [\partial^\alpha (u-Q_zd^\mu)(x_1) - \partial^\alpha (u-Q_zd^\mu)(x_2)]\partial^{\lambda-\alpha}d^{-\mu}(x_1)\\\label{eq RA}
		&&+ \partial^\alpha (u-Q_zd^\mu)(x_2)[\partial^{\lambda-\alpha}d^{-\mu}(x_1)-\partial^{\lambda-\alpha}d^{-\mu}(x_2)],
	\end{eqnarray}
where we have used that $\partial^\lambda Q_z(x_1) = \partial^\lambda Q_z(x_2)$ since $Q_z$ has degree $\lfloor \beta-1\rfloor = \lambda$.\\

We start bounding the terms in the right hand side of \eqref{eq RA}, recalling from  Lemma \ref{lemma generalized distance} that
\begin{equation}\label{eq distance decay}
	|\partial^{\lambda-\alpha}d^{-\mu}(x_1)|\leq Cr^{|\alpha|-|\lambda|-\mu},
\end{equation} 
for any $\alpha \leq \lambda$. On top of this, we can combine \eqref{eq distance decay} with the mean value theorem to deduce
\begin{equation}\label{eq distance decay double}
	|\partial^{\lambda-\alpha}d^{-\mu}(x_1)-\partial^{\lambda-\alpha}d^{-\mu}(x_2)|\leq Cr^{|\alpha|-\mu-\beta}|x_1-x_2|^{\beta-\lfloor \beta\rfloor}.
\end{equation}
		
On the other hand, we can use interpolation \cite[Lemma 6.32]{GT} to deduce from \eqref{eq uniform approx} and \eqref{eq linfty bound2} that
\begin{equation}\label{eq holder interpol}
		[\partial^\alpha(v-Q_zd^\mu)]_{C^{\beta- \lfloor \beta\rfloor}(\overline{B_r(y_0)})}\leq C r^{\mu-|\alpha|+ \lfloor \beta\rfloor-1}
\end{equation}
for any $\alpha \leq \lambda$. In the same vein, we have that
\begin{equation}\label{eq interpol 2}
	\sum_{|\alpha| \leq \lfloor \beta\rfloor-1} r^{|\alpha|}	\Vert \partial^{\alpha}(v-Q_zd^\mu)\Vert_{L^\infty(B_r(y_0))}\leq C r^{\mu+\beta-1}.
\end{equation}
	
Hence,  by combining  \eqref{eq RA}, \eqref{eq distance decay}, \eqref{eq holder interpol}, and \eqref{eq interpol 2} we deduce
	\begin{eqnarray*}
	|\partial^\lambda (d^{-\mu}u)(x_1)-\partial^\lambda (d^{-\mu}v)(x_2)|\leq C|x_1-x_2|^{\beta-\lfloor \beta\rfloor},
\end{eqnarray*}
from where \eqref{eq reg quotient} follows.\\

Let us assume now that \eqref{eq uniform decay} holds for some $k > \lfloor \beta\rfloor-1$.  Let $x\in B_r(y_0)$ and $\lambda \in \N^n$ with $|\lambda| =  k$. Since $\partial^{\lambda}Q_z=0$, we can proceed as in \eqref{eq RA} to get
	\begin{eqnarray}\label{eq expansion der}
	\partial^\lambda (d^{-\mu}u)(x) = \partial^\lambda (d^{-\mu}v-Q_z)(x) = \sum_{\alpha \leq \lambda} \partial^\alpha (v-d^{\mu}Q)(x)\, \partial^{\lambda-\alpha} d^{-\mu}.
\end{eqnarray}

Using again \cite[Lemma 6.32]{GT}, we can interpolate \eqref{eq linfty bound2} and \eqref{eq uniform decay} to deduce
\begin{equation}\label{eq interpol 3}
	\sum_{|\alpha| \leq k} r^{|\alpha|}	\Vert \partial^{\alpha}(v-Q_zd^\mu)\Vert_{L^\infty({B_r(y_0)})}\leq C r^{\mu+\beta-1-k}.
\end{equation}

Altogether \eqref{eq distance decay}, \eqref{eq expansion der}, and \eqref{eq interpol 3} implies 
\begin{equation*}
		|\partial^\lambda (d^{-\mu}v)(x)|\leq C r^{\beta-1-k}\leq C d^{\beta-1-k}(x),
\end{equation*}
where the last inequality follows from the comparability of $d(y_0)$ and $r$.
\end{proof}

\begin{proof}[Proof of Theorem \ref{thm schauder}]
Let us start by normalizing both sides of \eqref{eq boundary linearized pde} dividing by $\|f\|_{C^{k,\alpha}(\overline\Omega\cap B_1)} + \|v\|_{L^\infty(\Omega \cap B_1)}$. Thus, in virtue of Lemma \ref{lemma quotient}, the proof will follow from verifying \eqref{eq uniform approx}. Let $\beta = k+1+\alpha$ and let us invoke Theorem \ref{theorem initial reg linear} if $k=1$ and Theorem \ref{thm higher reg linear} when $k\ge 2$, to guarantee the existence of  a polynomial $Q_{z} \in \mathbf{P}_{\lfloor \beta \rfloor-1}$ for each  $z \in \partial \Omega \cap B_1$ such that
	\begin{eqnarray}\label{eq approx higher mu} 
		\Big|v(x)-Q_{z}(x)d^{\theta_+}(x)\Big|\leq C |x-z|^{{{\theta_+} + \beta-1}}\quad  \mbox{for $x\in \Omega\cap  B_{1}(z)$}.
	\end{eqnarray}		
	
	For any $y_0 \in  \Omega \cap B_\frac{1}{2}(z)$ such that $\text{dist}(y_0)= |y_0-z|=2 r$ we define the rescaling
	\begin{equation*}\label{eq rescaling}
	v_r(x) = r^{{{-\theta_+} - \beta+1}}v(y_0+rx)-  r^{1 - \beta}Q_{z}(y_0+rx)b_r(x)^{\theta_+},
\end{equation*}
where  $b_r(x)= \frac{d(y_0+rx)}{r}$. Notice that \eqref{eq approx higher mu} implies the bound $\Vert v_r\Vert_{L^\infty(B_1)}\leq C$. On the other hand, the equation for $v_r$ can be written as 
	\begin{equation}\label{eq rescaled operator}
	 -\Delta v_r+ a_r(x) v_r=  r^{1 - \beta}\Big(f(y_0+rx)b_r(x)^{\theta_+-2}-  	r^2L_\kappa\Big(Q_{z}(y_0+r\cdot)b_r^{\theta_+}\Big)(x)\Big) 
	\end{equation}
	with $a_r(x)=\frac{\kappa}{b_r(x)^2}$. On the other hand, Lemma \ref{lemma generalized distance} implies that $\frac{1}{C}\leq |b_r|\leq C$ and $| D^{\lambda} b_r| \leq C_{\lambda}$ for $x\in B_1$ and for any multi-index $\lambda$.  So, if $\beta \in (1,2)$, classical gradient estimates yield
	\begin{eqnarray}\notag
		[ v_r ]_{C^{\beta-1}(B_{1/2})}&\leq& C\Big( \Vert v_r\Vert_{L^\infty(B_1)}+  	\Vert r^{-\beta+1}f(y_0+r \cdot)b_r^{\theta_+-2}\Vert_{L^\infty(B_1)}\Big)\\ \label{eq linf int schauder}
		&&+ 	C r^{{ 3-\beta}}\Vert L_\kappa(Q_{z}(y_0+r\cdot)b_r^{\theta_+})\Vert_{L^\infty(B_1)}.
	\end{eqnarray}
Moreover, we have from Lemma \ref{lemma generalized distance} that
\begin{equation}
	|L_\kappa (Q_z(y_0+r\cdot)b_r)|\leq C b_r(x)^{\beta+\theta_+-3}\leq C
\end{equation}
and, by the $C^{\beta-1}$ regularity on $f$, $|f(y_0+rx)|\leq Cr^{\beta-1}$. Thus, by rescaling \eqref{eq linf int schauder}, we have
\begin{eqnarray}\label{eq int schauder lower reg}
	r^{-\theta}	[ v ]_{C^{\beta-1}(B_{r/2})}\leq C.
\end{eqnarray}
	
If $\beta>2$, since $f$ vanishes identically on the boundary, we can write $f d^{\theta_+-2}= f_0d^{\theta_+-1}$ with $f_0 \in C^{\beta-2}(\overline{\Omega}\cap B_1)$. Additionally, in virtue of Lemma \ref{lemma generalized distance}, we have that
 \begin{equation}\label{eq L distance}
	L_\kappa(Q_{z}d^{\theta_+})(x)=- g(x)d(x)^{\theta_+-1}
\end{equation}
for some $g \in C^{\beta-2}(\overline{\Omega}\cap B_1)$. Altogether, these observations allow us to rewrite \eqref{eq rescaled operator} as follows
	\begin{equation*}
	-\Delta v_r+ a_r(x) v_r=  r^{2 - \beta}\Big(f_0(y_0+rx)+g(y_0+rx)\Big)b_r(x)^{\theta_+-1},
\end{equation*}
which can be rewritten in the form
	\begin{equation}\label{eq rescaled operator 2}
	\frac{-1}{b_r(x)^{\theta_+-1}}\Delta v_r+\frac{a_r(x)}{b_r(x)^{\theta_+-1}} v_r=  r^{2 - \beta}\Big(f_0(y_0+rx)+g(y_0+rx)\Big).
\end{equation}
Thus, since $b_r \in C^\beta(B_1)$ and  $\frac{1}{C}\leq |b_r|\leq C$, we can apply standard interior Schauder estimates to \eqref{eq rescaled operator 2} (see, e.g., \cite{Sim97}) to deduce
\begin{eqnarray*}
	[ v_r ]_{C^{\beta-1}(B_{1/2})}\leq C\Big( \Vert v_r\Vert_{L^\infty(B_1)}+ r^{2-\beta} 	[f(y_0+r \cdot)+g(y_0+r\cdot)]_{C^{\beta-2}(B_1)}\Big)
\end{eqnarray*}
After rescaling, we deduce
\begin{eqnarray}\label{eq int schauder rescaled}
	r^{-\theta}	[ v ]_{C^{\beta-1}(B_{r/2})}\leq C\Big(1+ 	[f+g]_{C^{\beta-2}(B_r(y_0))}\Big)\leq C,
\end{eqnarray}
which concludes the proof.
\end{proof}

We finish this section deriving a modified version of Theorem \ref{thm schauder} that will be applied to $u-c_\gamma d^{\frac{2}{2-\gamma}}$ in the next section.
	
\begin{corollary}\label{corollary schauder}
	Let $\kappa \geq -\frac14$, let $\mu =\theta_+$ if $\kappa \geq 0$ and $\mu \in \{\theta_+,\theta_-\}$ when $\kappa<0$.  Let $\Omega \subset \R^n$ be a $C^{\beta}$ domain with $\beta>2$, $f\in C^{\beta-2}(\overline\Omega\cap B_1)$, and let $d$ be the regularized distance to $\partial\Omega$ given by Lemma \ref{lemma generalized distance}.\\

	Let $v \in C(\Omega \cap B_1)$ be any solution to 
	\begin{equation}\label{eq boundary linearized pde corollary}
		-\Delta v + \kappa\,\frac{v}{d^2}=fd^{\mu-1}\quad \text{in}\quad \Omega\cap B_1.
	\end{equation}
	
	If it satisfies
	\begin{equation}\label{bdry-cond higher}
	\displaystyle \lim_{x\to \partial\Omega} \frac{v}{d^{\theta_+}} = 0,
\end{equation}
	then, $v\in C^{\beta-2}(\overline\Omega\cap B_{1/2})$ and 
	\[
	\left\|v/{d^{\mu+1}}\right\|_{C^{\beta-2}(\overline\Omega \cap B_{1/2})}\leq C\big(\|f\|_{C^{\beta-2}(\overline\Omega\cap B_1)} + \|v\|_{L^\infty(\Omega \cap B_1)} \big),
	\]
	where $C>0$ depends only on $\Omega$, $k$, $\alpha$, $n$, and $\kappa$.\\
	
	Furthermore, if there is $k>\beta-2$ such that $v\in C^k(\Omega\cap B_1)$, $f\in C^k(\Omega\cap B_1)$ and 
	\begin{equation}\label{eq decaying f}
			|\partial^{\alpha} f| \leq C_{\alpha} d^{\beta-2 -|\alpha|}, \quad \mbox{ in $\Omega\cap B_1$},
	\end{equation}
	for every $|\alpha| \in \{\lfloor \beta \rfloor-1,\cdots, k-2\}$, then 
	\begin{equation}\label{eq higherder quotient 1} 
		|\partial^\lambda\big(v/d^{\mu+1}\big)\big| \leq C_kd^{\beta-2-|\lambda|}\quad \textrm{in}\quad \Omega\cap B_\frac{1}{2}.
	\end{equation}
for every $|\lambda| \in  \{\lfloor \beta \rfloor-1,\cdots, k\}$.
\end{corollary}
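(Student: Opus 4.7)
The plan is to run the blow-up contradiction scheme used to prove Theorem \ref{thm higher reg linear}, but with the ansatz $P\,d^{\mu+1}$ (rather than $Q_1 d^{\theta_+}+Q_2 d^{\theta_-+1}$), and then to feed the resulting pointwise expansion into Lemma \ref{lemma quotient} exactly as in the final step of the proof of Theorem \ref{thm schauder}. The extra factor of $d$ in the leading ansatz is forced by the strengthened boundary condition \eqref{bdry-cond higher}, which at the blow-up limit corresponds to condition (2) of Lemma \ref{lemma general Liouville} rather than the weaker condition (1) used in Theorem \ref{thm higher reg linear}.

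The heart of the proof is to establish, for each $x_0 \in \partial\Omega \cap B_\rho$, a pointwise expansion
\[
|v(x) - P_{x_0}(x)\,d^{\mu+1}(x)| \leq C\,|x-x_0|^{\theta_+ + \beta - 1}, \qquad x \in \Omega \cap B_\rho(x_0),
\]
where $P_{x_0}$ is a polynomial of degree $\lfloor\beta\rfloor - 2$ when $\mu = \theta_+$ and of degree $\lfloor\beta + \theta_+ - \theta_-\rfloor - 2$ when $\mu = \theta_-$. I would argue by contradiction: choose the least-squares projection of $v_j$ onto polynomials of the prescribed degree times $d^{\mu+1}$, produce a divergent scale factor $\theta(r_j)\to\infty$, and rescale as in \eqref{eq blowup sequence der} to obtain a bounded blow-up sequence $w_j$. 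Lemma \ref{lemma basic-boundary regularity} and Lemma \ref{lemma uni approx poly} provide uniform equicontinuity, so $w_j \to w_0$ locally uniformly on $\{x_n > 0\}$. The boundary condition \eqref{bdry-cond higher} is preserved under the rescaling and passes to $w_0$ as condition (2) of Lemma \ref{lemma general Liouville}; combined with the limiting equation $L_\kappa w_0 = Q(x)\,x_n^{\mu - 1}$, it forces $w_0 = P(x)\,x_n^{\mu+1}$. Indeed, for $\mu = \theta_+$ condition (2) forces $P_1|_{x_n = 0} \equiv 0$ so that $P_1\,x_n^{\theta_+} = \tilde P_1\,x_n^{\theta_+ + 1}$; for $\mu = \theta_-$ the vanishing of $Q_1$ in the limit combined with condition (2) eliminates any $x_n^{\theta_+}$ contribution entirely. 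The orthogonality relations inherited from the least-squares projection then yield $w_0 \equiv 0$, contradicting $\|w_0\|_{L^\infty(B_1)}\geq \tfrac12$.

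Given the pointwise expansion, Lemma \ref{lemma quotient} applied with $\mu$ replaced by $\mu+1$ and $\beta$ replaced by $\beta-1$ yields $\|v/d^{\mu+1}\|_{C^{\beta-2}(\overline\Omega \cap B_{1/2})} \leq C$. The $L^\infty$ hypothesis \eqref{eq linfty bound2} is immediate from the expansion, while the $C^{\beta-2}$ seminorm hypothesis \eqref{eq uniform approx} is obtained by considering the rescaling $v_r(x) = r^{-\mu-\beta+1}[v(y_0+rx) - P_{x_0}(y_0+rx)\,b_r(x)^{\mu+1}]$ with $b_r(x) = d(y_0+rx)/r$ and invoking interior Schauder estimates on the resulting PDE with $C^{\beta-2}$ coefficients, exactly as in the final paragraphs of the proof of Theorem \ref{thm schauder}. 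The derivative bound \eqref{eq higherder quotient 1} then follows by iterating: the decay hypothesis \eqref{eq decaying f} together with the same rescaling/Schauder scheme produces the pointwise control \eqref{eq uniform decay}, which the second half of Lemma \ref{lemma quotient} converts into the desired estimate on $\partial^\lambda(v/d^{\mu+1})$.

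The main obstacle is the blow-up step: one must verify that it actually produces the announced Liouville limit. The Laplacian of $P_{x_0}(y_0+r\cdot)\,d^{\mu+1}(y_0+r\cdot)$ generates, through Lemma \ref{lemma generalized distance}, lower-order source terms of the form (polynomial)$\times d^{\mu-1}$ which must be carefully tracked and absorbed into the right-hand side of the rescaled equation, analogously to \eqref{eq first order error linear}. Only after this bookkeeping does the limiting equation take the clean form $L_\kappa w_0 = Q(x)\,x_n^{\mu-1}$ required to apply Lemma \ref{lemma general Liouville} under condition (2).
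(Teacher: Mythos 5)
Your overall strategy is viable and uses exactly the paper's toolkit, but it is organized differently: the paper does not re-run the blow-up for this corollary. For $\mu=\theta_+$ it simply invokes Theorem \ref{thm schauder} (whose proof, for $\beta>2$, already handles right-hand sides of the form $f d^{\theta_+-1}$ with $f\in C^{\beta-2}$ via Theorem \ref{thm higher reg linear}) to write $v=h\,d^{\theta_+}$ with $h\in C^{\beta-1}$, and then uses \eqref{bdry-cond higher} only to see $h|_{\partial\Omega}\equiv 0$, so that $h/d\in C^{\beta-2}$; for $\mu=\theta_-$ it invokes Theorem \ref{thm higher reg linear} and uses the stronger boundary condition (i.e.\ condition \eqref{eq higher gamma liouville} in Lemma \ref{lemma general Liouville}) to discard the $d^{\theta_+}$ block, and then proceeds exactly as in the proof of Theorem \ref{thm schauder} (rescaled interior Schauder plus Lemma \ref{lemma quotient}); the second statement \eqref{eq higherder quotient 1} is obtained, as you propose, by the same rescaling with \eqref{eq decaying f}, the analogous decay \eqref{eq bry control g} for the error generated by $L_\kappa(Q_z d^{\mu+1})$, and the second half of Lemma \ref{lemma quotient}. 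So your re-derivation of the expansion by a blow-up with ansatz $P\,d^{\mu+1}$ and Liouville condition \eqref{eq higher gamma liouville} is a legitimate, somewhat more self-contained variant of the same argument, and your treatment of the quotient and of the higher derivatives coincides with the paper's.

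There is, however, one concrete step that fails as you stated it: in the case $\mu=\theta_-$ you claim the expansion at order $\theta_++\beta-1$ with $P_{x_0}\in\mathbf{P}_{\lfloor\beta+\theta_+-\theta_-\rfloor-2}$. That order is not reachable under the hypotheses of the corollary, where $f$ is only $C^{\beta-2}$ (Theorem \ref{thm higher reg linear} requires the coefficient of $d^{\theta_--1}$ to be $C^{\beta-2+\theta_+-\theta_-}$ precisely for this reason). Quantitatively, in your blow-up normalized by $\theta(r_j)\,r_j^{\theta_++\beta-1}$, the contribution of the source $f\,d^{\theta_--1}$ reads $\frac{f(r_jx)}{\theta(r_j)\,r_j^{\beta-2+\theta_+-\theta_-}}\bigl(\tfrac{d(r_jx)}{r_j}\bigr)^{\theta_--1}$, and after subtracting the Taylor polynomial of $f$ (degree $\lfloor\beta\rfloor-2$, absorbed into the ansatz via Lemma \ref{lemma one dimensional problem}) the remainder is only $O\bigl((r_j|x|)^{\beta-2}\bigr)$, leaving an uncontrolled factor $r_j^{-(\theta_+-\theta_-)}/\theta(r_j)$; since $\theta(r)\to\infty$ without any rate, this term need not vanish and the limiting equation is not of the clean form needed for Lemma \ref{lemma general Liouville}. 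The fix is simply to target the lower order $(\mu+1)+(\beta-2)=\theta_-+\beta-1$ with $P_{x_0}\in\mathbf{P}_{\lfloor\beta\rfloor-2}$ (equivalently, apply the machinery of Theorem \ref{thm higher reg linear} with $\beta$ replaced by $\beta-(\theta_+-\theta_-)$): then the rescaled remainder is $O(|x|^{\beta-2})/\theta(r_j)\to 0$, the Liouville lemma with condition \eqref{eq higher gamma liouville} and $Q_1=0$ still gives $w_0=P_2\,x_n^{\theta_-+1}$, and this lower order is exactly what Lemma \ref{lemma quotient} needs to conclude $v/d^{\mu+1}\in C^{\beta-2}$ — note that the expansion the paper actually feeds into the second part of its proof, \eqref{eq approx higher mu+1}, is precisely at order $(\mu+1)+\beta-2$ with $Q_z\in\mathbf{P}_{\lfloor\beta\rfloor-2}$. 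With this adjustment (the $\mu=\theta_+$ case of your scheme is fine as written, since there the remainder scales like $O(|x|^{\beta-2})/\theta(r_j)$), your proof goes through.
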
	
\begin{proof}
	If $\mu= \theta_+$, we can apply directly Theorem \ref{thm schauder} to conclude that $v= h d^{\theta_+}$ for some $h \in C^{\beta-1}$. On top of this, thanks to the boundary condition \eqref{bdry-cond higher}, we have that $h|_{\partial\Omega}\equiv0$ implying that $h/d \in  C^{\beta-2}$, proving the result in this case.\\
	
	If $\mu = \theta_-$,  we exploit the fact that $\theta_+ >\theta_-$ to apply Theorem \ref{thm higher reg linear} which combined with the boundary condition \eqref{bdry-cond higher} implies that \eqref{eq linear higher order}  holds with $Q_{1,x_0}=0$ for any $x_0$. The rest of the argument thus follow proceeding as in the proof of Theorem \ref{thm schauder}.\\
	
	Assuming now $v\in C^k(\Omega\cap B_1)$ and $f\in C^k(\Omega\cap B_1)$ for some $k>\beta-2$ together with the condition \eqref{eq decaying f}, we notice that as in the proof of  Theorem \ref{thm schauder},  we have that for every $z \in \partial \Omega \cap B_1$, there exists a polynomial $Q_{z} \in \mathbf{P}_{\lfloor \beta \rfloor-2}$ such that
	\begin{eqnarray}\label{eq approx higher mu+1} 
		\Big|v(x)-Q_{z}(x)d^{\mu+1}(x)\Big|\leq C |x-z|^{{{\mu+1} + \beta-2}}\quad  \mbox{for $x\in B_{1}(z)\cap \Omega$}.
	\end{eqnarray}		

So, as in the proof of Theorem \ref{thm schauder}, for any  $y_0 \in  \Omega\cap B_\frac{1}{2}(z)$ such that $\text{dist}(y_0)= |y_0-z|=2 r$ we define the rescaling
	\begin{equation*}\label{eq rescaling+1}
		v_r(x) = r^{{{-(\mu+1)} - \beta+2}}v(y_0+rx)-  r^{2 - \beta}Q_{z}(y_0+rx)b_r(x)^{\mu+1},
	\end{equation*}
	with  $b_r(x)= \frac{d(y_0+rx)}{r}$. Since we are assuming $\beta>2$, we can argue as in the proof of Theorem \ref{thm schauder} to show that
		\begin{equation}\label{eq rescaled operator 3}
		\frac{-1}{b_r(x)^{\mu-1}}\Delta v_r+\frac{a_r(x)}{b_r(x)^{\mu-1}} v_r=  r^{2 - \beta}\Big(f_0(y_0+rx)+g(y_0+rx)\Big),
	\end{equation}
with $a_r(x)=\frac{\kappa}{b_r(x)^2}$ and where $g \in C^{\beta-2}(\overline{\Omega}\cap B_1)$ is given by the identity 	$L_\kappa(Q_{z} d^{\mu}) = -g(z) d^{\mu-1}$ and thanks to \eqref{eq der distance}, satisfies
\begin{equation}\label{eq bry control g}
	|\partial^{\alpha} g| \leq C_{\alpha} d^{\beta-2 -|\alpha|}, \quad \mbox{ in $\Omega\cap B_1$},
\end{equation}
for $|\alpha|>\beta-2$. Thus, by applying interior Schauder estimates to \eqref{eq rescaled operator 3}, we have that for any multi-index $\lambda$ with $|\lambda| \in  \{\lfloor \beta \rfloor-1,\cdots, k\}$
	\begin{eqnarray*}
		\Vert \partial^\lambda v_r \Vert_{L^\infty(B_\frac{1}{2})}\leq C\Big( \Vert v_r\Vert_{L^\infty(B_1)}+ r^{2-\beta} 	\Vert D^{k-2}(f(y_0+r \cdot)+g(y_0+r\cdot))\Vert_{L^\infty(B_1)}\Big)
	\end{eqnarray*}
	which by \eqref{eq decaying f}, after rescaling and combining \eqref{eq decaying f} with \eqref{eq bry control g} , implies
	\begin{eqnarray*}\label{eq int schauder}
		 r^{{{-(\mu+1)} - \beta+2+k}}\Vert \partial^\lambda v \Vert_{L^\infty(B_\frac{r}{2})}&\leq& C\Big(1+  r^{k-\beta}\Vert D^{k-2}(f(y_0+r \cdot)+g(y_0+r\cdot))\Vert_{L^\infty(B_1)}\Big)\leq C.
	\end{eqnarray*}

This shows that the hypotheses of Lemma \ref{lemma quotient} are fulfilled from where \eqref{eq higherder quotient 1} follows.	
\end{proof}

\section{Higher order free boundary regularity}
\label{sec5}

This section is devoted to the proof of  Theorem \ref{thm 1}. For this, we first show step (a) in \eqref{strategy-proof}.

\begin{theorem}\label{thm higher reg solutions}
	Let $\gamma \in (0,2)$ and let $\Omega\subset\R^n$ be a $C^\beta$ domain, with $\beta>1$ and $\beta\notin \mathbb N$.  Let  $0\leq u \in C^{\frac{2}{2-\gamma}}$ be a nontrivial solution of
	\begin{equation}\label{eq-alt-phil3}
		\begin{split}
			\Delta u & = u^{\gamma-1} \quad \textrm{in}\quad \Omega\cap B_1 \\
			u& = 0  \qquad \  \textrm{on}\quad \partial\Omega\cap B_1.
		\end{split}
	\end{equation}
	Then, we have 
	\[u/d^{\frac{2}{2-\gamma}} \in C^{\beta-1}(\overline \Omega\cap B_r)\qquad\textrm{and}\qquad u_i/d^{\frac{\gamma}{2-\gamma}}\in C^{\beta-1}(\overline \Omega\cap B_r) \]
	for any $r<1$.
\end{theorem}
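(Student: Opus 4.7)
The strategy is to argue by induction on $\beta>1$, $\beta\notin\mathbb{N}$, implementing step (a) of the scheme \eqref{strategy-proof}. The base case $\beta\in(1,1+\alpha_0)$ for some small $\alpha_0>0$ is already in hand: Proposition \ref{prop regular solutions} supplies the pointwise expansion \eqref{eq basic regularity dist} at every boundary point, and Lemma \ref{lemma quotient} combined with standard interior Schauder estimates then give $u/d^{2/(2-\gamma)}\in C^{\alpha_0}$ and $u_i/d^{\gamma/(2-\gamma)}\in C^{\alpha_0}$. For the inductive step, fix $\beta>1$ and assume the conclusion for every $\beta'\in(1,\beta)$ with $\beta'\notin\mathbb{N}$.

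Following the identity \eqref{eq linearized eq}, I set $v := u - c_\gamma d^{2/(2-\gamma)}$ and use the Taylor-type identity with $p=u$, $q=c_\gamma d^{2/(2-\gamma)}$ together with formula \eqref{eq laplacian altphillips} to obtain
\begin{equation*}
L_{\kappa_\gamma} v \;=\; (f+g)\, d^{\mu-1} \qquad \textrm{in }\Omega\cap B_1,
\end{equation*}
where I take $\mu:=\theta_+$ when $\gamma\geq 2/3$ and $\mu:=\theta_-$ when $\gamma<2/3$ (so in both cases $\mu-1 = 2(\gamma-1)/(2-\gamma)$), $g\in C^{\beta-2}(\overline\Omega\cap B_1)$ is the remainder coming from Lemma \ref{lemma generalized distance}, and
\begin{equation*}
f \;=\; h^2 \int_0^1 (\gamma-1)(\gamma-2)c_\gamma^{\gamma-1}(c_\gamma + tu/d^{2/(2-\gamma)})^{\gamma-3}(1-t)\,dt, \qquad h := u/d^{2/(2-\gamma)} - c_\gamma.
\end{equation*}

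The crucial step is to prove that, even though the inductive hypothesis only guarantees $h\in C^{\beta'-1}$ for $\beta'<\beta$, one actually has $f\in C^{\beta-2}(\overline\Omega\cap B_{1/2})$, i.e.\ a full H\"older degree more. The gain comes from the fact that $h$ vanishes identically on $\partial\Omega$: at every boundary point $x_0$, the pointwise Taylor expansion of $h$ at $x_0$ has zero constant term, and a direct expansion shows that the square $h^2$ therefore admits a pointwise expansion one H\"older degree better than $h$ at $x_0$; away from $\partial\Omega$ the interior $C^\infty$ regularity of $u$ in $\{u>0\}$ furnishes the required smoothness of the bulk, and the two are patched together via standard localization. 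Once $f\in C^{\beta-2}$ is established, the bound $|v|\leq C d^{2/(2-\gamma)+\alpha'}$ from the inductive hypothesis, combined with $2/(2-\gamma) > \theta_+$ for all $\gamma \in (0,2)$, yields the boundary condition \eqref{bdry-cond higher}. Corollary \ref{corollary schauder} then provides $v/d^{\mu+1} = v/d^{2/(2-\gamma)} \in C^{\beta-2}$, whence $u/d^{2/(2-\gamma)} = c_\gamma + v/d^{2/(2-\gamma)} \in C^{\beta-1}$.

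For the derivative statement, I would re-apply the linearized machinery to $u_i$. Using the regularity of $u/d^{2/(2-\gamma)}$ just proved, write $u^{\gamma-2} = (c_\gamma + h)^{\gamma-2} d^{-2} = (c_\gamma^{\gamma-2} + \tilde h) d^{-2}$ with $\tilde h\in C^{\beta-1}$ vanishing on $\partial\Omega$, so that the linearized equation $\Delta u_i = (\gamma-1)u^{\gamma-2} u_i$ rewrites as $L_{\kappa_\gamma} u_i = F_i\, d^{\mu-2}$ with $F_i = -(\gamma-1)\tilde h \cdot u_i/d^{\gamma/(2-\gamma)}$ in a suitable H\"older class and vanishing on $\partial\Omega$. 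A direct application of Theorem \ref{thm schauder} when $\gamma\geq 2/3$ (so $u_i\sim d^{\theta_+}$, and the boundary condition $\lim u_i/d^{\theta_-} = 0$ is automatic), or of Theorem \ref{thm higher reg linear} when $\gamma<2/3$ (so $u_i\sim d^{\theta_-}$ and both coefficients of the boundary expansion must be read off), then yields $u_i/d^{\gamma/(2-\gamma)}\in C^{\beta-1}$. The principal obstacle is precisely the H\"older-degree gain for $f$: without exploiting the boundary vanishing of $h$, Corollary \ref{corollary schauder} would only return the regularity we started with, closing no induction — the pointwise squaring argument is what unlocks the bootstrap.
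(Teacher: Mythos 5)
Your decomposition $v=u-c_\gamma d^{\frac{2}{2-\gamma}}$, the Taylor identity, and the idea that the boundary vanishing of $h=u/d^{\frac{2}{2-\gamma}}-c_\gamma$ makes the quadratic term $f$ better than the inductive hypothesis are the same as in the paper, but the bookkeeping at the crux is wrong and the step that actually closes the induction is missing. You claim $f\in C^{\beta-2}$ and call this ``a full H\"older degree more''; but the inductive hypothesis (one integer step down) already gives $h\in C^{\beta-2}$, hence $f\in C^{\beta-2}$ with no work at all. The gain that is needed, and that the paper proves, is $f\in C^{\beta-1}$, and obtaining it requires not only the pointwise squaring argument at boundary points but also the interior decay estimates $|D^k(u/d^{\frac{2}{2-\gamma}})|\leq C d^{\beta-2-k}$ (Step 3 of the paper, obtained by iterating Corollary \ref{corollary schauder}), which are what allow the pointwise boundary expansions to be patched into a genuine $C^{\beta-1}(\overline\Omega)$ bound via the Leibniz rule; ``standard localization'' does not do this. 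Moreover, even granting $f\in C^{\beta-1}$, your conclusion does not follow: Corollary \ref{corollary schauder} on a $C^\beta$ domain returns at most $v/d^{\mu+1}\in C^{\beta-2}$ (its output is capped by the regularity of the domain and of $d$, not only by that of $f$), and the final inference ``$v/d^{\frac{2}{2-\gamma}}\in C^{\beta-2}$, whence $u/d^{\frac{2}{2-\gamma}}=c_\gamma+v/d^{\frac{2}{2-\gamma}}\in C^{\beta-1}$'' is a non sequitur, since adding the constant $c_\gamma$ gains nothing. This is exactly why the paper does not conclude through Corollary \ref{corollary schauder}: it runs a bespoke blow-up (Step 5), a modified version of Theorem \ref{thm higher reg linear} exploiting \eqref{eq laplacian altphillips}, least-squares polynomials, the Liouville classification, and the fact that $v$ satisfies both \eqref{bdry-cond} and \eqref{bdry-cond higher}, to obtain $|v-Q_{x_0}d^{\frac{2}{2-\gamma}}|\leq C|x-x_0|^{\frac{2}{2-\gamma}+\beta-1}$, and only then gets $u/d^{\frac{2}{2-\gamma}}\in C^{\beta-1}$ through Lemma \ref{lemma quotient}; Corollary \ref{corollary schauder} enters only to provide compactness inside that blow-up.

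Your treatment of the derivatives has an independent gap. Writing $L_\kappa u_i=F_i d^{\mu-2}$ with $F_i$ containing the factor $u_i/d^{\frac{\gamma}{2-\gamma}}$ is circular: $F_i$ is only as regular as the quantity you are trying to estimate, so Theorem \ref{thm schauder} returns $C^{\beta-2}$ and no gain. Worse, when $\gamma<\frac23$ one has $u_i\asymp d^{\theta_-}$, so the Dirichlet-type condition \eqref{bdry-cond} (and \eqref{higher bdry-cond schauder}) required by both Theorem \ref{thm schauder} and Theorem \ref{thm higher reg linear} fails for $u_i$, and neither result applies to it. The paper never applies the linear theory to $u_i$: it deduces $u_i/d^{\frac{\gamma}{2-\gamma}}\in C^{\beta-1}$ from the identity $u_i/d^{\frac{\gamma}{2-\gamma}}=d\,\partial_i\big(u/d^{\frac{2}{2-\gamma}}\big)+c\,u/d^{\frac{2}{2-\gamma}}$ combined with the decay bounds $|D^k(u/d^{\frac{2}{2-\gamma}})|\leq Cd^{\beta-1-k}$ and Lemma \ref{lemma quotient}. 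Finally, an induction ``over all $\beta'<\beta$'' on a continuum is not well-founded as stated; the paper inducts on $\lfloor\beta\rfloor$, which is legitimate precisely because each step gains a full degree.
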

\begin{proof}
	
The proof is by induction on $\lfloor \beta\rfloor$ and it is done through steps.\\

\medskip

\noindent \textit{Step 1:} We find that $v:=  u - c_\gamma d^{\frac{2}{2-\gamma}}$ satisfies
\begin{equation}\label{eq for v}
	L_\kappa v = h d^{\frac{2}{2-\gamma}-2},
\end{equation}
for some function $h$ whose regularity will be improved along the proof.\\

Computing directly, we have that
\begin{equation}\label{lkjh4}
	\Delta v = u^{\gamma-1} - \big(c_\gamma d^{\frac{2}{2-\gamma}}\big)^{\gamma-1} + gd^{\frac{2}{2-\gamma}-2},
\end{equation}
where $c_\gamma$ is given by \eqref{eq coefficient distance}, and $g$ is given by the identity
\begin{equation}\label{eq def g}
	gd^{\frac{2}{2-\gamma}-2} = \Delta\big(c_\gamma d^{\frac{2}{2-\gamma}}\big) - \big(c_\gamma d^{\frac{2}{2-\gamma}}\big)^{\gamma-1},
\end{equation}

Now, it follows from Taylor's Theorem (applied to the function $t^{\gamma-1}-1$ at $t=1$) that 
\[p^{\gamma-1}-q^{\gamma-1}=(\gamma-1)(p-q)q^{\gamma-2} + (p/q-1)^2 q^{\gamma-1} \int_0^1 (\gamma-1)(\gamma-2) \big(1+t p/q\big)^{\gamma-3}(1-t) dt \]
for all positive numbers $p,q>0$.
Hence, we deduce that 
\begin{equation}\label{lkjh5}
	u^{\gamma-1} - \big(c_\gamma d^{\frac{2}{2-\gamma}}\big)^{\gamma-1}=\kappa\frac{v}{d^2} + f d^{\frac{2}{2-\gamma}-2} 
\end{equation}
where $\kappa$ is given by \eqref{eq aux constant} and
\begin{equation}\label{eq formula f}
	f:= \big(u/d^{\frac{2}{2-\gamma}}-c_\gamma\big)^2 \int_0^1 (\gamma-1)(\gamma-2) \big(c_\gamma+t u/d^{\frac{2}{2-\gamma}}\big)^{\gamma-3}(1-t) dt.
\end{equation}

Combining \eqref{lkjh4} and \eqref{lkjh5} we deduce
\begin{equation}\label{lkjh6}
	L_\kappa v = f d^{\frac{2}{2-\gamma}-2}  + g d^{\frac{2}{2-\gamma}-2}.
\end{equation}

Notice that $f$ has at least the same regularity up to the boundary as $u/d^{\frac{2}{2-\gamma}}$.\\

\medskip

\noindent {\it Step 2:} We prove the base case. Moreover, we show that for any $\beta \in (1,2)$ we have that
\begin{equation}\label{lkjh}
	u/d^{\frac{2}{2-\gamma}} \in C^{\beta-1}\qquad\textrm{and}\qquad u_i/d^{\frac{\gamma}{2-\gamma}}\in C^{\beta-1} \qquad  \textrm{for all}
\end{equation}
for $i=1,\cdots n.$\\

In light of Proposition \ref{prop regular solutions}, it suffices to show $u_i/d^{\frac{\gamma}{2-\gamma}}\in C^{\beta-1}$. On the other hand, we claim that
\begin{equation}\label{eq initial boundary decay}
		|D^1\big(u/d^{\frac{2}{2-\gamma}}\big)\big| \leq Cd^{\beta-2}\quad \textrm{in}\quad \Omega\cap B_r.
\end{equation}
for $r\in (0,1)$.\\

Indeed, arguing as in Corollary \ref{corollary schauder},  given any $z \in \partial \Omega \cap B_1$, for any  $y_0 \in B_\frac{1}{2}\cap \Omega$ such that $\text{dist}(y_0)= |x_0-z|=2 r$ we can consider the rescaling
\begin{equation*}\label{eq rescaling+2}
	v_r(x) = r^{-\beta+1-\frac{2}{2-\gamma}}v(y_0+rx)
\end{equation*}
which is bounded in $B_1$ in virtue of the regularity of $u/d^\frac{2}{2-\gamma}$. Hence, we can rewrite \eqref{lkjh6} as
\begin{equation}\label{eq rescaled operator 4}
	\frac{-1}{b_r(x)^{\frac{2}{2-\gamma}-2}}\Delta v_r+\frac{a_r(x)}{b_r(x)^{\frac{2}{2-\gamma}-2}} v_r=  r^{2 - \beta}\Big(f(y_0+rx)+g(y_0+rx)\Big),
\end{equation}
with $b_r(x)= \frac{d(y_0+rx)}{r}$ and $a_r(x)=\frac{\kappa}{b_r(x)^2}$. Let notice now that since $|g| \leq  \in C d^{\beta-1}$ thanks to Lemma \ref{lemma generalized distance} and $|f|\leq C d^{2(\beta-1)}$ we have that both functions are bounded. Thus, we can use standard gradient estimates to deduce
\begin{eqnarray*}
	\Vert \nabla v_r \Vert_{L^\infty(B_\frac{1}{2})}\leq C\Big( \Vert v_r\Vert_{L^\infty(B_1)}+ r^{2-\beta} 	\Vert f(y_0+r \cdot)+g(y_0+r\cdot))\Vert_{L^\infty(B_1)}\Big)\leq C
\end{eqnarray*}
implying
\begin{eqnarray*}
	\Vert \nabla v \Vert_{L^\infty(B_\frac{r}{2})}\leq r^{\frac{2}{2-\gamma}+\beta-2}.
\end{eqnarray*}
From here, we can deduce \eqref{eq initial boundary decay} from Lemma \ref{lemma quotient}.\\

With \eqref{eq initial boundary decay} at hand, we notice that the elementary identity
\[u_i/d^{\frac{\gamma}{2-\gamma}} = d\partial_i\left(u/{d^{\frac{2}{2-\gamma}}}\right)+cu/{d^{\frac{2}{2-\gamma}}},\]
yields 
\[u_i/d^{\frac{\gamma}{2-\gamma}} \in C^{\beta-1}(\overline \Omega\cap B_r),\]
completing the base case.\\

We shall assume from now on $\beta>2$ and, by inductive hypothesis
\begin{equation}\label{eq inductive}
	u/d^{\frac{2}{2-\gamma}} \in C^{\beta-2}(\overline \Omega\cap B_1).
\end{equation}
In particular, from \eqref{eq formula f} we also have that $f \in C^{\beta-2}$; whereas 
\begin{equation}\label{lkjh2}
	g\in C^{\beta-1}(\overline\Omega\cap B_1),
\end{equation}
thanks to Lemma \ref{lemma generalized distance}.\\

\medskip

\noindent \textit{Step 3:} We claim that 
\begin{equation}\label{eq higherder quotient}
	|D^k\big(u/d^{\frac{2}{2-\gamma}}\big)\big| \leq C_kd^{\beta-2-k}\quad \textrm{in}\quad \Omega\cap B_r,
\end{equation}
for all $k>\beta-2$. \\

The proof of this claim will follow from an inductive application of Corollary \ref{corollary schauder}. Let us start notice that the base case when $k =\lfloor\beta \rfloor-1$ follows immediately from the boundedness of $f$ and $g$ arguing as in the previous step, i.e., we have that 
\begin{equation*}
	|D^1\big(u/d^{\frac{2}{2-\gamma}}\big)\big| \leq Cd^{\beta-3}\quad \textrm{in}\quad \Omega\cap B_r.
\end{equation*}
for $r\in (0,1)$.\\

Let us assume now that \eqref{eq higherder quotient} holds for $k \leq l$ and let is show its validity for $l+1$. Since $g$ is defined by the identity \eqref{eq def g}, Lemma \ref{lemma generalized distance} implies that 
\begin{equation}\label{eq bry control g 2}
	|\partial^{\alpha} g| \leq C_{\alpha} d^{\beta-1 -|\alpha|}, \quad \mbox{ in $\Omega\cap B_1$},
\end{equation}
for $|\alpha|>\beta-1$. On the other hand, from \eqref{eq formula f} we have that $f = H\Big( u/d^\frac{2}{2-\gamma}\Big)$ with $H \in C^\infty$ -bearing in mind also that $ u/d^\frac{2}{2-\gamma}$ is uniformly bounded from below. So, the inductive hypotheses implies that for $|\alpha| \in \{ \lfloor \beta\rfloor-1, \cdots, l\}$
\begin{equation}\label{eq bry control f}
	|\partial^{\alpha} f| \leq C_{\alpha} d^{\beta-2 -|\alpha|}, \quad \mbox{ in $\Omega\cap B_1$}.
\end{equation}

Hence, \eqref{eq bry control g 2} and \eqref{eq bry control f} together with \eqref{eq inductive} allows us to invoke Corollary \ref{corollary schauder} and show that \eqref{eq higherder quotient} holds with $k=l+1$, completing the inductive step.
\medskip

\noindent \textit{Step 4:} Our next goal is to show that $f \in C^{\beta-1}(\overline \Omega\cap B_r)$ for all $r<1$.
We first prove it at all boundary points on $\partial\Omega$.
Namely, for any $z\in \partial\Omega \cap B_r$. Let us start observing that by inductive hypothesis we have
\[u/d^{\frac{2}{2-\gamma}} - c_\gamma = P_z(x) + O(|x-z|^{\beta-2}),\]
for all $x\in \Omega\cap B_1$, for some polynomial $P_z$ satisfying $P_z(z)=0$, and where the (implicit) constants are independent of $z\in \partial\Omega \cap B_r$.
Now, since $P_z(x)=O(|x-z|)$ we obtain
\[\left(P_z+O(|x-z|^{\beta-2})\right)^2 = P_z^2 + O(|x-z|^{\beta-1}),\]
provided that $\beta>3$, and therefore
\[\big(u/d^{\frac{2}{2-\gamma}} - c_\gamma\big)^2 = P_z^2 + O(|x-z|^{\beta-1}).\]
When $\beta<3$ we use \ Proposition \ref{prop regular solutions} instead, to get
\[u/d^{\frac{2}{2-\gamma}} - c_\gamma = O(|x-z|^{\alpha})\]
and choosing $2\alpha=\beta-1$ we deduce
\[\big(u/d^{\frac{2}{2-\gamma}} - c_\gamma\big)^2 = O(|x-z|^{\beta-1}).\]
In both cases, we also have that
\[\int_0^1 (\gamma-1)(\gamma-2) \big(c_\gamma+t u/d^{\frac{2}{2-\gamma}}\big)^{\gamma-3}(1-t) dt \in C^{\beta-2},\]
and therefore 
\[f(x) = Q_z(x) + O(|x-z|^{\beta-1})\]
for all $x\in \Omega\cap B_1$, for some polynomial $Q_z$ satisfying $Q_z(z)=0$, and where the (implicit) constants are independent of $z\in \partial\Omega \cap B_r$.

This means that $f$ is pointwise $C^{\beta-1}$ on $\partial\Omega\cap B_r$.
We now want to combine this information with interior regularity estimates, to deduce that $f \in C^{\beta-1}(\overline \Omega\cap B_r)$.

Indeed, thanks to step 3 and the Leibniz Rule, choosing $k=\lfloor \beta\rfloor$, and using that $\big|u/d^{\frac{2}{2-\gamma}}-c_\gamma\big|\leq Cd$, we have 
\[\begin{split}
\big|D^k\big(u/d^{\frac{2}{2-\gamma}}-c_\gamma\big)^2\big| & \leq 
C\sum_{i=0}^{k/2} \big|D^i\big(u/d^{\frac{2}{2-\gamma}}-c_\gamma\big)\big|\cdot \big|D^{k-i}\big(u/d^{\frac{2}{2-\gamma}}-c_\gamma\big)\big|  \\
&\leq Cd^{\beta-2-k}d + C\sum_{i=1}^{k/2} \big|D^i\big(u/d^{\frac{2}{2-\gamma}}\big)\big|\cdot \big|D^{k-i}\big(u/d^{\frac{2}{2-\gamma}}\big)\big| \\
&\leq Cd^{\beta-1-k} + C\sum_{i=1}^{k/2} (1+d^{\beta-2-k+i}) \\
&\leq Cd^{\beta-1-k} + Cd^{\beta-1-k}.
\end{split}\]
Applying again the Leibniz Rule, we find 
\[\big|D^kf\big|\leq Cd^{\beta-1-k}\quad \textrm{in}\quad \Omega\cap B_r,\]
and in particular  
\[f \in C^{\beta-1}(\overline \Omega\cap B_r),\]
as wanted.\\

\medskip

\noindent {\it Step 5:}  Our next step consists in running a blow-up argument as in the proof of Theorem \ref{thm higher reg linear} and show the following:\\

	\textit{Claim:} There exists a constant $C>0$ and a radius $\rho>0$, such that for every $x_0 \in \partial \Omega \cap B_\rho$, there exists a polynomial $Q_{x_0} \in \mathbf{P}_{\lfloor \beta \rfloor -1}$ such that
\begin{eqnarray}\label{eq linear solution} 
	\Big|v(x)-Q_{1,x_0}(x)d^\frac{2}{2-\gamma}(x)\Big|\leq C |x-x_0|^{\frac{2}{2-\gamma} + \beta - 1}\quad  \mbox{for $x\in B_{\rho}(x_0)\cap \Omega$}.
\end{eqnarray}		

\noindent {\it Proof of the claim:} This proof follows the same lines of the proof of Theorem \ref{thm higher reg linear}, for this reason we just sketch the common features of the argument, stressing only on the (slight) variations of our reasoning.\\

Let us assume $x_0=0$ and argue by contradiction assuming a sequence of domains $\Omega_j$ with $\Vert \Omega_j\Vert_{C^{\beta}}\leq 1$ and, in consequence, of  distance functions $d_j$ such that for any sequence of  polynomials $P_{j} \in \mathbf{P}_{\lfloor \beta \rfloor -1}$ 
\begin{equation}\label{eq contradiction solution}
	\sup_{r >0} \sup_{j\in \N} \frac{1}{r^{\frac{2}{2-\gamma} + \beta-1}} \Big\Vert v_j-P_{j} d_j^\frac{2}{2-\gamma}\Big\Vert_{L^\infty(B_r)}=\infty,
\end{equation}
with
\begin{equation}\label{eq seq solution BVP}
	L_\kappa(v_j)= h_{j}d_j^{\frac{2}{2-\gamma}-2},\quad \text{in}\,\, B_2\cap \Omega
\end{equation}
with $\Vert h_{j}\Vert_{C^{\beta -1}( \overline{\Omega_j}\cap B_1)}\leq 1$  for $j \in \N$.\\

We select the least squares polynomial $Q_{j,r} \in \mathbf{P}_{\lfloor \beta \rfloor-1}$ characterized by the orthogonality condition
\begin{equation}\label{eq orthogonality solution}
	\int_{B_r} \Big(v_j -Q_{1,j,r} d_j^\frac{2}{2-\gamma}\Big)P_1(x)d_j^\frac{2}{2-\gamma}=0,
\end{equation}
for any $P_1 \in  \mathbf{P}_{\lfloor \beta \rfloor -1}$ for $r>0$ and $j\in \N$.\\

Let us define
$$\theta(r) =  \sup_{\rho >r }\sup_{j\in \N} \frac{1}{r^{{\frac{2}{2-\gamma} + \beta-1}}} \Big\Vert v_j -Q_{j,r} d_j^\frac{2}{2-\gamma}\Big\Vert_{L^\infty(B_r)}.$$

Arguing as in Lemma \ref{eq unboundedness of theta}, we have that $\lim_{r\to 0^+} \theta(r)=\infty$. Thus, we can pick a sequence $\{r_j\}_{j\in \N}$  with $r_j\to 0^+$ as $j\to \infty$ such that 	
\begin{equation}\label{eq blowup sequence soltuion}
	w_{j}(x):= \frac{ v_{j}(r_j x)-Q_{j,r_j}(r_jx)d_j^\frac{2}{2-\gamma}(r_j x)}{r_j^{{\frac{2}{2-\gamma}  +\beta-1}}\theta(r_j)},
\end{equation}
satisfies $ \Vert w_j\Vert_{L^\infty(B_1)} \in [\frac{1}{2},1]$ for $j\in \N$.\\		

Additionally, the same reasoning of Lemma \ref{lemma growth property} provides the growth bound
\begin{equation}\label{eq growthcond soltuion}
	\Vert w_j \Vert_{L^\infty(B_R)}\leq C R^{\frac{2}{2-\gamma}+\beta-1}.
\end{equation}	

One of the main differences with Theorem \ref{thm higher reg linear} appears when we compute the Laplacian of the auxiliary term $Q_{1,j,r} d_j^\frac{2}{2-\gamma}$. More precisely, using \eqref{eq laplacian altphillips} to deduce that
$$\Delta d_j^\frac{2}{2-\gamma}(x)= d_j^\frac{2}{2-\gamma}(x)(R_j(x)d_j(x)+c_\gamma^{\gamma-2}+d_j(x)E(x)),$$
with $|E(x)|\leq C|x|^{\beta-2}$ and $R_j \in  \mathbf{P}_{\lfloor \beta \rfloor-1}$ with $\Vert R_j \Vert_{L^\infty(B_1)}\leq C_\beta$,
we can argue as in Lemma \ref{lemma growth property} to obtain the expansion
\begin{eqnarray}\label{eq first order solution}
	\frac{1}{\theta(r) r^{\frac{2}{2-\gamma}+\beta-1}}\Delta  \Big(Q_{r}(rx)d_j^\frac{2}{2-\gamma}(r x)\Big)&=&\frac{1}{\theta(r) r^{\frac{2}{2-\gamma}+\beta-1}}\frac{\kappa r^2}{d_j^2(rx)}\Big(Q_{1,r}(rx)d_j^\frac{2}{2-\gamma}(r x)\Big)\\ 
	&&+P_{r}(x)\Big( \frac{d_j(rx)}{r}\Big)^{\frac{2}{2-\gamma}-2}+m_*(r,x)
\end{eqnarray}
where $P_{r} \in \mathbf{P}_{M}$ for some $M \in \N$ depending only on $\beta$ and $\kappa$ and where $|m_*(x,r)|\leq Cm(r)$ with $m$ only depending on $\beta$ and $\gamma$ and such that $m(r)\to 0$ as $r\to 0^+$.\\

By combining \eqref{eq seq linearized BVP}, \eqref{eq first order error linear},  and exploiting the uniform regularity of the functions $h_{j}$ we deduce	
\begin{eqnarray}\label{eq almost solution}
	L_\kappa ( w_j) &=& \Big( \tilde{Q}_{j}(x)+g_{j}(x) \Big)\Big( \frac{d_j(rx)}{r}\Big)^{\frac{2}{2-\gamma}-2}+h_{j}(x),
\end{eqnarray}
where $\tilde{Q}_{j} \in  \mathbf{P}_{M }$ for some $M\in \N$ and $\Vert g_{j} \Vert_{L^\infty(B_2\cap \overline{\Omega_j})}+\Vert h_{j} \Vert_{L^\infty(B_2\cap \overline{\Omega_j})}\to 0$ as $j\to \infty$. Additionally, in virtue of Lemma \ref{lemma uni approx poly}, the sequence of polynomials $\tilde{Q}_{j}$ are uniformly bounded in $j$. \\

We consider two cases: if $\gamma \geq \frac{2}{3}$, then $\frac{2}{2-\gamma} = \theta_+ +1$; whereas, if $\gamma < \frac{2}{3}$, we have that  $\frac{2}{2-\gamma} = \theta_- +1$. In both cases, we invoke Corollary \ref{corollary schauder} to obtain uniform boundary estimates that guarantee equicontinuity for $w_j/d_j^{\frac{2}{2-\gamma}}$. All in all, the solutions $\{w_j\}_{j\in \N}$ to the sequence of problems \eqref{eq almost solution linear lower} are equicontinuous and, therefore, satisfy that $w_j \to w_0$ locally uniformly. So, thanks to  Lemma \ref{lemma uni approx poly},  and \eqref{eq growthcond der low} we deduce that			
\begin{equation}\label{eq limiting equation solution}
	\begin{cases}
		L_\kappa w_0 = Q(x)(x_n)_+^{\frac{2}{2-\gamma}-2}, \quad \{x_n>0\}\\
		\Vert w_0\Vert_{L^\infty(B_R\cap \R^n_+)} \leq CR^{{\frac{2}{2-\gamma}}+\beta-1}
	\end{cases}
\end{equation}
with $Q \in  \mathbf{P}_{M}$ for some $M\in \N$. Additionally, thanks to Corollary \ref{corollary schauder} we have that 
\begin{equation}\label{eq boundary lower gamma}
	|w_0|\leq C x_n^{\frac{2}{2-\gamma}}.
\end{equation}

So, since $w_0$ satisfies \eqref{eq limiting equation solution} and  \eqref{eq boundary lower gamma}, we can apply Lemma  \ref{lemma general Liouville} to conclude that
$$w_0= P_1(x)(x_n)_+^\frac{2}{2-\gamma}$$ 
with $P_1 \in \mathbf{P}_{\lfloor \beta\rfloor-1}$. However, taking limits in the orthogonality condition \eqref{eq orthogonality solution} as $j\to \infty$, yields	
\begin{equation*}
	\int_{B_1} w_0  Q_1(x)x_n^\frac{2}{2-\gamma}=0,
\end{equation*}
for all  $Q_1 \in \mathbf{P}_{\lfloor \beta\rfloor-1}$. Thus, $w_0=0$ contradicting $\Vert w_0\Vert_{L^\infty(B_1)}\geq \frac{1}{2}.$\\	

\medskip

\noindent {\it Step 6:} We conclude the induction argument.\\

Once we know that \eqref{eq linear solution} holds, it follows arguing as in the proof of Theorem \ref{thm schauder} that
\[u/d^{\frac{2}{2-\gamma}} \in C^{\beta-1}(\overline \Omega\cap B_r).\]
Moreover, exactly as in step 3, we now have
\[\big|D^k\big(u/d^{\frac{2}{2-\gamma}}\big)\big| \leq Cd^{\beta-1-k}\quad \textrm{in}\quad \Omega\cap B_r,\]
for all $k>\beta-1$, which combined with
\[u_i/d^{\frac{\gamma}{2-\gamma}} = d\partial_i\left(u/{d^{\frac{2}{2-\gamma}}}\right)+cu/{d^{\frac{2}{2-\gamma}}},\]
yields 
\[\big|D^k\big(u_i/d^{\frac{\gamma}{2-\gamma}}\big)\big| \leq Cd^{\beta-1-k}\quad \textrm{in}\quad \Omega\cap B_r.\]
This means that 
\[u_i/d^{\frac{\gamma}{2-\gamma}} \in C^{\beta-1}(\overline \Omega\cap B_r),\]
and we are done.

\end{proof}

We can finally give the:

\begin{proof}[Proof of  Theorem \ref{thm 1}]
Assume that $x_0=0$ and that the inner unit normal vector at $0$ is $\nu=e_n$, and let $\Omega=\{u>0\}$.
Since $\Delta u_i = (\gamma-1)u^{\gamma-2} u_i$ in $\Omega$ for all $i=1,...,n$, then it follows that 
\[w:=\frac{u_i}{u_n}\]
solves
\[{\rm div}(u_n^2\nabla w) = 0 \quad \textrm{in}\quad \Omega\cap B_1.\]
Thanks to Proposition, we have that 
\[u_n^2\asymp d^s \qquad  \textrm{and} \qquad u_n^2/d^s \in C^\alpha(B_{r_\circ}\cap \overline\Omega) \qquad \textrm{for any}\quad \alpha\in(0,1),\]
where $r_\circ>0$ and
\[s:={\frac{2\gamma}{2-\gamma}}>0.\]
Hence, we can write the PDE for $w$ as 
\begin{equation}\label{werty} 
{\rm div}\Big(d^s a(x)\nabla w \Big) = 0 \quad \textrm{in}\quad \Omega\cap B_{r_\circ},
\end{equation}
with $a(x)\in C^\alpha(B_{1/2}\cap \overline\Omega)$ and $\lambda\leq a(x)\leq \Lambda$ for some constants $\lambda,\Lambda>0$.

Let us see now what the boundary condition for $w$ on $\partial\Omega$ is.
Again by Propositions \ref{prop regular solutions}, we have that 
\[w=\frac{u_i}{u_n} \in C^{\alpha}(B_{r_\circ}\cap \overline\Omega)\quad \textrm{for any}\quad \alpha\in(0,1),\]
and combining this with interior estimates, a standard argument yields $|\nabla w|\leq Cd^{\alpha-1}$ in $\Omega\cap B_{r_\circ}$.
In particular, taking $\alpha>1-s$, we deduce that
\[\lim_{t\downarrow0} \frac{w(z+t\nu)-w(z)}{t^{1-s}} = 0 \qquad \textrm{for all}\quad z\in \partial\Omega\cap B_{r_\circ}\]
or, equivalently,
\begin{equation}\label{werty2} 
\lim_{t\downarrow0} t^s \,\nabla w(z+t\nu) \cdot \nu = 0 \qquad \textrm{for all}\quad z\in \partial\Omega\cap B_{r_\circ}.
\end{equation}
This means that $w$ is a weak solution of \eqref{werty} with Neumann-type boundary conditions \eqref{werty2}, and therefore it follows from \cite[Theorem~1.1]{TTV22} that 
\[w\in C^{1+\alpha}(B_r\cap \overline\Omega) \quad \textrm{for any}\quad r\in(0,r_\circ).\]
But then, exactly as in \eqref{strategy-obstacle}, we deduce that $\nu\in C^{1,\alpha}$ and $\partial\Omega \in C^{2,\alpha}$.

Now, since $\Omega$ is $C^{2,\alpha}$, we can apply Theorem \ref{thm higher reg solutions} to find that
\[u_i/d^{\frac{\gamma}{2-\gamma}} \in C^{1,\alpha}(\overline\Omega \cap B_r ),\]
and in particular
\[a(x)= u_n^2/d^s \in C^{1,\alpha}(\overline\Omega \cap B_r).\]
Hence, we can apply again  \cite[Theorem~1.1]{TTV22} to deduce that 
\[w\in C^{2+\alpha}(\overline\Omega \cap B_r ) \quad \textrm{for any}\quad r\in(0,r_\circ).\]
Iterating this procedure, we find that 
\[\partial\Omega \in C^\infty \qquad \textrm{and}\qquad  u_i/d^{\frac{\gamma}{2-\gamma}} \in C^\infty.\]
This yields $u/d^{\frac{2}{2-\gamma}} \in C^\infty$ and thus $u^\frac{2-\gamma}{2}= \Big(u/d^{\frac{2}{2-\gamma}}\Big)^\frac{2-\gamma}{2} d  \in C^\infty$ , and the theorem is proved.
\end{proof}

\section{Polynomial lemmas}
\label{sec6}

In this section we gather technical lemmas that provide control over the polynomials used in the least squares approximation in our various blow-up arguments. All of these results can be seen as a generalization to those in \cite{AR} regarding the control of their least square polynomials, in the sense that we deal with more than one element in the kernel of the limiting linearized operator.\\

\begin{lemma}\label{lemma second polynomial scalinginv bounds}
	Let $\kappa \geq  - \frac{1}{4}$ and $\theta_\pm$ as in \eqref{eq powers}.	Let $k_1, k_2 \in \N $, $\beta >1$, and let $\Omega$ be such that $\Vert\partial \Omega \cap B_2\Vert_{C^{\beta}}\leq 1$. There exists $\rho>0$ such that for any $r\in (0,\rho)$ and for any $Q_i \in \mathbf{P}_{k_i}$ for $i=1,2$, if	
\begin{equation}\label{eq linfty bound}
		\Vert Q_1 d^{\theta_+}+Q_2 d^{\theta_- + 1}\Vert_{L^\infty(\partial\Omega \cap B_r )} \leq \theta,
\end{equation}
 where $\theta>0$ and with $Q_2=0$ if either $\theta_- \leq0$ or if $\kappa = - \frac{1}{4}$, then	
	\begin{equation}\label{eq rescaled ineq}
		\Vert Q_1\Vert_{L^\infty(\partial\Omega \cap B_{r})}r^{\theta_+}+ 	\Vert Q_2\Vert_{L^\infty(\partial\Omega \cap B_r)}r^{\theta_-+ 1}\leq C\theta,
	\end{equation}
and
\begin{equation}\label{eq coefficients bounds}
	\sum_{|\mu|\leq k_1}r^{|\mu|+\theta_+} \big|q^{(\mu)}_1\big|+ \sum_{|\mu|\leq k_2}r^{|\mu|+\theta_- +1} \big|q^{(\mu)}_2\big| \leq C\theta,
\end{equation}
	with $C=C(\rho,n,k_1,k_2, \kappa)$.	
\end{lemma}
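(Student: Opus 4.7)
The plan is a rescaling and compactness argument that reduces the statement to an algebraic injectivity fact on the half-space. Introduce the rescaled polynomials $\tilde Q_1(x):=r^{\theta_+}Q_1(rx)$ and $\tilde Q_2(x):=r^{\theta_-+1}Q_2(rx)$, together with $d_r(x):=d(rx)/r$ and $\Omega_r:=r^{-1}\Omega$. Observe that the coefficients of $\tilde Q_i$ are precisely the quantities $r^{\theta_i^\ast+|\mu|}q_i^{(\mu)}$ appearing in \eqref{eq coefficients bounds} (with $\theta_1^\ast=\theta_+$ and $\theta_2^\ast=\theta_-+1$), and the change of variables $y=rx$ yields
\[
Q_1(y)\,d(y)^{\theta_+}+Q_2(y)\,d(y)^{\theta_-+1}=\tilde Q_1(y/r)\,d_r(y/r)^{\theta_+}+\tilde Q_2(y/r)\,d_r(y/r)^{\theta_-+1}.
\]
Thus the hypothesis \eqref{eq linfty bound} becomes a uniform bound by $\theta$ for this expression (on the rescaled set), while \eqref{eq coefficients bounds} becomes a uniform-in-$r$-and-$\Omega$ bound on the coefficient sum of $(\tilde Q_1,\tilde Q_2)$. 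Note that \eqref{eq rescaled ineq} follows immediately from \eqref{eq coefficients bounds} by the pointwise estimate $|Q_i(x)|\leq \sum_{\mu}|q_i^{(\mu)}|r^{|\mu|}$ valid on $B_r$, so it suffices to prove \eqref{eq coefficients bounds}.

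Next, I would argue by contradiction: if \eqref{eq coefficients bounds} fails, there exist sequences $r_j\to 0^+$, $\theta_j>0$, domains $\Omega_j$ with $\|\partial\Omega_j\cap B_2\|_{C^{\beta}}\leq 1$, and polynomials $Q_{1,j},Q_{2,j}$ violating the bound. After rescaling as above and dividing through, one may normalize so that $\sum_{\mu}|\tilde q_{1,j}^{(\mu)}|+\sum_{\mu}|\tilde q_{2,j}^{(\mu)}|=1$ while the normalized hypothesis-side tends to $0$. Since $\mathbf P_{k_1}\oplus \mathbf P_{k_2}$ is finite-dimensional, a subsequence of coefficient vectors converges, yielding $\tilde Q_{i,j}\to \tilde Q_i^\ast$ with $(\tilde Q_1^\ast,\tilde Q_2^\ast)\neq (0,0)$. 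By Lemma \ref{lemma generalized distance}(2) and the uniform $C^{\beta}$ bound on $\partial\Omega_j$, the rescaled distance $d_{r_j}(x)$ converges locally uniformly to $x_n$ on $\overline{\R^n_+}$, while $\Omega_{r_j}\to \R^n_+$; since $\theta_+$ and $\theta_-+1$ are both positive, the powers $d_{r_j}^{\theta_+}$ and $d_{r_j}^{\theta_-+1}$ converge locally uniformly to $x_n^{\theta_+}$ and $x_n^{\theta_-+1}$ respectively. Passing to the limit in the (rescaled and normalized) hypothesis then yields
\[
\tilde Q_1^\ast(x)\,x_n^{\theta_+}+\tilde Q_2^\ast(x)\,x_n^{\theta_-+1}\equiv 0 \quad\text{on}\quad \R^n_+\cap B_1.
\]

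The contradiction now comes from an algebraic injectivity fact. Writing $\tilde Q_i^\ast(x',x_n)=\sum_j p_{i,j}(x')\,x_n^j$, the displayed identity becomes $\sum_j p_{1,j}(x')\,x_n^{j+\theta_+}+\sum_j p_{2,j}(x')\,x_n^{j+\theta_-+1}\equiv 0$. The only regime in which both $\tilde Q_i^\ast$ can be nontrivial is $-\tfrac14<\kappa<0$: when $\theta_-\leq 0$ or $\kappa=-\tfrac14$, the hypothesis forces $Q_2\equiv 0$, and only $\tilde Q_1^\ast x_n^{\theta_+}\equiv 0$ remains, which immediately gives $\tilde Q_1^\ast=0$ since $x_n^{\theta_+}>0$ in $\R^n_+$. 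For $-\tfrac14<\kappa<0$, the difference $\theta_+-(\theta_-+1)=\sqrt{1+4\kappa}-1\in(-1,0)$ is non-integer, so the exponents $\{j+\theta_+\}_{j=0}^{k_1}\cup \{j+\theta_-+1\}_{j=0}^{k_2}$ are pairwise distinct positive reals; by linear independence of $\{x_n^{\nu}\}$ for distinct $\nu\geq 0$ over smooth functions of $x'$, each $p_{i,j}(x')$ must vanish identically, forcing $\tilde Q_1^\ast=\tilde Q_2^\ast=0$ and contradicting the normalization. The main technical point, and the role of the smallness parameter $\rho$, is to guarantee that the error terms $Q(r_j x)+g(r_j x)$ in the expansion of $d$ from Lemma \ref{lemma generalized distance} become uniformly negligible after division by $r_j$ on $\overline{B_1}$; this can be arranged once $\rho$ is taken depending only on the $C^{\beta}$ norm of $\partial\Omega$, consistent with the stated dependences of the constants.
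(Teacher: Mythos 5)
Your proposal is correct and follows essentially the same strategy as the paper: rescale the polynomials so that the coefficients become the quantities in \eqref{eq coefficients bounds}, use finite-dimensionality of $\mathbf{P}_{k_1}\times\mathbf{P}_{k_2}$ together with the linear independence of $x_n^{\theta_+}$ and $x_n^{\theta_-+1}$ (the non-integer gap when $-\frac14<\kappa<0$, and $Q_2\equiv 0$ otherwise), and use the convergence $d(rx)/r\to x_n$ from Lemma \ref{lemma generalized distance} to transfer the flat estimate to the curved domain for $r<\rho$. The only cosmetic difference is that the paper first proves the half-space coefficient estimate by equivalence of norms and then perturbs "by compactness," whereas you merge both steps into a single blow-up/contradiction argument; the content is the same.
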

\begin{proof}	
Let us notice first that by the equivalence of norms in finite dimensional spaces, there exists a constant $C=C(k_1,k_2,\kappa)$ such that	
	\begin{eqnarray}\nonumber
	\sum_{|\mu|\leq k_1} \big|p^{(\mu)}_1\big|+ \sum_{|\mu|\leq k_2}\big|p^{(\mu)}_2\big|&\leq& C(\Vert P_1(x) \Vert_{L^\infty(\R^n_+\cap B_1)}+\Vert P_2(x)\Vert_{L^\infty(\R^n_+\cap B_1)})\\\nonumber
	&\leq&  C\Big(	\Vert P_1(x) x_n^{\theta_+} \Vert_{L^\infty(\R^n_+\cap B_1)}+\Vert P_2(x)x_n^{\theta_-+1} \Vert_{L^\infty(\R^n_+\cap B_1)}\Big)\\\label{eq polyconv dist}
	&\leq& C 	\Vert P_1(x) x_n^{\theta_+}+P_2(x)x_n^{\theta_-1+1}\Vert_{L^\infty(\R^n_+\cap B_1)},
	\end{eqnarray}	
where the last inequality uses fundamentally the linear independence of $x_n^{\theta_+}$ and $x_n^{\theta_-+1}$, i.e., $\theta_+-\theta_- \notin \N$.\\
	
Let $F_r := \frac{1}{r} \Omega$. By compactness, we can deduce from \eqref{eq polyconv dist} and from the $C^{\beta}$ bound in $\partial \Omega \cap B_2$, we can find $\rho>0$ such that for $r\in (0,\rho)$ and for any $P_i \in  \mathbf{P}_{k_i}$,

	\begin{eqnarray}\nonumber
	\sum_{|\mu|\leq k_1} \big|p^{(\mu)}_1\big|+ \sum_{|\mu|\leq k_2} \big|p^{(\mu)}_2\big|
	&\leq& \Big\Vert P_1(x)  \Big\Vert_{L^\infty(F_r\cap B_1)}+\Vert P_2(x) \Vert_{L^\infty(F_r \cap B_1)}\\\label{eq polyconv dist approx}
	&\leq& C 	\Big\Vert P_1(x) \Big(\frac{d(rx)}{r}\Big)^{\theta_+}+P_2(x)\Big(\frac{d(rx)}{r}\Big)^{\theta_-+1} \Big\Vert_{L^\infty(F_r\cap B_1)}.
\end{eqnarray}

Let $Q_i(x) \in \mathbf{P}_{k_i}$ for $i=1,2$ and let $r\in (0,r_0)$. Let us consider the polynomials
$$P_1(x) =\sum_{ |\mu|\leq k_1} r^{|\mu|+\theta_+}  q_1^{(\mu)} x^\alpha, \qquad P_2(x) =\sum_{ |\mu|\leq k_2} r^{|\mu|+\theta_-+1}  q_2^{(\mu)} x^\alpha.$$
 With this choosing of $P_1$ and $P_2$ in \eqref{eq polyconv dist approx}, we deduce, after a change of variables, that
\begin{eqnarray*}
\sum_{|\mu|\leq k_1}r^{|\mu|+\theta_+} \big|q^{(\mu)}_1\big|+ \sum_{|\mu|\leq k_2}r^{|\mu|+\theta_-+1} \big|q^{(\mu)}_2\big|
&\leq& r^{\theta_+}	\Big\Vert Q_1 \Big\Vert_{L^\infty( \Omega \cap B_r)}+r^{\theta_-+1}\Vert Q_2(x) \Vert_{L^\infty( \Omega \cap B_r)}\\
&\leq& C	\Big\Vert Q_1 d^{\theta_+}+Q_2d^{{\theta_-+1}} \Big\Vert_{L^\infty( \Omega \cap B_r)},
\end{eqnarray*}
which yields \eqref{eq rescaled ineq} and \eqref{eq coefficients bounds}.
\end{proof}

\begin{lemma}\label{eq unboundedness of theta}
	Let $\kappa \geq  - \frac{1}{4}$ and $\theta_\pm$ as in \eqref{eq powers}.	Let $\beta >1$ with $\beta \notin \N$, and let $\Omega$ be such that $\Vert\partial \Omega \cap B_2\Vert_{C^{\beta}}\leq 1$. Let  $v \in L^\infty(B_2)$ and let us assume that for some $\rho>0$ and each $r\in (0,\rho)$ there exists $Q_{1,r} \in \mathbf{P}_{\lfloor \beta \rfloor -1 }$ and $Q_{2,r} \in \mathbf{P}_{\lfloor \beta +\theta_+-\theta_- \rfloor}-2$ such that 
		\begin{equation}\label{eq decayinginit}
				\Vert v-Q_{1,r}d^{\theta_+}-Q_{2,r}d^{\theta_-}\Vert_{L^\infty(B_r)} \leq  c_0  r^{\theta_++\beta-1},
		\end{equation}
	where $d$ is extended by $0$ outside of $\Omega \cap B_2$, and with $Q_{2,r}=0$ if $\theta_- \leq0$ or if $\kappa = -\frac{1}{4}$.\\
	
	Then there exists $\tilde{Q_1}  \in \mathbf{P}_{\lfloor \beta \rfloor -1 }$  and $\tilde{Q_2} \in \mathbf{P}_{\lfloor \beta +\theta_+-\theta_- \rfloor}-2$  such that 		
			\begin{equation}\label{eq decayinghompoly}
			\Vert v-\tilde{Q_1}d^{\theta_+}-\tilde{Q_2}d^{{\theta_-+1}}\Vert_{L^\infty(B_r)} \leq Cc_0 r^{\beta+\theta_+-1},
		\end{equation}
	for $r \in (0,\rho)$.	
\end{lemma}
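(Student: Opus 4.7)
The plan is to construct $\tilde Q_1$ and $\tilde Q_2$ as limits, as $r\to 0^+$, of the coefficients of $Q_{1,r}$ and $Q_{2,r}$, and then to derive \eqref{eq decayinghompoly} by telescoping over dyadic scales. The engine of the whole argument will be Lemma~\ref{lemma second polynomial scalinginv bounds}: any $L^\infty$ control on a pair $(Q_1, Q_2)$ through the combination $Q_1 d^{\theta_+} + Q_2 d^{\theta_-+1}$ on $B_r$ translates into scale-invariant control on each individual coefficient.

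First I would compare consecutive dyadic scales. Fix $r\in(0,\rho/2)$. Applying \eqref{eq decayinginit} at scales $r$ and $r/2$ and using the triangle inequality,
\[
\big\|(Q_{1,r}-Q_{1,r/2})d^{\theta_+}+(Q_{2,r}-Q_{2,r/2})d^{\theta_-+1}\big\|_{L^\infty(B_{r/2})} \leq Cc_0\, r^{\theta_++\beta-1}.
\]
Lemma~\ref{lemma second polynomial scalinginv bounds} then gives, after peeling off $r^{\theta_+}$ (resp.\ $r^{\theta_-+1}$),
\[
\big|q^{(\mu)}_{1,r}-q^{(\mu)}_{1,r/2}\big| \leq Cc_0\, r^{\beta-1-|\mu|}, \qquad \big|q^{(\mu)}_{2,r}-q^{(\mu)}_{2,r/2}\big| \leq Cc_0\, r^{\beta+\theta_+-\theta_--2-|\mu|}.
\]
Under the degree restrictions $|\mu|\leq \lfloor\beta\rfloor-1$ and $|\mu|\leq \lfloor\beta+\theta_+-\theta_-\rfloor-2$, both exponents are strictly positive, so iterating along the geometric sequence $r,r/2,r/4,\dots$ makes each coefficient sequence Cauchy. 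I would let $\tilde q_i^{(\mu)}$ denote the limits and define $\tilde Q_i$ as the polynomials with these coefficients.

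Telescoping yields the same bounds for $|q^{(\mu)}_{i,r}-\tilde q^{(\mu)}_{i}|$. Multiplying them by $r^{|\mu|+\theta_+}$ and $r^{|\mu|+\theta_-+1}$ respectively, summing over the finitely many multi-indices, and using $d\leq Cr$ on $B_r$, I would obtain
\[
\big\|(Q_{1,r}-\tilde Q_1)d^{\theta_+}+(Q_{2,r}-\tilde Q_2)d^{\theta_-+1}\big\|_{L^\infty(B_r)} \leq Cc_0\, r^{\beta+\theta_+-1},
\]
and a final triangle inequality with \eqref{eq decayinginit} closes the argument.

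The main delicate point will be the strict positivity of the telescoping exponents, which is precisely what keeps the iteration summable and the limit polynomials well-defined. For $\tilde Q_1$ this is immediate from $\beta\notin\mathbb N$; for $\tilde Q_2$, whenever the hypothesis does not force $Q_{2,r}\equiv 0$ (that is, whenever $\theta_->0$ and $\kappa>-\tfrac14$), the degree bound exploits the non-integrality of $\beta+\theta_+-\theta_-$, which is the same genericity condition underlying Lemma~\ref{lemma second polynomial scalinginv bounds} through the linear independence of $d^{\theta_+}$ and $d^{\theta_-+1}$. Once this is verified, the rest is routine dyadic iteration.
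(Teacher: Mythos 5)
Your proposal is correct and follows essentially the same route as the paper's proof: compare consecutive dyadic scales via the triangle inequality, convert the $L^\infty$ bound on the combination into coefficient bounds through Lemma~\ref{lemma second polynomial scalinginv bounds}, note the exponents are positive (using $\beta\notin\N$ and the non-integrality of $\beta+\theta_+-\theta_-$), telescope to obtain limiting polynomials $\tilde Q_1,\tilde Q_2$, and conclude with a final triangle inequality against \eqref{eq decayinginit}. No essential step is missing.
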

\begin{proof}
Let us consider \eqref{eq decayinghompoly} for the values $r$ and $2r$. Subtracting them and then applying the triangular inequality yields
\begin{equation*}
	\Big\Vert (Q_{1,2r}-Q_{1,r}) d^{\theta_+}+  (Q_{2,2r}-Q_{2,r})  d^{\theta_-+1} \Big \Vert_{{L^{\infty}(B_r)}}\leq  Cc_0r^{\theta_+ +\beta-1}.
\end{equation*}

Therefore, Lemma \ref{lemma second polynomial scalinginv bounds} implies
\begin{equation}\label{eq dyadic poly1}
	|q^{(\mu_1)}_{i,r}-q^{(\mu_1)}_{i,2r}|\leq Cc_0 r^{\beta-1-|\mu_1|},
\end{equation}
with $|\mu_1|\leq \lfloor \beta \rfloor -1$; whereas
\begin{equation}\label{eq dyadic poly2}
	|q^{(\mu_2)}_{2,r}-q^{(\mu_2)}_{i,2r}|\leq Cc_0 r^{\beta-2+\theta_+-\theta_--|\mu_2|},
\end{equation}
 for  $|\mu_2|\leq \lfloor \beta +\theta_+-\theta_- \rfloor -2$. Let us remark that the latter inequality implies the uniform boundedness in $r$ for the differences $|q^{(\mu_i)}_{i,r}-q^{(\mu_i)}_{i,2r}|$, for $i=1,2$.\\

On the other hand, applying the triangular inequality in \eqref{eq decayinginit} for $r=1$ and using again Lemma \ref{lemma second polynomial scalinginv bounds}, we obtain the bound
\begin{equation}\label{eq initial bounds}
	\Vert Q_{1,1}\Vert_{L^\infty(B_1)}+	\Vert Q_{2,1}\Vert_{L^\infty(B_1)}\leq Cc_0+\Vert v\Vert_{L^\infty(B_\rho)}.
\end{equation}
The latter bound combined with \eqref{eq dyadic poly1} and \eqref{eq dyadic poly2} implies the convergence of the sequences $\{q^{(\mu)}_{i,2^{-j}}\}_{j\in \N}$ for $i=1,2$ and for each $\mu$. Denoting the corresponding limits by $q^{(\mu)}_i$, we deduce, namely for $i=2$, adding up dyadically and applying the bound that for any $r \in (0,\rho)$
\begin{eqnarray*}
	|q^{(\mu)}_{2}-q^{(\mu)}_{2,r}|&\leq& \sum_{j=0}^{\infty} |q^{(\mu)}_{2,2^{-j}r}-q^{(\mu)}_{2,2^{-j-1}r}|\leq C c_0\sum_{j=0}^{\infty} (r2^{-j})^{\beta-2+\theta_+-\theta_--|\mu|}\\
	&\leq& C c_0 r^{\beta-2+\theta_+-\theta_--|\mu|} \sum_{j=0}^{\infty} (2^{-j})^{\beta+\theta_+-\theta_--2-|\mu|}\leq C c_0 r^{\beta+\theta_+-\theta_--2-|\mu|}.
\end{eqnarray*}
The same estimate holds, mutatis mutandis, for $i=1$.\\

Let us define $\tilde{Q_1} = \sum_{|\mu|\leq \lfloor \beta  \rfloor -1} q_1^{\mu} x^\mu$ and  $\tilde{Q_2} = \sum_{|\mu|\leq \lfloor \beta +\theta_+-\theta_- \rfloor -2} q_2^{\mu} x^\mu$. Applying the triangular inequality together with the previous bound (and its analogue for $\tilde{Q_1}$) yields
\begin{eqnarray*}
	\Vert v-\tilde{Q_1}d^{\theta_+}-\tilde{Q_2}d^{{\theta_-+1}}\Vert_{L^\infty(B_r)}&\leq& \Vert v-\tilde{Q_{1,r}}d^{\gamma_1}-\tilde{Q_{2,r}}d^{{\gamma_2}}\Vert_{L^\infty(B_r)} \\
	&& 	+ \Vert (Q_{1,r}-\tilde{Q_1})d^{\theta_+}\Vert_{L^\infty(B_r)}+	\Vert (Q_{2,r}-\tilde{Q_2}) d^{\theta_-+1} \Vert_{L^\infty(B_r)}\\
	&\leq& C c_0 r^{\beta+\gamma_1},
\end{eqnarray*}
which proves the result.
\end{proof}

\begin{lemma}\label{lemma growth property}
		Let $\kappa \geq  - \frac{1}{4}$ and $\theta_\pm$ as in \eqref{eq powers}.	Let $\beta >1$ with $\beta \notin \N$, and let $\Omega$ be such that $\Vert\partial \Omega \cap B_2\Vert_{C^{\beta}}\leq 1$. Let  $v \in L^\infty(B_2)$ and let us assume that for some $\rho>0$ and each $r\in (0,\rho)$ there exists $Q_{1,r} \in \mathbf{P}_{\lfloor \beta \rfloor -1 }$ and $Q_{2,r} \in \mathbf{P}_{\lfloor \beta +\theta_+-\theta_- \rfloor-2}$ such that 
	\begin{equation}\label{eq asumption growth}
		\Vert v-Q_{1,r}d^{\theta_+}-Q_{2,r}d^{\theta_-+1}\Vert_{L^\infty(B_s)}\leq \theta(s) r^{\beta-1+\theta_+},
	\end{equation}
	where $\theta: (0,\rho)\to (0,\infty)$ is a non-increasing function, with 
	\begin{equation}
		\lim_{r\to 0^+} \theta(r) = +\infty,
	\end{equation}
	and with $Q_{2,r}=0$ if $\theta_- \leq0$ or if $\kappa = -\frac{1}{4}$.

	Then
	\begin{enumerate}
		\item 
		\begin{equation}\label{eq boundedness ply}
			 \lim_{r\to 0^+} \frac{q_{i,r}^{(\mu)}}{\theta(r)}=0,
		\end{equation}
		for $i=1,2$.
		\item For $ r\in (0, \rho)$ the functions		
		\begin{equation}\label{eq blowup sequence}
			w_{r}(x):= \frac{ v(r x)-Q_{1,r}(r x)d^{\theta_+}(r x)-Q_{2,r}(r  x)d^{\theta_-+1}(r x)}{r^{{\theta_+ +\beta-1}}\theta(r)},
		\end{equation}
		satisfy the uniform bound
		\begin{equation}\label{eq growthconclusion}
			\Vert w_r \Vert_{L^\infty(B_R)}\leq C R^{\beta-1+\theta_+},
		\end{equation}
		for $R>1$.
	
	\item Lastly, if $\beta>2$,
		\begin{eqnarray}\nonumber
		\frac{1}{\theta(r) r^{{\theta_+}+\beta-1}}\Delta  \Big(Q_{1,r}(rx)d^{\theta_+}(r x)+Q_{2,r}(rx)d^{\theta_-+1}(r x)\Big)
		&=&\frac{1}{\theta(r) r^{{\theta_+}+\beta-1}}\frac{\kappa r^2}{d^2(rx)}Q_{1,r}(rx)d^{\theta_+}(r x)\\ \notag
		&&+ \frac{1}{\theta(r) r^{{\theta_+}+\beta-1}}\frac{\kappa r^2}{d^2(rx)}Q_{2,r}(rx)d^{\theta_-+1}(r x)\\ \notag
		&&+P_{1,r}(x)\Big( \frac{d(rx)}{r}\Big)^{\theta_+-1}\\\nonumber
		&& +P_{2,r}(rx) \Big( \frac{d(rx)}{r}\Big)^{\theta_--1}\\ \label{eq laplacian linear terms} 
		&& +m_*(r,x)
	\end{eqnarray}
where $P_{i,r} \in \mathbf{P}_{M}$ for $i=1,2$ and $|m_*(x,r)|\leq Cm(r)$ such that $m(r)\to 0$ as $r\to 0^+$. Here $m$ and $M$ only depends on $\beta$, $\kappa$, and $n$. 
	\end{enumerate}
\end{lemma}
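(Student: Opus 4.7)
The plan is to prove the three parts in order. Parts (1) and (2) rest on a single engine—a dyadic comparison between the polynomial approximations $Q_{1,r},Q_{2,r}$ at different scales, controlled by Lemma \ref{lemma second polynomial scalinginv bounds}—while part (3) is a direct product-rule expansion using the pointwise expansions for powers of the regularized distance from Lemma \ref{lemma generalized distance}.

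For part (1), I would compare consecutive dyadic scales. Applying \eqref{eq asumption growth} at $r$ and $2r$, the triangle inequality on $B_r$ together with the monotonicity of $\theta$ yields
\begin{equation*}
\big\|(Q_{1,2r}-Q_{1,r})d^{\theta_+}+(Q_{2,2r}-Q_{2,r})d^{\theta_-+1}\big\|_{L^\infty(B_r)}\leq C\,\theta(r)\,r^{\beta-1+\theta_+}.
\end{equation*}
Lemma \ref{lemma second polynomial scalinginv bounds} then converts this into coefficient-level bounds $|q_{1,2r}^{(\mu)}-q_{1,r}^{(\mu)}|\leq C\theta(r)\,r^{\beta-1-|\mu|}$ and $|q_{2,2r}^{(\mu)}-q_{2,r}^{(\mu)}|\leq C\theta(r)\,r^{\beta-2+\theta_+-\theta_--|\mu|}$. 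Since $\beta\notin\N$, both exponents are strictly positive in the admissible ranges of $|\mu|$, so telescoping from a fixed $r_0\in(0,\rho)$ down to $r=2^{-k}r_0$ gives a convergent geometric series and the bound $|q_{i,r}^{(\mu)}|\leq |q_{i,r_0}^{(\mu)}|+C\theta(r)\,r_0^{\beta-1-|\mu|}$ (with its analogue for $i=2$). Dividing by $\theta(r)$, using $\theta(r)\to\infty$, and then letting $r_0\to 0$ yields \eqref{eq boundedness ply}.

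For part (2), I would run the same mechanism \emph{upward}, telescoping from $r$ to $s:=rR$ over $\lceil\log_2 R\rceil$ steps. This time the geometric sum is dominated by its largest term, producing $|q_{1,s}^{(\mu)}-q_{1,r}^{(\mu)}|\leq C\theta(r)\,s^{\beta-1-|\mu|}$ and $|q_{2,s}^{(\mu)}-q_{2,r}^{(\mu)}|\leq C\theta(r)\,s^{\beta-2+\theta_+-\theta_--|\mu|}$. Using $d(y)\leq C|y|$, these turn into
\begin{equation*}
\big\|(Q_{1,s}-Q_{1,r})d^{\theta_+}+(Q_{2,s}-Q_{2,r})d^{\theta_-+1}\big\|_{L^\infty(B_s)}\leq C\theta(r)\,s^{\beta-1+\theta_+}.
\end{equation*}
Combining this with the hypothesis at scale $s$ and a further triangle inequality, I obtain $\|v-Q_{1,r}d^{\theta_+}-Q_{2,r}d^{\theta_-+1}\|_{L^\infty(B_s)}\leq C\theta(r)\,s^{\beta-1+\theta_+}$. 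Dividing by $\theta(r)\,r^{\theta_++\beta-1}$ and recalling $s=rR$ gives \eqref{eq growthconclusion}.

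For part (3), the identity is a direct product-rule expansion. Writing $\Delta(Qd^\lambda)=(\Delta Q)d^\lambda+2\nabla Q\cdot\nabla(d^\lambda)+Q\,\Delta(d^\lambda)$ and invoking \eqref{eq lap distpower}, the leading contribution of $Q_{1,r}\Delta(d^{\theta_+})$ is $\theta_+(\theta_+-1)Q_{1,r}d^{\theta_+-2}=\kappa Q_{1,r}d^{\theta_+-2}$, which after rescaling matches precisely the first term on the right of \eqref{eq laplacian linear terms}. For the $\theta_-+1$ factor the characteristic equation gives $\theta_-(\theta_-+1)=2\theta_--\kappa$, so there is a surplus $(\theta_-(\theta_-+1)-\kappa)Q_{2,r}d^{\theta_--1}$; this surplus, together with $(\Delta Q_{i,r})d^{(\cdot)}$, $\nabla Q_{i,r}\cdot\nabla(d^{(\cdot)})$, and the error terms from Lemma \ref{lemma generalized distance}, is organized into $P_{1,r}(x)(d(rx)/r)^{\theta_+-1}$, $P_{2,r}(x)(d(rx)/r)^{\theta_--1}$, and a remainder $m_*(r,x)$. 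The coefficient bounds from part (1) ensure that $P_{1,r}$ and $P_{2,r}$ sit in a fixed $\mathbf{P}_M$ uniformly in $r$, and that $m_*$ vanishes as $r\to 0^+$ thanks to the strict positivity of the exponent surplus.

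The main obstacle is the bookkeeping in (3): one must carefully track the powers of $r$ produced when pulling $Q_{i,r}(rx)$ inside the Laplacian, identify which terms actually reach the $(d/r)^{\theta_\pm-1}$ threshold, and verify that all lower-order residuals genuinely decay. The structural hypothesis $\beta\notin\N$ (and implicitly $\beta+\theta_+-\theta_-\notin\N$) is exactly what makes the geometric series in (1)--(2) convergent.
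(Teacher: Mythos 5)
Your proposal is correct and follows essentially the same route as the paper: dyadic comparison of the least-squares polynomials via Lemma \ref{lemma second polynomial scalinginv bounds} for parts (1)--(2) (then triangle inequality against the hypothesis at scale $rR$ for the growth bound), and the product-rule expansion based on Lemma \ref{lemma generalized distance} together with the identities $\theta_+(\theta_+-1)=\kappa$, $\theta_-(\theta_-+1)=2\theta_--\kappa$ and the coefficient smallness from part (1) for part (3). The only cosmetic difference is in part (1), where you telescope downward from a fixed $r_0$ and take a double limit, whereas the paper telescopes upward to a macroscopic scale comparable to $\rho$ and uses boundedness of the coefficients there; both are equivalent uses of the monotonicity of $\theta$.
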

\begin{proof}	
	By subtracting \eqref{eq asumption growth} at the values  $r$ and $2r$ and then applying the triangular inequality, we obtain	
	\begin{equation*}
		\Big\Vert (Q_{1,2r}-Q_{1,r}) d^{\theta_+}+  (Q_{2,2r}-Q_{2,r}) d^{\theta_-+1} \Big \Vert_{{L^{\infty}(B_r)}}\leq  C\theta(r) r^{\beta +{\gamma_1}}
	\end{equation*}
	
	Therefore, Lemma \ref{lemma second polynomial scalinginv bounds} implies
	\begin{equation}\label{eq dyadic ineq}
		\Vert Q_{1,2r}-Q_{1,r} \Vert_{L^{\infty}(B_{2r})}r^{\theta_+}+\Vert Q_{2,2r}-Q_{2,r} \Vert_{L^{\infty}(B_{2r})} r^{\theta_-+1}\leq C \theta(r)  r^{\beta-1 +\theta_+}
	\end{equation}
	
Let $i \in \{1,2\}$. In virtue of \eqref{eq dyadic ineq}, we deduce
	\begin{equation*}
		|	q^{(\mu)}_{1,r}-q^{(\mu)}_{1,2r}|\leq C\theta(r) r^{\beta-|\mu|-1},
	\end{equation*}
	and
		\begin{equation*}
		|	q^{(\mu)}_{2,r}-q^{(\mu)}_{2,2r}|\leq C\theta(r) r^{\beta-2+\theta_+-\theta_--|\mu|}.
	\end{equation*}
	
	By using the triangular inequality, it follows that for $i=1$,
	\begin{equation}\label{eq telescopic}
		|q^{(\mu)}_{1,r}-q^{(\mu)}_{1,2^Jr}|\leq \sum_{l=0}^{J-1} |q^{(\mu)}_{1,2^{l}r}-q^{(\mu)}_{1,2^{l+1}r}|\leq C \sum_{l=0}^{J-1} \theta(r2^{l})(r2^l)^{\beta-|\mu|-1},
	\end{equation}
	where $|\mu|\leq \lfloor \beta \rfloor -1$. Reasoning similarly, we obtain
	\begin{equation}\label{eq telescopic 2}
		|q^{(\mu)}_{2,r}-q^{(\mu)}_{2,2^Jr}|\leq C \sum_{l=0}^{J-1} \theta(r2^{l})(r2^l)^{\beta-2+\theta_+-\theta_--|\mu|},
	\end{equation}
	where $|\mu|\leq \lfloor \beta+\theta_+-\theta_-\rfloor -2$.
	
	Let us notice also that  $q^{(\mu)}_{i,s}$ is bounded for $s \in [\rho/4,\rho/2]$ and for $i=1,2$. Indeed, since $v\in L^\infty (B_\rho)$ and $\theta$ is bounded on $[s/4,s/2]$, we deduce from \eqref{eq asumption growth} and  Lemma \ref{lemma second polynomial scalinginv bounds} that	for  $s \in [\rho/4,\rho/2]$
		\begin{equation*}\label{eq bound large s}
			\Vert Q_{1,s} \Vert_{L^{\infty}(B_s)}s^{\theta_+}+\Vert Q_{2,s} \Vert_{L^{\infty}(B_s)} s^{\theta_-+1}\leq C_\rho s^{\beta -1  +\theta_+}+\Vert v\Vert_{L^\infty(B_\rho)}.
	\end{equation*}
From here, the boundedness claim readily follows.\\

	 On the other hand, if we take $J$ such that $2^J r \in [\frac{\rho}{4}, \rho/2)$, we can use the montonicity of $\theta$ combined with \eqref{eq telescopic} to obtain the estimate	
	\begin{eqnarray*}
		\frac{|q^{(\mu)}_{1,r}-q^{(\mu)}_{1,2^Jr}|}{\theta(r)} \leq  C \sum_{l=0}^{J-1} \frac{\theta(\rho2^{l-J-1})}{\theta(r)}(2^{l-J})^{\beta-|\mu|-1} \leq  C \sum_{l=1}^{J} \frac{\theta(\rho2^{-i-1})}{\theta(r)}(2^{-i})^{\beta-|\mu|-1}.
	\end{eqnarray*}
So, since, $|\mu| < {\beta-1}$ and \eqref{eq boundedness ply} holds, we deduce from the dominated convergence theorem that the right hand side of the latter inequality goes to zero as $r\to 0^+$. By applying an analogous reasoning to \eqref{eq telescopic 2}, we obtain \eqref{eq boundedness ply}.\\
	
	Focusing on \eqref{eq growthconclusion}, let us notice that the monotonicity of $\theta$ combined with \eqref{eq telescopic} and \eqref{eq telescopic 2} yields
		\begin{equation}\label{eq bound difference 1}
		|q^{(\mu)}_{1,r}-q^{(\mu)}_{1,2^Jr}|\leq C \theta(r) r^{\beta-|\mu|-1} \sum_{l=0}^{J-1} (2^l)^{\beta-|\mu|-1}\leq C \theta(r)(2^Jr)^{\beta-|\mu|- 1},
	\end{equation}
	and
	\begin{equation}\label{eq bound difference}
			|q^{(\mu)}_{2,r}-q^{(\mu)}_{2,2^Jr}|\leq C \theta(r)(2^Jr)^{\beta-2+\theta_+-\theta_--|\mu|},
	\end{equation}
respectively. \\

	It follows that for any $R>1$,		
	\begin{equation}\label{eq polydiff growth 1}
		\Vert (Q_{1,r }-Q_{1,R r })d^{\theta_+}\Vert_{L^\infty(B_{Rr})}+\Vert (Q_{2,r }-Q_{2,R r })d^{\theta_-+1}\Vert_{L^\infty(B_{Rr})}\\\leq C \theta(r) (Rr)^{\beta-1+{\theta_+}}.
	\end{equation}

	Hence, using the triangular inequality and combining it with \eqref{eq asumption growth} and \eqref{eq polydiff growth 1} 
	\begin{eqnarray*}
		r^{{\theta_+}+\beta-1}\theta(r)	\Vert v \Vert_{L^\infty(B_{rR})}
		&=&\Vert v-Q_{1,r}d^{\theta_+}-Q_{1,2,r}d^{\theta_-+1}\Vert_{L^\infty(B_{Rr})}\\
		&\leq& \Vert v - Q_{1,r R}d^{\theta_+}-Q_{2,rR}d^{\theta_-+1}\Vert_{L^\infty(B_{Rr})}\\
		&& + \Vert (Q_{1,r }-Q_{1,R r })d^{\theta_+}\Vert_{L^\infty(B_{Rr})}\\
		&&+  \Vert (Q_{2,r }-Q_{2,R r })d^{\theta_-+1}\Vert_{L^\infty(B_{Rr})}\\
		&\leq&  C\theta(Rr)(Rr)^{\beta-1+{\theta_+}}+C\theta(r) (Rr)^{\beta-1+{\theta_+}}.
	\end{eqnarray*}
	Therefore, by the monotonicity of $\theta$, \eqref{eq growthconclusion} follows.\\
	
	We conclude the proof of the Lemma showing \eqref{eq laplacian linear terms}. Let $f(x) = Q_{1,r}(x)d^{\theta_+}( x)+Q_{2,r}(x)d^{\theta_-+1}( x)$, computing directly we have that
	
	\begin{eqnarray}\notag
		\Delta f(x) &=& Q_{1,r}(x) \Delta (d^{\theta_+}(x))+Q_{2,r}(x)\Delta (d^{\theta_-+1}( x))\\\notag
		&&+2 \theta_+ d^{\theta_+-1}\nabla Q_{1,r}(x) \cdot \nabla d(x)+2(\theta_-+1)d^{\theta_-}\nabla Q_{2,r}(x)\cdot \nabla d(x)\\\label{eq lapaux}
		&&+ d^{\theta_+}(x) \Delta Q_{1,r}(x)+ d^{\theta_-+1}( x) \Delta Q_{2,r}(x)
	\end{eqnarray}
	
	 Since  $\Vert \Omega \cap B_{2}\Vert_{C^{\beta}}\leq 1$, Lemma \ref{lemma generalized distance} provides the expansion 
	\begin{equation}\label{eq aprox distlemma}
		d(x)= P(x)+g(x) = \sum_{|\lambda|\leq \lfloor \beta\rfloor} p^{(\lambda)}x^\lambda +g(x),
	\end{equation} 
	where $P \in \mathbf{P}_{\lfloor \beta \rfloor}$ with $\Vert P\Vert_{L^\infty(\Omega \cap B_1)} \leq C_\beta$, and where $g \in C^{\infty}(\Omega \cap B_2)$ satisfies $|\nabla g| \leq C_\beta|x|^{\beta-1}$. Therefore, 
	\begin{eqnarray}\notag
	\frac{r^2 d^{\theta_+-1}}{\theta(r) r^{\theta_+ +\beta-1}} \nabla Q_{1,r}(rx) \cdot \nabla d(rx) &=& \frac{r^2 d^{\theta_+-1}}{\theta(r) r^{\theta_+ +\beta-1}}\left(\nabla Q_{1,r}(rx)\cdot \nabla P(rx) + \nabla Q_{1,r}(x)\cdot \nabla g(rx)\right)\\\label{eq aux1}
	&&+ R_{1,r}(x)\Big(\frac{d(rx)}{r}\Big)^{\theta_+-1}+ m_1(x,r)
\end{eqnarray}
where  $R_{1,r} \in \mathbf{P}_{2\lfloor \beta \rfloor-3}$ and $|m_1(x,r)|\leq Cm(r)$ with $m$ only depending on $\beta$ and $\kappa$ and such that $m(r)\to 0$ as $r\to 0^+$. Analogously,
\begin{eqnarray}\notag
	\frac{r^2 d^{\theta_-}}{\theta(r) r^{\theta_+ +\beta-1}} \nabla Q_{2,r}(rx) \cdot \nabla d(rx) &=& \frac{r^2 d^{\theta_-}}{\theta(r) r^{\theta_+ +\beta-1}}\left(\nabla Q_{2,r}(rx)\cdot \nabla P(rx) + \nabla Q_{2,r}(x)\cdot \nabla g(rx)\right)\\\label{eq aux2}
	&&+ R_{1,r}(x)\Big(\frac{d(rx)}{r}\Big)^{\theta_-}+ m_2(x,r)
\end{eqnarray}
where  $R_{2,r} \in \mathbf{P}_{\lfloor 2\beta+\theta_+-\theta_- \rfloor-4}$ and $|m_2(x,r)|\leq Cm(r)$.

In a similar vein, using \eqref{eq aprox distlemma}, we deduce that 
\begin{eqnarray}\notag
	\frac{r^2}{\theta(r) r^{\theta_+ +\beta-1}}\Big( d^{\theta_+}(rx) \Delta Q_{1,r}(r x)+ d^{\theta_-+1}(r x) \Delta Q_{2,r}(r x)\Big) = \\\label{eq aux3}
	m_3(x,r) + R_{3,r}(x) \Big(\frac{d(rx)}{r}\Big)^{\theta_+-1}+R_{4,r}(x) \Big(\frac{d(rx)}{r}\Big)^{\theta_-},
	\end{eqnarray}
	with $R_{i,r} \in \mathbf{P}_{M}$ for $i=3,4$ for some $M \in \N$ only depending on $\beta$ and $\kappa$ and where $|m_3(x,r)|\leq Cm(r)$.

	On the other hand, thanks to Lemma \ref{lemma generalized distance}, we also have that
	\begin{equation}\label{eq almost sol large beta 0}
	\Delta d^{\theta_+}(x) = \frac{\kappa}{d^2(x)} d^{\theta_+} + d^{\theta_+-1} \Big(P_0(x)+g_2(x)\Big),
\end{equation}
with $P_0 \in \mathbf{P}_{M}$ for some $M$ depending on $\beta$ and $|g_2(x)| \leq C|x|^{\beta-1}$. Combining \eqref{eq lapaux}, \eqref{eq aux1}, \eqref{eq aux2}, \eqref{eq aux3}, \eqref{eq almost sol large beta 0}, and \eqref{eq lap distpower} applied to $d^{\theta_-+1}$, we deduce
\begin{eqnarray*}
 	\frac{r^2}{\theta(r) r^{\theta_+ +\beta-1}}\Big(	\Delta f(rx) -\frac{\kappa}{d(rx)^2} f(rx)\Big)  =P_{1,r}(x) \Big(\frac{d(rx)}{r}\Big)^{\theta_+-1}+P_{2,r}(x) \Big(\frac{d(rx)}{r}\Big)^{\theta_-} + m_*(r,x)
\end{eqnarray*}
with $P_{i,r} \in \mathbf{P}_{M}$ for $i=1,2$ for some $M \in \N$ only depending on $\beta$ and $\kappa$ and where $|m_*(x,r)|\leq Cm(r)$. This proves \eqref{eq laplacian linear terms}.
\end{proof}

\bibliographystyle{abbrv}

\end{document}